\newtheorem{thm}{Theorem}[section]
\newtheorem*{thm*}{Theorem}
\newtheorem{prop}[thm]{Proposition}
\newtheorem*{prop*}{Proposition}
\newtheorem{corol}[thm]{Corollary}
\newtheorem*{corol*}{Corollary}
\theoremstyle{definition}
\newtheorem{lemma}[thm]{Lemma}
\newtheorem{dfn}[thm]{Definition}
\newtheorem*{dfn*}{Definition}
\newtheorem{rmk}[thm]{Remark}
\DeclareMathOperator{\Fix}{Fix}
\DeclareMathOperator{\hilb}{Hilb}
\DeclareMathOperator{\Bl}{Bl}
\DeclareMathOperator{\Pic}{Pic}
\DeclareMathOperator{\Aut}{Aut}
\DeclareMathOperator{\NS}{NS}
\DeclareMathOperator{\Sing}{Sing}
\DeclareMathOperator{\id}{id}
\DeclareMathOperator{\reg}{reg}
\DeclareMathOperator{\Pn}{\mathbb{P}}
\DeclareMathOperator{\Gr}{Gr}
\DeclareMathOperator{\Sym}{Sym}
\newcommand{\Hilb}[2]{#2^{[#1]}}
\newcommand{\Blow}[2]{\Bl_{#1}#2}
\title{The Fano variety of lines on singular cyclic cubic fourfolds}
\author{Samuel Boissi\`ere}
\address{Samuel Boissi\`ere,
	Laboratoire de Math\'ematiques et Applications,
	UMR 7348 du CNRS,
	B\^atiment H3,
	Boulevard Marie et Pierre Curie,
	Site du Futuroscope,
	TSA 61125,
	86073 Poitiers Cedex 9,
	France}
\email{samuel.boissiere@univ-poitiers.fr}
\urladdr{http://www-math.sp2mi.univ-poitiers.fr/$\sim$sboissie/}
\author{Paola Comparin}
\address{Paola Comparin, 
Departamento de Matem\'atica y Estad\'istica, 
Universidad de la Frontera, 
Av. Francisco Salazar 1145, Temuco, Chile}
\email{paola.comparin@ufrontera.cl}
\urladdr{https://sites.google.com/site/pcompa/}
\author{Lucas Li Bassi}
\address{Lucas Li Bassi, 	Laboratoire de Math\'ematiques et Applications,
	UMR 7348 du CNRS,
	B\^atiment H3,
	Boulevard Marie et Pierre Curie,
	Site du Futuroscope,
	TSA 61125,
	86073 Poitiers Cedex 9,
	France}
\email{lucas.li.bassi@univ-poitiers.fr}
\urladdr{http://https://sites.google.com/view/lucaslibassi/}
\date{\today}
\begin{document}
	\maketitle
	\begin{abstract}
		We study the symplectic resolution of the Fano variety of lines on some singular cyclic cubic fourfolds, i.e. cubic fourfolds arising as cyclic 3:1 cover of $\mathbb{P}^4$ branched along a cubic threefold. In particular we are interested in the geometry of these varieties in the case of cyclic cubic fourfolds branched along a cubic threefold having one isolated singularity of type $A_i$ for $i=2,3,4$. On these symplectic resolutions we find a non-symplectic automorphism of order three induced by the covering automorphism. 
	\end{abstract}
	
	\section{Introduction}
	In the context of complex projective geometry, cubic hypersurfaces have been deeply studied by the mathematical community for many reasons, one of them being their rich geometry. There has been a growing interest over the last fitfty years in one class in particular: cubic fourfolds. One of the reasons why cubic fourfolds are particularly interesting resides in their Hodge structure. Indeed, they are the archetypal example of Fano varieties of $K3$ type (see \cite{fatighenti2022topics} for a survey on the subject). Because of this fact they are deeply related to the world of irreducible holomorphic symplectic (from now on IHS) manifolds. \\
	
	The first example of this relation is via the construction of the Fano variety of lines on a cubic fourfold, which is an IHS manifold of $K3^{[2]}$-type by the well known result of Beauville--Donagie ~\cite{BD}. This construction was crucial in order to establish an isomorphism between the moduli space $\mathcal{C}_3^{sm}$ of smooth cubic threefolds and the moduli space $\mathcal{N}_{\langle 6 \rangle}^{\rho, \xi}$ of fourfolds deformation equivalent to the Hilbert square of a $K3$ surface endowed with a special non-symplectic automorphism of order three as shown by Boissi\`ere--Camere--Sarti~\cite[Theorem 1.1]{BCS}. In \textit{loc. cit.} the authors use the \emph{cyclic} fourfold, i.e. the 3:1 cyclic cover of $\mathbb{P}^4$ branched on a cubic threefold, and the Fano variety of lines on it to show this isomorphism. This provides an interpretation in the framework of IHS manifolds of the results of Allcock--Carlson--Toledo~\cite{ACT}. 
	In the latter, the authors consider also the GIT compactification of $\mathcal{C}_3^{sm}$ providing an extension of the period map also for GIT stable nodal cubic threefolds, i.e. cubic threefolds having only isolated singularities of type $A_k$ for $k=1, \dots, 4$. Also these cases have been given an IHS interpretation in the framework of the construction of moduli spaces, period maps and period domains of nonsymplectic automorphisms on IHS manifolds deformation equivalent to the Hilbert square of a K3 surface, in \cite{BCS} for $i=1$ and \cite{libassi2023git} for $i=2,3,4$.\\
	
	The meaning of the degeneracy of the automorphism is that when the period point goes to the closure of the period domain, the automorphism of the family jumps to another family with a bigger invariant lattice. In view of this, in \cite{boissiere2023fano} the authors studied in detail the geometry of the Fano variety of lines of a cuspidal cyclic cubic fourfold, i.e. a cyclic cubic fourfold having a cubic threefold with one isolated $A_1$ singularity as branch locus.\\
	
	The aim of this paper is to supplement their results studying the Fano variety of lines of a cyclic cubic fourfold having as branch locus a cubic threefold with one isolated $A_i$ singularity with $i=2,3,4$. Therefore, the main theorem of this paper is the following.
	\begin{thm}\label{thm: main SamuelPaola}
		Let $C_i$ be a complex projective cubic threefold having one isolated singularity of type $A_i$ for $i=2, 3, 4$ and let $Y_i$ be its associated cyclic cubic fourfold. Assume that there exist no plane $\Pi\subset Y_i$ such that $\Pi\cap\Sing(Y_i)\neq\emptyset$. Then the Fano variety of lines $F(Y_i)$ of~$Y_i$ admits a unique symplectic resolution by an IHS manifold of $K3^{[2]}$-type $\widetilde{F(Y_i)}$. \\
		Moreover, there exist integral lattices $R_i$ and $T_i$ such that:\begin{enumerate}[i)]
			\item $\Pic\left(\widetilde{F(Y_i)}\right)\simeq R_i$;
			\item there exists a non-symplectic automorphism $\tau\in\Aut\left(\widetilde{F(Y_i)}\right)$ whose invariant sublattice is $H^2(\widetilde{F(Y_i)}, \mathbb{Z})^{\tau^*}\simeq T_i$;
		\end{enumerate}
		with $T_i$ and $R_i$ defined in the following table:
		\begin{center}
			\begin{tabular}{|c|c|c|} 
				\hline
				$i$ & $T_i$ & $R_i$\\
				\hline
				$2$ &  $\langle 6\rangle\oplus A_2$ & $\langle 6\rangle\oplus D_4$\\
				\hline
				$3$ &  $\langle 6\rangle\oplus E_6$ & $\langle 6\rangle\oplus E_6$\\ 
				\hline
				$4$ & $\langle 6\rangle\oplus E_8$ & $\langle 6\rangle\oplus E_8$\\ 
				\hline
			\end{tabular}
		\end{center}
	\end{thm}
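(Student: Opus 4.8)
The plan is to localize the problem at the unique singular point of $Y_i$, identify its ADE type, transfer that type to $F(Y_i)$, and then reduce assertions (i) and (ii) to a lattice computation controlled by the covering automorphism. First I would write $C_i=\{f=0\}\subset\mathbb{P}^4$ and realize the cover as $Y_i=\{x_5^3+f=0\}\subset\mathbb{P}^5$, with covering automorphism $\sigma\colon x_5\mapsto\zeta x_5$ for $\zeta$ a primitive cube root of unity. In suitable local analytic coordinates at the $A_i$ point one has $f=x_1^2+x_2^2+x_3^2+x_4^{i+1}$, so $Y_i$ carries a single isolated hypersurface singularity with equation $x_5^3+x_4^{i+1}+x_1^2+x_2^2+x_3^2=0$. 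Splitting off the nondegenerate rank-three quadratic part, this is stably equivalent to the plane-curve singularity $x_5^3+x_4^{i+1}$, of Milnor number $2i$, and hence of ADE type $D_4,E_6,E_8$ for $i=2,3,4$ respectively (the classical identifications $x^3+y^3\sim D_4$, $x^3+y^4\sim E_6$, $x^3+y^5\sim E_8$). The hypothesis that no plane of $Y_i$ meets $\Sing(Y_i)$ is precisely what prevents a plane through the singular point from contributing a two-dimensional family of lines, keeping the singularity simple.

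Next I would transfer this to $F(Y_i)$ by projection from the singular point $p$: the lines of $Y_i$ through $p$ should form the singular locus of $F(Y_i)$, a surface along which $F(Y_i)$ has a transversal ADE singularity of the same type. Resolving it yields a crepant, hence symplectic, resolution $\widetilde{F(Y_i)}\to F(Y_i)$ whose exceptional locus is a tree of divisors ($\mathbb{P}^1$-bundles over that surface) with dual graph the corresponding Dynkin diagram; uniqueness should follow from Namikawa--Fu theory, since a divisorial exceptional locus leaves no room for Mukai flops and the movable cone has a single chamber. To pin down the deformation type I would smooth $C_i$ to a smooth cubic threefold $C_t$, so that $F(Y_t)$ is of $K3^{[2]}$-type by Beauville--Donagi \cite{BD} and carries the non-symplectic order-three automorphism of \cite{BCS}; a simultaneous resolution of the ADE family after a finite base change (the Brieskorn--Tyurina phenomenon) resolves $F$ fibrewise, with general fibre $F(Y_t)$ and central fibre $\widetilde{F(Y_i)}$, exhibiting the latter as a smooth deformation of $F(Y_t)$ and therefore as an IHS manifold of $K3^{[2]}$-type.

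For the lattices, the pulled-back Pl\"ucker class $g$ satisfies $q(g)=6$, is orthogonal to the exceptional divisors, and together with them generates $\Pic(\widetilde{F(Y_i)})$; the exceptional divisors span the negative definite root lattice of the type found above, and for general $C_i$ no further algebraic class appears (surjectivity and maximal variation of the period maps in \cite{BCS,libassi2023git}), giving $R_i\simeq\langle6\rangle\oplus D_4$, $\langle6\rangle\oplus E_6$, $\langle6\rangle\oplus E_8$. The automorphism $\tau$ induced by $\sigma$ is non-symplectic of order three (it acts by $\zeta$ on $H^{2,0}$, inherited from the smooth case) and fixes $g$, so on the exceptional divisors it acts by a Dynkin-diagram automorphism of order dividing three. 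Since $E_6$ and $E_8$ admit no nontrivial such symmetry, $\tau$ fixes every exceptional divisor for $i=3,4$ and $T_i=R_i$; for $i=2$ the factorization $x_5^3+x_4^3=\prod_{j=0}^2(x_5+\zeta^j x_4)$ shows that $\sigma$ cyclically permutes the three branches, hence the three outer $(-2)$-divisors of the $D_4$ configuration (triality), fixing the central one. As the triality-invariant sublattice of $D_4$ is the even positive definite rank-two lattice of discriminant three, namely $A_2$, this yields $T_2\simeq\langle6\rangle\oplus A_2$ against $R_2\simeq\langle6\rangle\oplus D_4$.

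The main obstacle will be the second step: rigorously deducing the transversal ADE structure of $\Sing(F(Y_i))$ from the singularity of the fourfold $Y_i$, and establishing existence, crepancy and uniqueness of the symplectic resolution, together with the fibrewise simultaneous resolution needed to run the deformation argument. A secondary but decisive point, specific to $i=2$, is verifying that the induced action on the $D_4$ configuration is genuinely the triality and not the identity, since this is exactly what separates the invariant lattice $T_2$ from the full Picard lattice $R_2$.
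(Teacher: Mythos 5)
Your identification of the local types of $Y_i$ at $p$ ($D_4,E_6,E_8$ via $x_5^3+x_4^{i+1}$), the final lattices, and the triality mechanism for $i=2$ all agree with the paper, but the core geometric steps have genuine gaps. The decisive one is your description of $\Sing(F(Y_i))$ and of its resolution. The singular locus is the surface $\Sigma\simeq F(Y_i,p)$ of lines through $p$, i.e.\ the $(2,3)$-complete intersection $\{f_2=0\}\cap\{f_3+x_5^3=0\}$, and this surface is itself \emph{singular}: it has three $A_1$ points for $i=2$, an $A_5$ point for $i=3$, and an $E_7$ point for $i=4$. The transversal $D_4/E_6/E_8$ structure you assert holds only along $\Sigma\setminus\Sing(\Sigma)$ (this is \Cref{prop:descrizione locale}); over $\Sing(\Sigma)$ the singularity of $F(Y_i)$ lies in a deeper symplectic leaf, the fibers of any symplectic resolution jump to dimension two, and the exceptional locus is \emph{not} a tree of $\mathbb{P}^1$-bundles over $\Sigma$. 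Consequently everything you hang on that picture is unsupported: the count and configuration of irreducible exceptional divisors (hence $R_i$ — you would also need to rule out monodromy of the fiberwise Dynkin configuration identifying components), the uniqueness claim that ``a divisorial exceptional locus leaves no room for Mukai flops'' (flops occur precisely over the $0$-dimensional strata; the paper must check that the two-dimensional central fibers contain no $\mathbb{P}^2$, e.g.\ they are $\mathbb{P}^1\times\mathbb{P}^1$ for $i=2$ in \Cref{prop: unicity A2}, and cites \cite{yamagishi2018symplectic} for $i=3,4$), and the diagram-automorphism argument for $T_i$. This deeper stratum is exactly where the paper works: it proves $\Bl_{\Sigma}F(Y_i)\simeq\hilb^2(\Sigma)$ (\Cref{prop:blowup}) and then resolves $\hilb^2(\Sigma)$ by Yamagishi's explicit sequence of blow-ups, reading off the exceptional configuration from that analysis (\Cref{prop: induction of automorphism}, \Cref{prop: sing A2}).

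The second gap is the deformation step. Brieskorn--Tyurina simultaneous resolution is a theorem about families of isolated surface ADE singularities; you invoke it for a family of fourfolds whose special fiber has a two-dimensional singular locus with transversal type degenerating at finitely many points, and no such off-the-shelf statement exists — establishing that $\widetilde{F(Y_i)}$ is of $K3^{[2]}$-type is precisely what needs proof. The paper sidesteps this entirely: both $\widetilde{F(Y_i)}$ and $\hilb^2(\widetilde{\Sigma})$ (Hilbert square of the minimal K3 resolution of $\Sigma$, which exists by \Cref{Lehn}) are symplectic resolutions of $\Sym^2(\Sigma)$, hence differ by a sequence of Mukai flops by \cite{wierzba2002small}; this gives the deformation type and the period at once, and lets the paper import the Picard and invariant-lattice computations for $\hilb^2(\widetilde{\Sigma})$ with its natural order-three automorphism from \cite{libassi2023git}. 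Finally, for $i=2$ your triality needs a carrier acting on actual divisors: in the paper it is the cyclic permutation of the three $A_1$ points $q_0,q_1,q_2$ of $\Sigma$ by the covering automorphism, which permutes the three corresponding exceptional divisors of the resolution; the factorization $x_5^3+x_4^3=\prod_j(x_5+\zeta^jx_4)$ at $p$ is a suggestive heuristic but, by itself, acts on nothing in $\widetilde{F(Y_2)}$.
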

	\section*{Acknowledgements}
	We would like to warmly thank Chiara Camere, Enrico Fatighenti, Michele Graffeo, Christian Lehn, Giovanni Mongardi, Alessandra Sarti and Ryo Yamagishi for the helpful insights given at various points of the redaction of this paper. The authors have been partially funded by Programa de cooperación científica ECOS-ANID C19E06 and by the MATH AmSud projet ALGEO 2020-2022.  
	The second author has been partially funded by Universidad de La Frontera, Proyecto DIM23-0001 and the third author was partially funded by the grant ``VINCI 2021-C2-104'' issued by the ``Université Franco Italienne''. 
	\section{Basic facts about symplectic varieties}
	In this section we recall some basic facts about symplectic varieties.\newline
	Let $X$ be a normal complex projective variety and $X^{\reg}$ its regular part. The sheaf $\Omega^{[p]}_X$ of \emph{reflexive holomorphic p-forms} on $X$ is defined as $\iota_*\Omega^{p}_{X^{\reg}}$, where we denote by $\iota$ the inclusion of the regular part $X^{\reg}\subset X$.
	Then a symplectic form on $X$ is a closed reflexive 2-form $\omega$, i.e. a global section of $\Omega^{[2]}_X$, on $X$ which is non-degenerate at each point of $X^{\reg}$.
	\begin{dfn}[\protect{\cite[Definition 1.1]{Beauville_2000}}]
		Assume that a normal projective variety $X$ admits a symplectic form $\omega$. Then $X$ has \emph{symplectic singularities} if for one (hence for every) resolution $f:\widetilde{X}\to X$ of the singularities (i.e. a birational proper map from a smooth variety) of X, the pullback $f^*\omega_{\reg}$ of the holomorphic symplectic form $\omega_{\reg}=\omega|_{X^{\reg}}$ extends to a holomorphic 2-form on $\widetilde{X}$. In this case $X$ is called \emph{symplectic variety}.
	\end{dfn}
	From now on we denote by $X$ a symplectic variety, $\omega$ a symplectic form on it and $\pi:\widetilde{X}\to X$ a resolution of singularities. Then the regular 2-form $\pi^{[*]}\omega$ (see the discussion in \cite{Kebekuspullback} for the definition of pullback of reflexive forms), in general, is degenerate. Therefore we give the following definition.
	\begin{dfn}
		A resolution of singularities $\pi:\widetilde{X}\to X$ is said \emph{symplectic} if $\pi^{[*]}\omega$ is non-degenerate.
	\end{dfn}
	Recall that a birational map $\pi:Y\to X$ between normal irreducible algebraic varieties with canonical bundles $K_X$ and $K_Y$ is called $\emph{crepant}$ if the canonical map $$\pi^*K_X\to K_{Y}$$ defined over the non-singular locus extends to an isomorphism over the whole manifold $Y$. 
	\begin{prop}
		Let $\pi:\widetilde{X}\to X$ be a resolution of singularities.
		The following are equivalent:
		\begin{itemize}
			\item $\pi$ is crepant;
			\item $\pi$ is symplectic;
			\item $K_{\widetilde{X}}$ is trivial;
			\item for every symplectic form $\omega'$ on $X^{\reg}$, its pull-back $\pi^{[*]}\omega'$ extends to a symplectic form on $\widetilde{X}$.
		\end{itemize}
	\end{prop}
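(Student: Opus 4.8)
The plan is to reduce all four conditions to a single numerical invariant, the discrepancies of $\pi$, using the symplectic form to trivialize the canonical bundle. Write $\dim X = 2n$ and let $\tilde\omega := \pi^{[*]}\omega$ be the holomorphic $2$-form on $\widetilde{X}$ whose existence is guaranteed by $X$ being a symplectic variety. First I would record two structural inputs: since $\omega^{\wedge n}$ is a nowhere-vanishing section of $K_{X^{\reg}}$ and $X$ is normal, reflexivity gives $K_X \cong \mathcal{O}_X$; and since symplectic singularities are canonical Gorenstein (Beauville), $K_X$ is Cartier and the discrepancies in
\[
K_{\widetilde{X}} \;=\; \pi^* K_X + \sum_i a_i E_i \;=\; \mathcal{O}_{\widetilde{X}}\Bigl(\sum_i a_i E_i\Bigr)
\]
satisfy $a_i \ge 0$, where the $E_i$ are the prime $\pi$-exceptional divisors.

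The key geometric observation I would establish is that the vanishing divisor of the section $\tilde\omega^{\wedge n}$ of $K_{\widetilde{X}}$ is exactly $\sum_i a_i E_i$. Indeed $\tilde\omega^{\wedge n}$ agrees with $\pi^*(\omega^{\wedge n})$ on the isomorphism locus, and $\omega^{\wedge n}$ is a global generator of $K_X$; by the very definition of discrepancy, the pullback of a generator of $K_X$ vanishes to order $a_i$ along $E_i$. Consequently $\tilde\omega$ is degenerate precisely along $\bigcup_{a_i>0} E_i$, since $\tilde\omega$ is non-degenerate at a point iff $\tilde\omega^{\wedge n}$ is nonzero there. This single identity links the analytic condition (non-degeneracy of $\tilde\omega$) to the birational one (vanishing of discrepancies).

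With this in hand the equivalences follow. Symplectic $\iff$ crepant: $\pi$ is symplectic iff $\tilde\omega$ is non-degenerate everywhere, iff $\sum_i a_i E_i = 0$, iff all $a_i = 0$, which is crepancy. Crepant $\iff$ $K_{\widetilde{X}}$ trivial: if all $a_i = 0$ then $K_{\widetilde{X}} = \pi^* K_X \cong \mathcal{O}_{\widetilde{X}}$; conversely if $K_{\widetilde{X}}$ is trivial then $\mathcal{O}_{\widetilde{X}}(\sum_i a_i E_i) \cong \mathcal{O}_{\widetilde{X}}$ with $\sum_i a_i E_i$ effective and $\pi$-exceptional, and I would invoke the standard fact (negativity lemma, together with $\pi_*\mathcal{O}_{\widetilde{X}} = \mathcal{O}_X$) that an effective exceptional divisor linearly equivalent to zero must vanish, giving crepancy. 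Finally the fourth condition: its specialization to $\omega' = \omega$ recovers symplecticity, so it implies the others; for the converse, any symplectic form $\omega'$ on $X^{\reg}$ is reflexive, so $\pi^{[*]}\omega'$ extends holomorphically by the extension theorem for reflexive forms on canonical singularities (Kebekus--Schnell, Greb--Kebekus--Kov\'acs--Peternell), and $(\omega')^{\wedge n}$ again generates $K_X$, so the same discrepancy computation shows $\pi^{[*]}\omega'$ is non-degenerate exactly when all $a_i = 0$; thus crepancy forces every such pullback to be a symplectic form.

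I expect the main obstacle to be the careful justification of the central identity $\operatorname{div}(\tilde\omega^{\wedge n}) = \sum_i a_i E_i$, that is, that the holomorphic extension $\tilde\omega$ provided by the symplectic-singularities hypothesis has top wedge vanishing to \emph{exactly} the discrepancy order rather than merely to order at least $a_i$, together with the clean invocation of the reflexive-form extension theorem needed to handle the universally quantified fourth statement.
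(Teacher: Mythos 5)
Your proof is correct and follows essentially the same route as the paper's source: the paper gives no argument of its own, deferring to \cite[Proposition 1.6]{fu2005survey}, and the proof there is precisely your reduction --- trivialize $K_X$ by $\omega^{\wedge n}$, identify $\operatorname{div}\bigl(\tilde\omega^{\wedge n}\bigr)$ with the discrepancy divisor $\sum_i a_i E_i$, and read off all four conditions as the vanishing of every $a_i$. The ``exactness'' worry you flag is not a genuine gap: $\tilde\omega^{\wedge n}$ and the pullback of a generator of $K_X$ agree on the dense open locus where $\pi$ is an isomorphism, hence coincide as sections of $K_{\widetilde{X}}$, and the divisor of the latter is exactly $\sum_i a_i E_i$ by the very definition of discrepancy.
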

	\begin{proof}
		See \cite[Proposition 1.6]{fu2005survey}.
	\end{proof}
	
	Finally, Kaledin showed in \cite{kaledin2006symplectic} that there exists a canonical stratification of a symplectic variety $X$.
	\begin{thm}
		Let $X$ be a symplectic variety. Then there exists a canonical stratification of closed subvarieties $X=X_0\supset X_1\supset \dots$ such that:
		\begin{itemize}
			\item $X_{i+1}$ is the singular locus of $X_i$;
			\item the normalization of every irreducible component of $X_i$ is a symplectic variety.
		\end{itemize}
	\end{thm}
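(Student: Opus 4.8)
The plan is to treat the three cases $i=2,3,4$ in parallel, combining an explicit local study of the singularities with a global deformation- and lattice-theoretic argument, and to keep track throughout of the order-three covering automorphism $\sigma\colon Y_i\to Y_i$, $w\mapsto \zeta_3 w$, where $Y_i=\{w^3=f_i\}\subset\mathbb{P}^5$ and $C_i=\{f_i=0\}\subset\mathbb{P}^4$. First I would pin down the singularities: from the local normal form of the $A_i$ singularity of $C_i$ I would read off the singularity of $Y_i$ at the point $p$ lying over the singular point of $C_i$, and then determine $\Sing\bigl(F(Y_i)\bigr)$, whose points correspond to the lines of $Y_i$ through $p$. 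The hypothesis that no plane $\Pi\subset Y_i$ meets $\Sing(Y_i)$ guarantees that this locus has the expected (low) dimension and avoids the positive-dimensional families of lines a plane would produce, so that $F(Y_i)$ has symplectic singularities to which the canonical Kaledin stratification recalled above applies. This step fixes the geometry on which everything else rests.

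Next I would establish existence, deformation type and uniqueness of the resolution. For existence I would resolve the singular strata and verify crepancy through the Proposition above, so that $K_{\widetilde{F(Y_i)}}$ is trivial and the pulled-back symplectic form stays non-degenerate. To identify the deformation type I would place $F(Y_i)$ inside the relative Fano family $\mathcal{F}\to B$ over the space of cubic threefolds, whose general fibre is the smooth Fano variety of a smooth cyclic cubic fourfold, an IHS manifold of $K3^{[2]}$-type by Beauville--Donagi, and use a simultaneous resolution over a finite base change to realize $\widetilde{F(Y_i)}$ as a smooth deformation of such a fibre; deformation invariance then yields that $\widetilde{F(Y_i)}$ is IHS of $K3^{[2]}$-type. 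Uniqueness I would obtain by showing that $\widetilde{F(Y_i)}$ admits no Mukai flop, i.e. that the contracted exceptional locus leaves no room for a second, non-isomorphic symplectic model; here the no-plane hypothesis is used sharply to rigidify the exceptional geometry.

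For the lattices I would identify $H^2\bigl(\widetilde{F(Y_i)},\mathbb{Z}\bigr)$ with the standard $K3^{[2]}$ lattice and locate two families of classes: the Plücker polarization, of square $6$, contributing the summand $\langle 6\rangle$, and the exceptional divisors of the resolution, whose dual graph is the ADE diagram $D_4,E_6,E_8$ attached to $F(Y_i)$ for $i=2,3,4$; checking that these classes saturate the Picard group yields $R_i$. The automorphism $\tau$ induced by $\sigma$ is non-symplectic, since $\sigma$ acts on the symplectic form by $\zeta_3$, so the invariant lattice is of type $(1,1)$, hence contained in $\Pic=R_i$, and equals $R_i^{\tau^*}$. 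As the polarization is fixed it contributes $\langle 6\rangle$ again, while on the root part $\tau^*$ permutes the exceptional curves according to the covering symmetry. For $i=2$ this is the triality of $D_4$, whose fixed sublattice is $A_2$, giving $T_2=\langle 6\rangle\oplus A_2$ although $R_2=\langle 6\rangle\oplus D_4$; for $i=3,4$ the action on $E_6,E_8$ is trivial and $T_i=R_i$. Compatibility with the smooth case of Boissière--Camere--Sarti (invariant lattice $\langle 6\rangle$) and with the $A_1$ case of \cite{boissiere2023fano} under specialization provides an independent check.

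The main obstacle I expect to be the precise local identification of the singularities of $F(Y_i)$ together with the verification that the resolution is symplectic and unique: translating the $A_i$ singularity of the branch threefold into the correct transverse configuration on $F(Y_i)$, with the jump $A_2\rightsquigarrow D_4$ reflecting that the symmetric resolution and the covering symmetry interact nontrivially, is delicate, and controlling the global geometry well enough to exclude competing symplectic models forces the no-plane hypothesis to be used in full strength.
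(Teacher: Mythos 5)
Your proposal does not prove the statement at hand. The statement is Kaledin's stratification theorem: for an \emph{arbitrary} symplectic variety $X$ there is a canonical chain of closed subvarieties $X=X_0\supset X_1\supset\dots$ with $X_{i+1}=\Sing(X_i)$, such that the normalization of every irreducible component of each $X_i$ is again a symplectic variety. What you have written is instead a proof plan for the paper's main theorem on Fano varieties of lines of cyclic cubic fourfolds branched over an $A_i$-singular cubic threefold: the ``cases $i=2,3,4$'', the covering automorphism $\sigma$, the no-plane hypothesis, and the lattices $\langle 6\rangle\oplus D_4$, $\langle 6\rangle\oplus E_6$, $\langle 6\rangle\oplus E_8$ have no counterpart in the statement you were asked to prove, where the index $i$ merely enumerates strata of an abstract $X$. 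Nothing in your text constructs the stratification, shows that it is canonical and terminates, or addresses the essential point that the normalizations of the irreducible components of the strata inherit symplectic structures.

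For the record, the paper itself does not prove this theorem: it is quoted as a known result, with proof by reference to \cite[Theorem 2.3]{kaledin2006symplectic}. An actual proof runs along completely different lines from anything in your sketch. The chain $X_{i+1}=\Sing(X_i)$ is well defined and terminates because the singular locus of a reduced variety is a proper closed subvariety, hence of strictly smaller dimension; the substantive content --- that each irreducible component of $X_i$ has symplectic normalization --- is established by Poisson-geometric methods: the symplectic form on $X^{\reg}$ induces a Poisson bracket on all of $X$, the singular locus with its reduced structure is a Poisson subvariety, the bracket lifts to the normalization of each component, and Kaledin shows that the resulting generically non-degenerate $2$-form again yields symplectic singularities. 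Had your target been the paper's main theorem instead, your outline would be broadly aligned with the paper's strategy in its lattice-theoretic part, but it would still diverge on key steps: the paper identifies the transversal singularity types ($A_2\rightsquigarrow D_4$, $A_3\rightsquigarrow E_6$, $A_4\rightsquigarrow E_8$) by explicit local equations, and obtains uniqueness of the symplectic resolution via Wierzba--Wi\'sniewski and Yamagishi rather than via a simultaneous resolution over a deformation family, which the paper never invokes.
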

	\begin{proof}
		See \cite[Theorem 2.3]{kaledin2006symplectic}
	\end{proof}
	We call \emph{leaf} each stratum of this decomposition. In particular each irreducible component of a leaf has even dimension.
	\section{Irreducible holomorphic symplectic manifolds}
	
	In this section we recall briefly a few facts about irreducible holomorphic symplectic manifolds. For more details see \cite{Huybrechts2003CompactHM} and \cite{debarreHKbibbia}. For any integer lattice $L$ we will denote by $L_{\mathbb{C}}:=L\otimes\mathbb{C}$ its $\mathbb{C}$-linear extension. \newline\newline
	Among the symplectic varieties it is surely notable the smooth case. Indeed, a well studied subject of modern mathematics is the geometry of \emph{irreducible holomorphic symplectic manifolds}.
	\begin{dfn}
		An irreducible holomorphic symplectic (from now on IHS) manifold $X$ is a compact complex Kähler manifold which is simply connected and such that there exists an everywhere non-degenerate holomorphic 2-form $\omega_X$ such that $H^0(X, \Omega_X^2)=\mathbb{C}\omega_X$.
	\end{dfn}
	The theory of IHS manifolds is a well studied branch of mathematics. The deformation types of IHS manifolds known at the state of art are those of the Hilbert scheme over $n$ points on a $K3$ surface and of the generalized Kummer manifolds, both due to Beauville \cite{Beauville1983} except for the case of the Hilbert square over a $K3$ surface which is due to Fujiki \cite{Fujiki}, plus other two examples constructed by O'Grady in dimension 6 \cite{OGrady6} and 10 \cite{OGrady10}.\\
	A very important feature of IHS manifolds is that the second cohomology group $H^2(X,\mathbb{Z})$ of an IHS manifold $X$ is torsion-free and it is equipped with a nondegenerate symmetric bilinear form (known as Beauville--Bogomolov--Fujiki form), which gives it the structure of an integral lattice (see \cite{Fujiki19870}). This lattice is a deformation invariant and in literature  there can be found an explicit description for each example, therefore we will say that an IHS manifold $X$ is of type $L$ if $H^2(X,\mathbb{Z})\simeq L$ implying that the deformation type is fixed.
	Denote with $\mathcal{M}_L$ the space of equivalence classes of pairs $\left(X, \eta\right)$ where $X$ is an IHS manifold of type $L$ and $\eta: H^2(X, \mathbb{Z})\to L$ an isometry of lattices, called \emph{marking}. The equivalence relation is given by: $(X, \eta)\sim (X', \eta')$ if and only if there exists a biregular morphism $\psi: X\to X'$ such that $\eta= \eta'\circ\psi^*$. This is a coarse moduli space as proven in \cite[Section 4]{Huybrechts2003CompactHM}. On this space one can define a \emph{period map} 
	\begin{align*}
		\mathcal{M}_L & \rightarrow \Omega_L \\
		(X, \eta) & \mapsto \eta(H^{2,0}(X))
	\end{align*}
	where $\Omega_L:=\left\{\omega\in\mathbb{P}(L_{\mathbb{C}})\mid (\omega,\omega)=0, \ (\omega, \bar{\omega})>0\right\}$ is called \emph{period domain}. This map is a local homeomorphism \cite[Theorem 5]{Beauville1983} and surjective when restricted to any connected component $\mathcal{M}_L^{\circ}\subset\mathcal{M}_L$ \cite[Theorem 8.1]{Huybrechts2003CompactHM}. \\
	We define the \emph{positive cone} of an IHS manifold $X$ to be the connected component $\mathcal{C}_X$ of the set $\left\{x\in H^{1,1}(X, \mathbb{R})\mid x^2>0\right\}$ containing a Kähler class and its Kähler cone $\mathcal{K}_X\subset H^{1,1}(X, \mathbb{R})$ as the cone consisting of Kähler classes.
	\newline Now we continue our review talking about automorphisms on an IHS manifold $X$. A natural way to characterize them is to look at their action on the symplectic form $\omega_X$, in fact an automorphism $\sigma$ is said \emph{symplectic} if its action is trivial on $\omega_X$, i.e. $\sigma^*(\omega_X)=\omega_X$, and non-symplectic otherwise. In this last case one can check with a simple computation that if an element is in the invariant lattice $H^2(X, \mathbb{Z})^{\sigma^*}$ then it is orthogonal to the symplectic form $\omega_X$. Therefore, $H^2(X, \mathbb{Z})^{\sigma^*}$ is contained in the Néron--Severi lattice $\NS(X)$. Moreover, by \cite[Proposition 6]{Beauvilleremarks}, an IHS manifold admitting a non-symplectic automorphism is always projective.
	\section{Cyclic cubic fourfolds and IHS manifolds}\label{sec: definizionibase}
	In this section we introduce the main object of our study, i.e. cyclic cubic fourfolds, and we explain their relation with IHS manifolds. \newline\newline
	A cubic fourfold is said \emph{cyclic} if it can be obtained as a 3:1 cyclic cover of $\mathbb{P}^4$ branched along a cubic threefold. We are interested in cyclic cubic fourfolds whose branch loci are cubic threefolds that have one isolated ADE singularity. So, with a linear change of coordinates, we can assume that the singularity is $(1:0:0:0:0)$. An equation for the fourfold $Y$ is thus given by the vanishing of the following polynomial
	\begin{equation}\label{eq:cubiche cicliche generiche}
		F(x_0:x_1:x_2:x_3:x_4:x_5) = x_0f_2(x_1,x_2,x_3,x_4) + f_3(x_1,x_2,x_3,x_4) + x_5^3,
	\end{equation}
	where $f_i$ are sufficiently generic homogeneous polynomials of degree $i$ in $\mathbb{C}[x_1,x_2,x_3,x_4]$ such that $p$ is the only singularity of $Y$.
	
	Consider now the hyperplane $H\subset\mathbb{P}^5$ defined by $\{x_0=0\}$. In this hyperplane, which we identify with $\mathbb{P}^4$ of coordinates $(x_1:x_2:x_3:x_4:x_5)$, we consider the surface $\Sigma$ given as the complete intersection of the quadric $Q$ defined by $f_2=0$ and the cubic $K$ defined by $f_3+x_5^3=0$. This surface is deeply linked to the fourfold $Y$. The varieties $Q$ and $K$ might a priori be singular, but as we want $p$ to be the only singular point of $Y$, we check that $K$ is smooth and all singular points of $\Sigma$ are given by the singular points of $Q$ by the following classical theorem.
	
	\begin{thm}[\protect{\cite[Theorem~2.1]{Wall}}]\label{thmWall}
		Let $q$ be a singular point of $\Sigma$. If both $Q$ and $K$ have a singularity in $q$, then the entire line $\overline{pq}$ connecting $p$ and $q$ is singular in $Y$. If $q$ is not a singularity of both $Q$ and $K$ and is an ADE singularity of type \textbf{T} for $Q$ or $K$, then one of the following holds:
		\begin{enumerate}[i)]
			\item $Q$ is smooth at $q$ and the cubic fourfold $Y$ has exactly two singularities, namely $p$ and $p'$, on the line $\overline{pq}$ and $p'$ is of type \textbf{T}.
			\item $Q$ is singular at $q$, the line $\overline{pq}$ meets $Y$ only in $p$ and the blow-up $\Bl_p(Y)$ of $Y$ in $p$  has a singularity of type \textbf{T} at $q$.
		\end{enumerate}
	\end{thm}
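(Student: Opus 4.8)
The plan is to work one line at a time: restrict $F$ to $\overline{pq}$, read off the singular points of $Y$ on it from the Jacobian, and then identify the analytic type by a splitting‑lemma reduction. Set $g := f_3 + x_5^3$, so that $K=\{g=0\}$ and $\Sigma=\{f_2=0\}\cap\{g=0\}$. Parametrising $\overline{pq}$ by $[s:t]\mapsto(s:tq_1:\cdots:tq_5)$ and using homogeneity of $f_2,f_3$ gives
\[
  F\big|_{\overline{pq}} \;=\; t^2\big(s\,f_2(q)+t\,g(q)\big),
\]
which vanishes identically as soon as $q\in\Sigma$; hence $\overline{pq}\subset Y$ and the whole discussion takes place inside $Y$. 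Differentiating and restricting to the line, $\partial_{x_0}F=t^2 f_2(q)$, $\partial_{x_5}F=3t^2q_5^2$ and $\partial_{x_i}F=t\big(s\,\partial_i f_2(q)+t\,\partial_i f_3(q)\big)$ for $i=1,\dots,4$. Thus $p$ (the point $t=0$) is always singular, while a point with $t\neq 0$ is singular in $Y$ precisely when $q_5=0$ and $s\,\nabla f_2(q)+\nabla f_3(q)=0$.

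The first assertion and the enumeration of singular points then follow at once. If $Q$ and $K$ are both singular at $q$ then $\nabla f_2(q)=0$ and $\nabla g(q)=0$, the latter giving $q_5=0$ and $\nabla f_3(q)=0$; every partial above vanishes for all $[s:t]$, so $\overline{pq}\subset\Sing(Y)$. Otherwise $\Sigma$ singular at $q$ forces $\nabla f_2(q)$ and $\nabla g(q)$ to be linearly dependent. If $Q$ is smooth ($\nabla f_2(q)\neq 0$) this means $\nabla g(q)=\mu\,\nabla f_2(q)$; comparing the $x_5$‑components yields $q_5=0$ and $\nabla f_3(q)=\mu\,\nabla f_2(q)$, so the singularity condition becomes $(s+\mu)\nabla f_2(q)=0$ and produces the single extra point $p'=(-\mu:q_1:\cdots:q_4:0)$. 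If instead $Q$ is singular and $K$ smooth, then $\nabla f_2(q)=0$ but $\nabla g(q)\neq 0$, and one checks the singularity condition cannot hold for $t\neq 0$: so $p$ is the only singular point of $Y$ on $\overline{pq}$.

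The heart of the proof is the type statement, which I would obtain by a local splitting. In case (i), centring affine coordinates at $p'$ turns the equation of $Y$ into $w_0\,f_2+(f_3-\mu f_2)+x_5^3$, with $w_0$ the coordinate along the line. Since $Q$ is smooth at $q$, $f_2$ is a submersion there; using $f_2$ as a coordinate, a further coordinate change splits off the nondegenerate rank‑two form $w_0\,f_2$, and the residual equation is exactly $\psi+x_5^3=0$, the local equation of $\Sigma$ at $q$, where $\psi=f_3|_{\{f_2=0\}}$. Hence $Y$ acquires at $p'$ the same ADE type $\mathbf{T}$ as $\Sigma$ at $q$. In case (ii), I would blow up $p$ and pass to the strict transform: in a chart of $\Bl_p\mathbb{P}^5$ adapted to the direction $q$ the equation becomes $f_2+\lambda\,g$, with $\lambda$ the exceptional coordinate and $\{g=0\}$ the local equation of $K$. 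As $K$ is smooth at $q$, $g$ is a submersion, the rank‑two form $\lambda\,g$ splits off, and the residual equation is again that of $\Sigma$ at $q$, namely the restriction of $f_2$ to $K$. Thus $\Bl_p(Y)$ has a singularity of type $\mathbf{T}$ at $q$.

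The step I expect to be the main obstacle is the type‑preservation itself: making rigorous, via the splitting lemma, that the nondegenerate quadratic summand ($w_0\,f_2$, respectively $\lambda\,g$) can be split off analytically so that the ADE type is determined by the residual three‑variable equation, and checking that this residual equation is analytically equivalent to the local equation of $\Sigma$ at $q$. There is also some bookkeeping in case (ii), where the chart of $\Bl_p\mathbb{P}^5$ and the verification that $q$ lies on and is singular for the strict transform must be carried out separately according to whether $q_5=0$ or $q_5\neq 0$; and one must match the type $\mathbf{T}$ of $\Sigma$ at $q$ with the type carried by the singular member among $Q,K$, which rests on the standard hyperplane‑section description of these ADE (compound du Val) singularities.
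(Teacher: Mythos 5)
The paper offers no proof of this statement at all: it is imported verbatim from \cite[Theorem~2.1]{Wall}, so your attempt can only be measured against the classical argument, and your overall strategy (restrict the Jacobian to $\overline{pq}$, then split off the cross term) is indeed the standard one. The first half of your proposal is correct and complete: the line lies in $Y$ once $q\in\Sigma$, the trichotomy $\nabla f_2(q)=\nabla g(q)=0$ versus $\nabla g(q)=\mu\nabla f_2(q)$ with $\nabla f_2(q)\neq0$ versus $\nabla f_2(q)=0\neq\nabla g(q)$ is exactly right, and it correctly produces $p'=(-\mu:q_1:\cdots:q_4:0)$, respectively no further singular point (your reading of ``meets $Y$ only in $p$'' as a statement about $\Sing(Y)$ is the right one, since $\overline{pq}\subset Y$ always). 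Moreover, the splitting step you flag as the main obstacle is actually easier than you fear and needs no analytic splitting lemma: since $u=f_2$ (resp.\ $v=g$) is a genuine coordinate, writing $h=ua+h|_{u=0}$ and absorbing $a$ into $w_0$ (resp.\ into the exceptional coordinate $\lambda$) turns the equation into $uw_0'+h|_{u=0}$ by an explicit biholomorphism, so the type of $Y$ at $p'$ (resp.\ of $\Bl_p(Y)$ at $q$) is exactly the Arnold type of the residual three-variable equation, which is the local equation of $\Sigma$ at $q$.

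The genuine gap is the final identification of that residual type with \textbf{T}. The theorem's \textbf{T} is the type of $q$ as a singularity of the threefold $Q$ or $K$, in the stable-equivalence (Arnold) sense used throughout the paper (cf.\ its normal form $x_1^{k+1}+x_2^2+x_3^2+x_4^2$ for a threefold $A_k$ point); your argument produces the type of the slice $(\Sigma,q)$, i.e.\ of $K$ cut by the smooth $Q$ in case (i) (resp.\ $Q$ cut by the smooth $K$ in case (ii)). Equating the two is precisely the claim that this particular hyperplane-like slice preserves the stable type, and the tool you invoke --- ``the standard hyperplane-section description of these ADE (compound du Val) singularities'' --- points the wrong way: the \emph{generic} hyperplane section of $x_1^{k+1}+x_2^2+x_3^2+x_4^2$ is an $A_1$ point for every $k$ (the rank-three quadratic part stays nondegenerate on a generic hyperplane), so generic sections destroy the type, and the sections that preserve it are exactly the special ones containing the kernel of the Hessian. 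One must therefore argue, from the specific structure at hand ($f_2$ independent of $x_5$, $g=f_3+x_5^3$, and the tangency relation $\nabla g(q)=\mu\nabla f_2(q)$), that the slice is adapted in this sense; this is where the real content of Wall's theorem sits, and your sketch does not supply it. Relatedly, in case (i) you should separate $\mu=0$, where $K$ is genuinely singular at $q$ (and $p'=q\in H$) so that \textbf{T} is defined, from $\mu\neq0$, where $Q$ and $K$ are both smooth and merely tangent at $q$ and the theorem's hypothesis on \textbf{T} is vacuous; your derivation of $p'$ treats these uniformly, but the attribution of the type \textbf{T} only makes sense in the former subcase. As it stands, your proposal proves the trichotomy and ``the new singularity has the type of $(\Sigma,q)$'', which is weaker than the statement quoted.
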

	
	Moreover, if $p$ is a singular point of $Y$ of type \textbf{T}, then by \cite[Lemma~2.1]{DR01}, the singular points of $\Sigma$ are of type \boldmath $\hat{T}$\unboldmath, as described by the following table.
	
	\begin{center}
		\begin{tabular}{|c||c|c|c|c|c|c|c|c|}
			\hline
			\textbf{T} & \boldmath $A_1$ & \boldmath $A_2$ & \boldmath $A_{n\geq 3}$ & \boldmath $D_{4}$ & \boldmath $D_{n\geq 5}$ & \boldmath $E_6$ & \boldmath $E_7$ & \boldmath $E_8$\\
			\hline
			\boldmath$\hat{T}$ & $\emptyset$ & $\emptyset$ &  \boldmath $A_{n-2}$ & 3\boldmath $A_1$ & \boldmath $A_1$ +\boldmath $D_{n-2}$ & \boldmath $A_5$ &  \boldmath $D_{6}$ & \boldmath $E_7$\\
			\hline
		\end{tabular}
	\end{center}
	Note that there exist cases in which there is more than one singular point on $\Sigma$, indeed the number of singularities of $\Sigma$ depends on the number of solutions of the equation $f_3(1, 0, 0, 0, x)=0$.
	The main reason we are interested in this surface is that it is naturally embedded in $F(Y)$, the Fano variety of lines on $Y$. Indeed, if we put together results from \cite[Lemma 3.3, Theorem 3.6]{lehn2015twisted} and \cite[Lemma 6.3.1]{hassett_2000}, we deduce the following.
	
	\begin{thm}\label{Lehn}
		Let $Y\subset\mathbb{P}^5$ be a cubic fourfold with simple isolated singularities, and suppose that it is neither reducible nor a cone over a cubic threefold. Let $p\in Y$ be a singular point and suppose that there exist no plane $\Pi\subset Y$ such that $p\in\Pi$, then the minimal resolution of $\Sigma\simeq F(Y,p)$, the Fano variety of lines in $Y$ passing through $p$, is a $K3$ surface. Moreover, the Fano variety of lines $F(Y)$ of $Y$ is birational to $\hilb^2(\Sigma)$.
	\end{thm}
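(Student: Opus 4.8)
The plan is to reduce the problem to the geometry of the branch cubic threefold and of an associated $K3$ surface. First I would pin down the analytic type of the unique singular point $p$ of $Y_i$. In the local model \eqref{eq:cubiche cicliche generiche} the cyclic triple cover adds the term $x_5^3$ to a local equation of the $A_i$ point of the branch cubic threefold $C_i$; this is exactly the Thom--Sebastiani join with the one-variable $A_2$ germ $x_5^3$, so the germ of $Y_i$ at $p$ has type $A_i\ast A_2$. Reading off the normal form $x_1^2+x_2^2+x_3^2+x_4^{\,i+1}+x_5^3$ gives $\mathbf T=D_4,E_6,E_8$ for $i=2,3,4$ respectively. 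Feeding $\mathbf T$ into the table of \cite{DR01} recalled above then yields that the surface $\Sigma$ attached to $Y_i$ has singularities $\hat{\mathbf T}=3A_1,A_5,E_7$. Since by hypothesis no plane of $Y_i$ meets $\Sing(Y_i)=\{p\}$, Theorem~\ref{Lehn} applies: the minimal resolution $S_i\to\Sigma$ is a $K3$ surface and $F(Y_i)$ is birational to $\hilb^2(\Sigma)$.

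Next I would produce the symplectic resolution $\pi\colon\widetilde{F(Y_i)}\to F(Y_i)$. The singular locus of $F(Y_i)$ is the surface $\Sigma\simeq F(Y_i,p)$ of lines through $p$ (lines avoiding $p$ lie in the smooth locus of $Y_i$), and I would resolve it by combining the minimal resolution $S_i\to\Sigma$ with a resolution of the $\mathbf T$-singularity of $Y_i$---concretely through blow-ups of $\Bl_p Y_i$---transported to the variety of lines. After checking smoothness and $K_{\widetilde{F(Y_i)}}\simeq\mathcal O$, the characterisation of symplectic resolutions recalled above identifies $\pi$ as symplectic, so $\widetilde{F(Y_i)}$ carries a holomorphic symplectic form. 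To fix the deformation type I would place $Y_i$ in a one-parameter family $Y_t$ of cyclic cubic fourfolds with $Y_t$ smooth for $t\neq0$ (deforming $C_i$ to a smooth cubic threefold); by Beauville--Donagi \cite{BD} each $F(Y_t)$ is IHS of $K3^{[2]}$-type, and a simultaneous resolution of the total space over the disc, possibly after a finite base change, exhibits $\widetilde{F(Y_i)}$ as a smooth deformation of $F(Y_t)$. In particular $\widetilde{F(Y_i)}$ is simply connected with $h^{2,0}=1$, hence IHS of $K3^{[2]}$-type. Uniqueness of the resolution I would obtain by showing that the exceptional locus of $\pi$ is divisorial and contains no Mukai-floppable $\mathbb P^2$, so that no competing symplectic resolution can be produced by a sequence of flops.

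With the resolution in hand I would compute $\Pic(\widetilde{F(Y_i)})$. The Plücker polarization of $F(Y_i)$ gives a class of Beauville--Bogomolov square $6$, while the divisorial components of the exceptional locus of $\pi$ form a negative-definite root lattice which, by the local structure of the $\mathbf T$-resolution, is precisely $\mathbf T$. The genericity built into the hypotheses on the $f_i$ forces, via the period of $S_i$, that there are no further algebraic classes, whence $\Pic(\widetilde{F(Y_i)})\simeq\langle6\rangle\oplus\mathbf T=R_i$. As an internal check one has that $\mathbf T$ contains $\hat{\mathbf T}\oplus A_1$ as an index-two sublattice---the extra $A_1$ being the half-diagonal class $\delta$---so that $R_i$ is an index-two overlattice of $\langle6\rangle\oplus\hat{\mathbf T}\oplus\langle-2\rangle=\Pic(\hilb^2(S_i))$; this is what makes $\widetilde{F(Y_i)}$ genuinely finer than the birational model $\hilb^2(\Sigma)$.

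Finally, the covering automorphism of $Y_i$---multiplication of $x_5$ by a primitive cube root of unity $\zeta$---fixes $p$ and the polarization and acts biregularly on $F(Y_i)$; because $\widetilde{F(Y_i)}$ is the unique symplectic resolution it lifts canonically to $\tau\in\Aut(\widetilde{F(Y_i)})$. As established in \cite{BCS} the cover acts on the symplectic form by $\zeta\neq1$, so $\tau$ is non-symplectic and, as recalled above, its invariant lattice lies in $\NS$; writing it as $\langle6\rangle\oplus\mathbf T^{\tau}$, the computation reduces to the induced action on the exceptional $\mathbf T$-configuration. For $\mathbf T=D_4$ the automorphism realizes the order-three triality cyclically permuting the three outer $(-2)$-curves, whose invariant sublattice is $A_2$; for $\mathbf T=E_6,E_8$ it fixes every exceptional curve, so the invariant sublattice is all of $E_6$, resp. $E_8$. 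This produces $T_i=\langle6\rangle\oplus A_2,\ \langle6\rangle\oplus E_6,\ \langle6\rangle\oplus E_8$, as claimed. I expect the main obstacle to be the second stage: constructing the crepant resolution explicitly and proving both crepancy and uniqueness, since the singularities of $F(Y_i)$ are non-isolated and, as the lattice computation shows, the resolution extracts divisors invisible to the birational model $\hilb^2(\Sigma)$; controlling the triality action on the $D_4$-configuration when $i=2$ is the other delicate point.
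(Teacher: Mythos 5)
There is a genuine gap here, and it is structural: your proposal does not prove the statement it was assigned. The statement in question is \Cref{Lehn} itself --- that $\Sigma\simeq F(Y,p)$ has a $K3$ surface as minimal resolution and that $F(Y)$ is birational to $\hilb^2(\Sigma)$ --- but in your second paragraph you write ``Theorem~\ref{Lehn} applies'' and from then on use both of its conclusions as black boxes. What you go on to sketch (identifying the singularity of $Y_i$ as $D_4$, $E_6$, $E_8$ via Thom--Sebastiani, constructing and proving uniqueness of the symplectic resolution, computing $\Pic(\widetilde{F(Y_i)})$ and the invariant lattice of the lifted covering automorphism) is an outline of the paper's main theorem (\Cref{thm: main SamuelPaola}), not of \Cref{Lehn}. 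As an argument for the assigned statement it is circular, and none of its ingredients can be salvaged for that purpose, since the birationality $F(Y)\dashrightarrow\hilb^2(\Sigma)$ is logically prior to everything you do with it.

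What an actual proof requires --- and what the paper supplies, on top of citing \cite[Lemma 3.3, Theorem 3.6]{lehn2015twisted} and \cite[Lemma 6.3.1]{hassett_2000} --- is the explicit correspondence. First, every line $l\subset Y$ through $p$ meets the hyperplane $H=\{x_0=0\}$ in exactly one point, which identifies $F(Y,p)$ with the $(2,3)$-complete intersection $\Sigma=Q\cap K\subset\mathbb{P}^4$; since $\Sigma$ has only isolated ADE singularities (here the no-plane hypothesis enters, via \Cref{thmWall} and the genericity of $f_2,f_3$), its minimal resolution is a $K3$ surface by adjunction. Second, one constructs the birational map directly: the cone $W\subset Y$ over $\Sigma$ with vertex $p$ is the Cartier divisor $\{f_2=0\}\cap Y$, and a generic line $l\subset Y$ meets $W$ in a length-two subscheme $\xi_{l\cap W}$ of $\Sigma$, giving $\varphi^{-1}\colon F(Y)\dashrightarrow\hilb^2(\Sigma)$; conversely, for $\xi\in\hilb^2(\Sigma)$ the intersection $Y\cap\langle\xi,p\rangle$ decomposes as the cone over $\xi$ plus a residual line $l_\xi$, giving the inverse $\varphi$. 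None of this appears in your proposal; the no-plane hypothesis, whose role is precisely to keep $\Sigma$ away from non-ADE behaviour and to control the indeterminacy locus of $\varphi^{-1}$ (a line in $W$ either passes through $p$ or would span a plane $\langle l,p\rangle\subset Y$), is used by you only to trigger the theorem you were meant to establish.
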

	
	\begin{proof}
		We write here explicitly the map $\varphi: F(Y)\dashrightarrow \hilb^2(\Sigma) $, as it will be useful for the next sections. 
		
		By the assumptions of genericity made on $f_2$ and $f_3$, every line $l\subset Y$ passing through the singularity $p$ cuts the hyperplane $H$ in exactly one point and this defines the isomorphism between $\Sigma$ and $F(Y, p):=\{l\in F(Y) \mid  p\in l\}$. Moreover, $\Sigma$ is a (2,3)-complete intersection in $\mathbb{P}^4$ having only isolated ADE singularities, so it admits a minimal model which is a $K3$ surface.
		
		Now, consider $W\subset Y$ the cone over $\Sigma$ with vertex $p$. This is a Cartier divisor on $Y$ cut out by the equation $f_2=0$. Hence a generic line $l\subset Y$ intersects $W$ in exactly two points counted with multiplicity, thus defining a closed subscheme $\xi_{l\cap W}$ of length two on $\Sigma$. Therefore, we can define the birational map 
		\begin{align*}
			\varphi^{-1}:F(Y) & \dashrightarrow \hilb^2(\Sigma) \\
			l & \mapsto \xi_{l\cap W}
		\end{align*}
		The birational inverse of $\varphi$ is given by the natural map
		\begin{align*}
			\varphi:\hilb^2(\Sigma) & \to F(Y) \\
			\xi & \mapsto l_{\xi}
		\end{align*}
		where we define the residual line $l_{\xi}$ as follows. The intersection between $Y$ and $\langle\xi, p\rangle\simeq\mathbb{P}^2$ consists of a cone over $\xi$ and a line $l_\xi$. 
	\end{proof}
	\begin{rmk}
		The hypothesis of not having planes through the singularity is a generic assumption. There are many ways to see this, the easiest is to do a direct computation as done in \Cref{sec:planes}.
	\end{rmk}
	\begin{rmk}
		Note that $\varphi$ has no indeterminacy points. Moreover, note that the indeterminacy locus of $\varphi^{-1}$ is contained in $F(Y, p)\simeq \Sigma$. Indeed, looking at the definition of $\varphi^{-1}$  in the proof above we can see that it is not defined when a line $l\subset Y$ is contained in $W$, the cone over $\Sigma$ with vertex $p$. This means that either $l\subset \Sigma$ or $p\in l$, the former is impossible otherwise the plane $\Pi_{l,p}:=\langle l, p\rangle$ would be contained in $Y$.
	\end{rmk}
	
	\section{A symplectic resolution for $F(Y)$}\label{sec:blowuppa}
	In this section we determine the existence of a symplectic resolution for $F(Y)$ when $Y$ is a cyclic cubic fourfold branched on a cubic threefold having one isolated singularity of type $A_i$ for $i=2,3,4$. We will keep the same notation of \Cref{sec: definizionibase}.
	
	In order to do the computations, let us consider the following equation:
	\begin{equation}\begin{split}\label{eq:cubic}
			F(x_0, \dots, x_5)&= x_0Q(x_2, x_3, x_4)+K(x_1, \dots, x_5)= x_0q_1(x_2, x_3, x_4)+\\ &+x_1^2h_2(x_2, x_3, x_4, x_5)+ x_1q_2(x_2, x_3, x_4, x_5)+k_2(x_2, x_3, x_4, x_5)
		\end{split}
	\end{equation}
	with $q_1=Q$ an homogoeneous polynomial of degree 2 and $k_2$, $q_2$ and $h_2$ homogeneous polynomials which are the part of degree, respectively, three, two and one in $(x_2, ..., x_5)$ of $K(x_1, ..., x_5)$. This equation is close to the one studied by Boissière--Heckel--Sarti \cite[Section 3, Equation (3.2)]{boissiere2023fano} for the cyclic cubic fourfold branched over a cubic threefold with one singularity of type $A_1$. Here we put, following their notation, $h_1=0$ or, equivalently, we considered a rank 3 quadric given by $\{f_2=0\}$.
	\begin{rmk}
		Consider a cyclic cubic fourfold $Y$ whose branch locus is a cubic threefold with one isolated singularity of type $A_k$ and $k>1$. The equation defining such cubic can be brought to the form of \Cref{eq:cubic}.  Indeed, as noted in \Cref{sec: definizionibase}, an equation for $Y$ can be brought to the form of \Cref{eq:cubiche cicliche generiche}. To see that the rank of $f_2$ for these fourfolds is three note that, in a chart containing the singular point, it is equal to the rank of the Hessian at the origin. Consequently, by drawing a comparison with the local analytic form of a singularity of type $A_k$ (see \cite[Section 1]{Arnold}), specifically $x_1^{k+1} + x_2^2 + x_3^2 + x_4^2$, we see that this is three.
	\end{rmk}
	Now, consider $F(Y)$. This is a singular variety with singular locus $F(Y, p)$ by \cite[Corollary 1.11]{AltmanKleiman}. Moreover, the latter is isomorphic, by \Cref{Lehn}, to the singular $K3$ surface $\Sigma$. All the singularities of $\Sigma$ are in the affine chart $x_1\neq 0$, so in order to resolve its singularities we can do a local computation. The point $q_0=(0:1:0:0:0:0)\in\Sing(\Sigma)$, so we call $l_0$ the line $\overline{pq_0}$, i.e. the line corresponding to $q_0$ under the isomorphism $F(Y, p)\simeq\Sigma$. Now consider the Plücker embedding $\Gr(2,6)\hookrightarrow\Pn^{14}$, the Plücker relations yield that $\Gr(2,6)$ is locally given by eight complex coordinates. So, in an affine neighbourhood $U$ of $[l_0]$ we choose Plücker coordinates $(p_{02}, ..., p_{05}, p_{12}, ..., p_{15})$ characterizing the lines passing through the following points: 
	\begin{equation*}
		(1:0:-p_{12}:-p_{13}:-p_{14}:-p_{15}), \ \ (0:1:p_{02}:p_{03}:p_{04}:p_{05}).
	\end{equation*}
	Moreover, we put $(p_{i2},p_{i3},p_{i4})=:p_i$ for better readability. \newline
	
	On this chart a line in $\Pn^5$ is given by:\begin{equation*}
		x_0=\lambda, \ x_1=\mu, \ x_5=-\lambda p_{15}+\mu p_{05}, \ (x_2, x_3, x_4)=-\lambda p_1+\mu p_0
	\end{equation*}
	with $(\lambda:\mu)\in\Pn^1$. Then to find equations for $F(Y)$ we can substitute these expressions in \Cref{eq:cubic} and extract the homogeneous components $\phi^{i, j}$ of degree $(i, j)$ in $(\lambda, \mu)$. Then the equations become:
	\begin{align*}
		\phi^{3, 0}&=q_1(p_1)-k_2(p_1, p_{15})\\
		\phi^{2, 1}&=-2B_1(p_0, p_1)+q_2(p_1, p_{15})+k_2^{2,1}((p_0, p_{05}), (p_1, p_{15}))\\
		\phi^{1, 2}&=q_1(p_0)-h_2(p_1, p_{15})-2B_2((p_0, p_{05}), (p_1, p_{15}))-k_2^{1, 2}((p_0, p_{05}), (p_1, p_{15}))\\
		\phi^{0, 3}&=h_2(p_0, p_{05})+q_2(p_0, p_{05})+k_2(p_0, p_{05}).
	\end{align*}
	Here we denoted with $B_i$ the bilinear forms relative to $q_i$ and with $k_2^{i, j}$ the form of weight $(i,j)$ relative to $k_2$. 
	\begin{prop}\label{prop:blowup}
		The blow-up map $\alpha: \Blow{\Sigma}{F(Y)}\to F(Y)$ is a resolution of the indeterminacies of the rational map $\varphi^{-1}: F(Y)\to \hilb^{2}(\Sigma)$ mentioned in \Cref{Lehn} and $\Blow{\Sigma}{F(Y)}\simeq\hilb^{2}(\Sigma)$.
	\end{prop}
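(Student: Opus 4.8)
The plan is to perform the blow-up explicitly in the local coordinates fixed above and to identify it, chart by chart, with $\hilb^2(\Sigma)$ via the residual-scheme description of $\varphi^{-1}$. I first record that, since $\Sigma=F(Y,p)$ is the locus of lines through $p$, in the chart around $[l_0]$ it is exactly $\{p_1=0,\ p_{15}=0\}\cap F(Y)$; the equations $\phi^{i,j}$ then show that it is cut out in $F(Y)$ by the four functions $p_{12},p_{13},p_{14},p_{15}$, with $\phi^{1,2}|_{\Sigma}=q_1(p_0)$ and $\phi^{0,3}|_{\Sigma}=K(p_0,p_{05})$ recovering the $(2,3)$-complete intersection $\Sigma$. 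As the singular points of $\Sigma$ all lie in this chart, the whole computation is local here.

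Writing $Z=\{p_1=p_{15}=0\}$, and using that $F(Y)$ does not contain $Z$ while $\Sigma$ is cut out in $F(Y)$ by $p_{12},\dots,p_{15}$, the blow-up $\Blow{\Sigma}{F(Y)}$ is the strict transform of $F(Y)$ inside $\Blow{Z}{\mathbb{C}^8}$. In the affine chart $p_{1j}=p_{15}u_j$ of this ambient blow-up I would substitute and divide each $\phi^{i,j}$ by the largest power of $p_{15}$ dividing it: one finds $\phi^{3,0}=p_{15}^2\,\tilde\phi^{3,0}$ with $\tilde\phi^{3,0}=q_1(u)-p_{15}k_2(u,1)$, next $\phi^{2,1}=p_{15}\,\tilde\phi^{2,1}$ with $\tilde\phi^{2,1}=-2B_1(p_0,u)+p_{15}(\cdots)$, while $\phi^{1,2}$ and $\phi^{0,3}$ require no division. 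These four equations $\tilde\phi^{i,j}=0$ define $\Blow{\Sigma}{F(Y)}$ in this chart, with exceptional divisor $\{p_{15}=0\}$.

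To resolve $\varphi^{-1}$ I use the binary quadric cut on the universal line by $W=\{f_2=0\}$: evaluating $f_2=q_1$ along $l$ gives $Q(\lambda,\mu)=q_1(p_1)\lambda^2-2B_1(p_0,p_1)\lambda\mu+q_1(p_0)\mu^2$, whose two roots are the points of $l\cap W$, projected from $p$ to $\Sigma$. All three coefficients lie in the ideal $I_\Sigma$, which is exactly why $\varphi^{-1}$ is undefined along $\Sigma$. On the blow-up these coefficients share a factor $p_{15}$: in the chart above $Q=p_{15}\,\tilde Q$ with $\tilde Q=p_{15}q_1(u)\lambda^2-2B_1(p_0,u)\lambda\mu+\tilde C\mu^2$, where $\tilde C$ is determined by $\tilde\phi^{1,2}=0$, i.e. $q_1(p_0)=p_{15}\tilde C$. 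The root scheme of $\tilde Q$ is then a well-defined length-two subscheme of $\Sigma$ even on $\{p_{15}=0\}$, where one root degenerates to the point $(0:u_2:u_3:u_4:1)\in\Sigma$ read off from the exceptional direction $u$ (note $q_1(u)=0$ there by $\tilde\phi^{3,0}$). This yields a morphism $\psi:\Blow{\Sigma}{F(Y)}\to\hilb^2(\Sigma)$ extending $\varphi^{-1}$, so $\alpha$ resolves the indeterminacies.

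It remains to prove that $\psi$ is an isomorphism. Since $\varphi\circ\psi=\alpha$, the morphism $\psi$ is proper and birational, and it maps onto the normal variety $\hilb^2(\Sigma)$; by Zariski's main theorem it then suffices to check that $\psi$ is quasi-finite, i.e. contracts no curve. Over the smooth part of $\Sigma$ both sides are smooth and this is immediate, so the real content, which I expect to be the main obstacle, is the behaviour over the singular points of $\Sigma$, where both $\Blow{\Sigma}{F(Y)}$ and $\hilb^2(\Sigma)$ are themselves singular of type $\hat T$ ($3A_1$, $A_5$, $E_7$ for $i=2,3,4$). There I would analyse the fibre of $\psi$ directly on the equations $\tilde\phi^{i,j}$, exhibiting the inverse that sends $\xi$ to the pair formed by the residual line $l_\xi$ and the exceptional direction recording the component of $\xi$ colliding with $p$, and checking that the two compositions are the identity; this is where the explicit shape of the $A_i$-singularity enters. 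Once quasi-finiteness is established, $\psi$ is finite and birational onto the normal $\hilb^2(\Sigma)$, hence an isomorphism, giving $\Blow{\Sigma}{F(Y)}\simeq\hilb^2(\Sigma)$.
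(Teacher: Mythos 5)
Your local chart computation is correct: the equations $\tilde\phi^{i,j}$ you obtain for $\Blow{\Sigma}{F(Y)}$ (dividing $\phi^{3,0}$ by $p_{15}^2$, $\phi^{2,1}$ by $p_{15}$, and leaving $\phi^{1,2},\phi^{0,3}$ untouched) are exactly the paper's $\widetilde{\phi}^{i,j}$, and the Zariski-main-theorem endgame is also the paper's. But the central step --- the construction of the morphism $\psi$ extending $\varphi^{-1}$ via the binary quadric $\tilde Q$ --- fails. On the strict transform, the defining equation $\tilde\phi^{2,1}=0$ restricted to $\{p_{15}=0\}$ forces $B_1(p_0,u)=0$ on the \emph{whole} exceptional divisor (and over the singular point of $\Sigma$ one has $p_0=0$ anyway), so there your quadric degenerates to $\tilde Q=\tilde C\mu^2$: \emph{both} roots sit at $(\lambda:\mu)=(1:0)$, i.e.\ at the vertex $p$ itself, whose projection from $p$ to $\Sigma$ is undefined. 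There is no root ``degenerating to $(0:u_2:u_3:u_4:1)$''; worse, that point is in general not even on $\Sigma$: the exceptional equations only give $q_1(u)=0$, while lying on $\Sigma$ also requires $k_2(u,1)=0$. For instance, for $F=x_0(x_2^2+x_3^2+x_4^2)+x_1^2x_2+x_3^3+x_4^3+x_5^3$ and $u=(1,i,0)$ one has $q_1(u)=0$ but $k_2(u,1)=1-i\neq 0$. The actual limit of $\xi_{l\cap W}$ over an exceptional point $(l_0,u)$ is supported at the two points $(\rho_\pm:u_2:u_3:u_4:1)$ of $\Sigma$, where $\rho_\pm$ solve $h_2(u,1)\rho^2+q_2(u,1)\rho+k_2(u,1)=0$: both points of $l\cap W$ collide with $p$ at speed $O(p_{15})$, and their limiting positions are governed by the cubic data $h_2,q_2,k_2$, which $\tilde Q$ (built from $f_2$ alone) cannot see. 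This is precisely why the paper does not work with the cone $W$ on the blow-up but with the splitting of the residual conic $b_1^2h_2(a)+b_1b_2q_2(a)+b_2^2k_2(a)$ of $l_0$ in the plane $\Pi_a$, i.e.\ with the three lines $l_0,l_1,l_2$ of $\Pi_a\cap Y$.

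Two further points. First, your last paragraph defers the fibre analysis over $\Sing(\Sigma)$ as future work, but that analysis \emph{is} the proof: the paper computes that the fibre of $\alpha$ over a singular point of $\Sigma$ is the quadric cone $\widetilde{C}=\{q_1(a)=0\}\subset\mathbb{P}^3$ and identifies it with the fibre of $\varphi$ (planes through $l_0$ whose residual conic splits into two lines through $p$), and this matching of fibres is what gives the bijection to which Zariski's main theorem is applied. Second, the claim that ``over the smooth part of $\Sigma$ both sides are smooth'' is false in the cases at hand (it holds only in the nodal case of \cite{boissiere2023fano}): by \Cref{prop:descrizione locale}, $F(Y)$ has transversal $D_4$, $E_6$ or $E_8$ singularities along $\Sigma\setminus\Sing(\Sigma)$, so a single blow-up of $\Sigma$ does not smooth it, and correspondingly $\hilb^2(\Sigma)$ is singular along the locus of subschemes whose support meets $\Sing(\Sigma)$. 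The reduction over smooth points of $\Sigma$ is instead carried out by the translation argument of \Cref{sec: translation}, which puts the equation in the form treated in \cite{boissiere2023fano}, not by any smoothness statement.
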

	\begin{proof}
		The argument is the same of \cite[Theorem 3.1]{boissiere2023fano}.
		First, let us recall the definition that the map $\varphi$ is a map which associates to any closed subscheme of length two $\xi$ on $\Sigma$ the residual line given by the intersection of the plane $\langle \xi, p\rangle$ and the cubic fourfold $Y$. We want to prove that the rational map $\alpha^{-1}\circ\varphi$ is a bijection and conclude that it is an isomorphism with Zariski's main theorem. To prove the bijection we will compute the fibres of $\alpha$ and $\varphi$ to show that they are isomorphic. \\ 
		First, note that we just need to check what happens over the singularities of $\Sigma$. Indeed, in order to compute the fibers over nonsingular points for the $K3$ surface we can reduce to the smooth case studied in \cite{boissiere2023fano}. This is because, if we want to study the equation locally, in a neighbourhood of a line $l_{\Bar{x}}$ corresponding to a point $\Bar{x}:=(0:\Bar{x}_1:\Bar{x}_2:\Bar{x}_3:\Bar{x}_4:\Bar{x}_5)\in\mathbb{P}^5$, then we can perform a change of variable bringing $\Bar{x}$ to the origin and do the same computations done in \textit{loc.cit.}. See \Cref{sec: translation} for the explicit transformation. \\
		Let us now compute the fibers of the morphism $\varphi$. Any plane $\Pi_a$ containing the line $l_0$ corresponding to the origin cuts $x_0=x_1=0$ in only one point of coordinates $(0:0:a_2:a_3:a_4:a_5)$ corresponding to $a:=(a_2:a_3:a_4:a_5)\in\mathbb{P}^3$. The intersection $\Pi_a\cap Y$ is a plane cubic in $\mathbb{P}^2$ of coordinates $(b_0:b_1:b_2)$ given by the equation $F(b_0:b_1:b_2a)=0$. The line $l_0$ has equation $b_2=0$ on this plane and the residual conic is given by the equation:
		\begin{equation*}
			b_0b_2q_1(a_2, a_3, a_4)+b_1^2h_2(a_2, a_3, a_4, a_5)+ b_1b_2q_2(a_2, a_3, a_4, a_5)+b_2^2k_2(a_2, a_3, a_4, a_5).
		\end{equation*}
		The fiber of $\varphi$ over $l_0$ is given by those planes whose residual conic is the union of two lines through the singular point $p$. Thus it is isomorphic to the cone $$\widetilde{C}:=\{ (a_2:a_3:a_4:a_5)\in\mathbb{P}^3\mid q_1(a_2, a_3, a_4)=0 \}.$$
		Now, in order to compute the blow-up $\Blow{\Sigma}{F(Y)}$, we compute its local expression on the chart $U$. So, it is given locally as the closure of the image of the regular morphism:
		\begin{equation*}
			\begin{tikzcd}
				U\setminus(\Sigma\cap U)\ar[r] & U\times\mathbb{P}^3\\
				((p_0, p_{05}), (p_1, p_{15}))\ar[r, mapsto] & (((p_0, p_{05}), (p_1, p_{15})), (p_{12}:p_{13}:p_{14}:p_{15})).
			\end{tikzcd}
		\end{equation*}
		Denote with $a:=(a_2:a_3:a_4:a_5)$ the coordinates of the $\mathbb{P}^3$. Assuming $a_5\neq0$ put $a_5=1$ and the relations of the blow-up become \begin{equation*}
			p_{1i}=p_{15}a_j
		\end{equation*}
		for $j=2, 3, 4$. Therefore, the equations of $\Bl_{\Sigma}(F(Y))$ on the local chart become:
		\begin{align*}
			\widetilde{\phi}^{3, 0}&=q_1(a)-p_{15}k_2(a, 1)\\
			\widetilde{\phi}^{2, 1}&=-2B_1(p_0, a)+p_{15}(q_2(a, 1)+k_2^{2,1}((p_0, p_{05}), (a, 1)))\\
			\widetilde{\phi}^{1, 2}&=q_1(p_0)-p_{15}(h_2(a, 1)+2B_2((p_0, p_{05}), (a, 1))+k_2^{1, 2}((p_0, p_{05}), (a, 1)))\\
			\widetilde{\phi}^{0, 3}&=h_2(p_0, p_{05})+q_2(p_0, p_{05})+k_2(p_0, p_{05}).
		\end{align*}
		The equation of the fiber under $\alpha$ of a line $l_0$ corresponding to the origin is found putting $p_0=0$ and $p_{15}=0$. After homogeneization: $$\alpha^{-1}(l_0)=\{(a_2:a_3:a_4:a_5)\in\mathbb{P}^3\mid q_1(a_2, a_3, a_4)=0\}=\widetilde{C}.$$ Remember that there exist no planes contained in $Y$ passing through its singular point by assumption. So, the blow-up map $\alpha: \Blow{\Sigma}{F(Y)}\to F(Y)$ is a resolution of the indeterminacies of the rational map $\varphi^{-1}$ since the coordinate $a$ of a line $l_0$ selects one plane $\Pi_a$ which cuts $Y$ in three lines: $l_0, l_1$ and $l_2$. Then $l_1$ and $l_2$ (which are not necessarily distinct) define a closed subscheme of length two on $\Sigma$ as seen in the proof of \Cref{Lehn}. Then, by Zariski's main theorem, $\Blow{\Sigma}{F(Y)}\simeq\Hilb{2}{\Sigma}$.
	\end{proof}
	Consider now the symplectic resolution $\pi:\widetilde{\Sigma}\to\Sigma$ which is just a sequence of blow-ups on the singular points. Recall that $\widetilde{\Sigma}$ is a $K3$ surface and thus we consider its Hilbert square $\hilb^2(\widetilde{\Sigma})$ which is an IHS manifold. The map $\pi$ induces a birational map $\pi^{[2]}:\hilb^2(\widetilde{\Sigma})\dashrightarrow\hilb^2(\Sigma)$ in the following way. To a generic closed subscheme $\xi\xhookrightarrow{\iota_{\xi}}\widetilde{\Sigma}$ of length 2 it associates the scheme theoretic image of $\iota_{\xi}\circ\pi$.
	\begin{rmk}
		The inverse morphism $(\pi^{[2]})^{-1}$ restrict to an isomorphims on $\hilb^2(\Sigma)\setminus Z$ where $Z:=\{\xi\in\hilb^2(\Sigma)\mid \text{supp}(\xi)\cap\Sing(\Sigma)\neq\emptyset\}$. It is easy to see that $Z$ has dimension at most 2.
	\end{rmk}
	\begin{prop}\label{prop:Samuel-Lehn}
		If there exists a birational map $f: X\dashrightarrow Y$ between a normal, irreducible, projective variety $X$ and an IHS manifold $Y$ and a closed subscheme $Z\subset X$ of codimension at least 2 such that $f_{|X\setminus Z}: X\setminus Z\to Y'\subset Y$ is an isomorphism, then $X$ is a symplectic variety with the symplectic form induced by $f$.
	\end{prop}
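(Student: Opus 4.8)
The plan is to transport the symplectic form of $Y$ to $X$ through $f$ and then to verify the two requirements in the definition of a symplectic variety: first that $X$ carries a symplectic form, and then that its pullback to a resolution extends holomorphically. Write $2n=\dim Y=\dim X$ and let $\omega_Y$ be the symplectic form on the IHS manifold $Y$. Since $f$ restricts to an isomorphism $X\setminus Z\xrightarrow{\sim}Y'$ with $Y'$ open in the smooth manifold $Y$, the open set $X\setminus Z$ is itself smooth; hence $\Sing(X)\subset Z$ and $X\setminus Z\subset X^{\reg}$. Pulling back $\omega_Y$ along this isomorphism gives a closed holomorphic $2$-form $\omega_X:=(f|_{X\setminus Z})^*\omega_Y$ on $X\setminus Z$. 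As $Z$ has codimension at least $2$ and $\Omega^2_{X^{\reg}}$ is locally free on the smooth locus, $\omega_X$ extends uniquely across $X^{\reg}\cap Z$ to a holomorphic $2$-form $\omega$ on $X^{\reg}$, that is, to a global section of $\Omega^{[2]}_X$. Its closedness propagates from the dense open $X\setminus Z$ to all of $X^{\reg}$, since $d\omega$ is a holomorphic $3$-form vanishing on a dense subset.

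It remains to check that $\omega$ is non-degenerate at every point of $X^{\reg}$. The $n$-th wedge power $\omega^{\wedge n}$ is a section of the line bundle $K_{X^{\reg}}$ which is nowhere zero on $X\setminus Z$, hence a nonzero section; its zero locus is therefore either empty or of pure codimension $1$ in $X^{\reg}$. But this zero locus is contained in $X^{\reg}\cap Z$, which has codimension at least $2$, so it must be empty. Thus $\omega^{\wedge n}$ never vanishes and $\omega$ is a symplectic form on $X$.

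The substantive point, and the step I expect to be the main obstacle, is to show that for one (hence for every) resolution $\pi\colon\widetilde X\to X$ the pullback $\pi^{*}\omega_{\reg}$, a priori only defined on $\pi^{-1}(X^{\reg})$, extends holomorphically across the exceptional locus; here the codimension-$2$ extension used above is unavailable, because exceptional divisors have codimension $1$. I would handle this by dominating both sides of $f$. Resolving the indeterminacy of the birational map $f\circ\pi\colon\widetilde X\dashrightarrow Y$ produces a smooth projective variety $W$ together with proper birational morphisms $r\colon W\to\widetilde X$ and $h\colon W\to Y$ satisfying $h=(f\circ\pi)\circ r$. Since $Y$ is smooth and $h$ is a morphism, $h^*\omega_Y$ is a genuine holomorphic $2$-form on $W$, and on the dense open set where $f$, $\pi$ and $r$ are all isomorphisms it coincides with $r^*\pi^{*}\omega_{\reg}$.

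Finally I would descend $h^*\omega_Y$ to $\widetilde X$. Because $r\colon W\to\widetilde X$ is a proper birational morphism of smooth varieties, holomorphic forms extend over its exceptional locus, so $r_*\Omega^2_W=\Omega^2_{\widetilde X}$ and $r^*$ identifies $H^0(\widetilde X,\Omega^2_{\widetilde X})$ with $H^0(W,\Omega^2_W)$. Hence there is a unique holomorphic $2$-form $\widetilde\omega$ on $\widetilde X$ with $r^*\widetilde\omega=h^*\omega_Y$; comparing the two forms on the dense common open set shows $\widetilde\omega|_{\pi^{-1}(X^{\reg})}=\pi^{*}\omega_{\reg}$. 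Thus $\widetilde\omega$ is the desired holomorphic extension, $X$ has symplectic singularities, and together with the symplectic form $\omega$ constructed above this proves that $X$ is a symplectic variety.
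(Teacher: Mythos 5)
Your proof is correct, and it takes a genuinely different route from the paper's. The two arguments agree on the first half: both pull back $\omega_Y$ over $X\setminus Z$, extend across the codimension-$\geq 2$ set $Z\cap X^{\reg}$ (you via Hartogs on the locally free $\Omega^2_{X^{\reg}}$, the paper via reflexivity and normality of the sheaf), and both rule out degeneracy by observing that the degeneracy locus of $\omega^{\wedge n}$ would be a divisor contained in a codimension-$2$ set. The divergence is in how the extension over a resolution is obtained. The paper never verifies Beauville's definition directly: it invokes Namikawa's criterion (normal projective, rational Gorenstein singularities, plus a closed non-degenerate $2$-form on $X^{\reg}$ implies symplectic variety), and so it must first prove that $K_X=0$ as a Cartier divisor, run a discrepancy computation on a resolution $W$ of the indeterminacies of $f$ to get canonical singularities, and then cite Elkik's theorem for rationality and Kebekus--Schnell for closedness. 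You instead verify the definition directly: dominate $\widetilde{X}$ and $Y$ by a common smooth model $W$, note that $h^*\omega_Y$ is holomorphic on $W$, and descend it to $\widetilde{X}$ using $r_*\Omega^2_W=\Omega^2_{\widetilde{X}}$ for a proper birational morphism of smooth varieties (this rests on van der Waerden purity, i.e. the image of $\mathrm{Exc}(r)$ has codimension $\geq 2$ in the smooth target, plus Hartogs), and finally identify the descended form with $\pi^{*}\omega_{\reg}$ on $\pi^{-1}(X^{\reg})$ by the identity theorem. Your route is more elementary and self-contained --- no Namikawa, Elkik, or Kebekus--Schnell, only the classical birational invariance of holomorphic forms --- and even your closedness argument (the identity theorem applied to $d\omega$) is simpler than the paper's appeal to rational singularities. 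What the paper's route buys is the intermediate output: triviality of $K_X$ and the canonical/rational Gorenstein nature of the singularities are facts of independent interest in the rest of the paper, and its argument follows the pattern of the references it leans on (Lehn et al.\ and Boissi\`ere--Heckel--Sarti), which keeps the exposition aligned with the literature.
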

	\begin{proof}
		The argument is close to \cite[Theorem 3.6]{lehn2015twisted} and stated explicitly in \cite[Theorem 3.1]{boissiere2023fano}, we write it here for the sake of completeness.
		From the fact that $X'$ is isomorphic to an open subset of $Y$ we deduce that the canonical bundle is trivial on $X'$. Then as $X$ is a normal, irreducible variety and the codimension of $Z$ is at least two, we obtain $K_{X}=0$ as a Cartier divisor. We want to prove now that $X$ has symplectic singularities using \cite[Theorem 6]{Namikawa}, i.e. we need to prove that $X$ has rational Gorenstein singularities and the regular locus $X^{\reg}$ of $X$ admits an everywhere non-degenerate holomorphic closed 2-form. Let $W$ be the desingularization of an elimination of indeterminacies for $f$, so that the following diagram exists and commutes
		\[
		\xymatrix{
			&W \ar[dl]_p\ar[dr]^q  & \\
			X \ar@{-->}[rr]^f                     && Y.
		}
		\]
		Then $H^0(W, \mathcal{O}_W(K_W))=H^0(W, q^*(\mathcal{O}_Y(K_Y))=H^0(Y, \mathcal{O}_Y(K_Y))=\mathbb{C}$ since $Y$ is an IHS manifold, so $K_W$ is effective. But $K_W-p^*K_X=K_W$ as we proved that $K_X$ is trivial and thus the singularities of $X$ are canonical. This implies that $X$ has rational singularities by Elkik–Flenner theorem (\cite[Section 3, page 363]{Reid1985YoungPG}). It remains to prove that $f$ induces an everywhere non-degenerate holomorphic closed 2-form on $X^{\reg}$. Note that clearly $X^{\reg}\setminus Z\simeq X'$ as $X'$ is isomorphic to a smooth open subset of $Y$. Since $X'$ is isomorphic to an open subset of $Y$ it admits a symplectic form inherited from that of $Y'$. Now, $H^0(X', \Omega^2_{X'})\simeq H^0(X^{\reg}, \Omega^2_{X^{\reg}})$ as $\Omega^2_{X^{\reg}}$ is reflexive and by \cite[Theorem 1.6]{Hartshornereflsheaves} any reflexive sheaf is normal . So any symplectic 2-form on $X'$ extends to the whole $X^{\reg}$ and it is still closed as $X$ has rational singularities (see \cite[Theorem 1.13]{KebekusSchnell}). Moreover, it is also non-degenerate otherwise it would degenerate along a divisor cutting also $X'$. Therefore by \cite[Theorem 6]{Namikawa} we deduce that $X$ has at most symplectic singularities and thus it is by definition a symplectic variety. 
	\end{proof}
	\begin{corol}
		The varieties $\hilb^2({\Sigma})$ and $F(Y)$ are symplectic varieties which admit a symplectic resolution.
	\end{corol}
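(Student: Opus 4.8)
The plan is to derive both statements from the criterion of \Cref{prop:Samuel-Lehn}, using the smooth IHS manifold $\hilb^2(\widetilde{\Sigma})$ as target, and then to upgrade the resulting symplectic varieties to ones admitting a crepant resolution. First I would check the standing hypotheses of \Cref{prop:Samuel-Lehn}. The Fano variety $F(Y)$ is the zero locus in $\Gr(2,6)$ of a section of the rank-$4$ bundle $\Sym^3\mathcal{S}^\vee$ and has the expected dimension $4$, hence is a projective local complete intersection; in particular it satisfies Serre's condition $S_2$. Since $\Sing(F(Y))=F(Y,p)\simeq\Sigma$ has codimension $2$, it is regular in codimension $1$, so it is normal by Serre's criterion; it is moreover irreducible, as holds for the Fano variety of lines of such a cubic fourfold. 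Likewise $\hilb^2(\Sigma)$ is irreducible and projective since $\Sigma$ is an integral projective surface, and one checks it to be Cohen--Macaulay with singular locus of codimension $2$, hence normal.

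Next I would record the two birational maps to $\hilb^2(\widetilde{\Sigma})$. For $\hilb^2(\Sigma)$ I take $(\pi^{[2]})^{-1}$, which by the remark preceding \Cref{prop:Samuel-Lehn} is an isomorphism away from $Z=\{\xi\mid\operatorname{supp}(\xi)\cap\Sing(\Sigma)\neq\emptyset\}$, a subset of dimension at most $2$ and hence of codimension at least $2$. For $F(Y)$ I take the composite $(\pi^{[2]})^{-1}\circ\varphi^{-1}\colon F(Y)\dashrightarrow\hilb^2(\widetilde{\Sigma})$; as $\varphi^{-1}$ is an isomorphism outside $F(Y,p)$ (codimension $2$) and $(\pi^{[2]})^{-1}$ outside $Z$, the composite is an isomorphism outside a closed subset of codimension at least $2$. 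Applying \Cref{prop:Samuel-Lehn} with $\hilb^2(\widetilde{\Sigma})$ in the role of the IHS manifold then shows that both $\hilb^2(\Sigma)$ and $F(Y)$ are symplectic varieties; in particular both have trivial canonical bundle.

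To produce a symplectic resolution it suffices, I claim, to resolve $\hilb^2(\Sigma)$. Indeed, by \Cref{prop:blowup} the blow-up $\alpha\colon\hilb^2(\Sigma)\simeq\Blow{\Sigma}{F(Y)}\to F(Y)$ is a projective birational morphism between varieties with trivial canonical bundle, hence crepant; so if $g\colon N\to\hilb^2(\Sigma)$ is a symplectic resolution, then $\alpha\circ g\colon N\to F(Y)$ is a proper birational morphism from a smooth variety with $K_N=(\alpha\circ g)^*K_{F(Y)}$ trivial, i.e. a symplectic resolution of $F(Y)$ by the equivalence between crepant and symplectic resolutions recalled above. To resolve $\hilb^2(\Sigma)$ I would work étale-locally along $Z$, where the singularities are exactly those created by a length-two subscheme meeting an ADE point of $\Sigma$, and desingularize them crepantly by a direct computation in the Plücker chart of \Cref{sec:blowuppa}, following \cite{boissiere2023fano} in the $A_1$ case and checking crepancy through the triviality of the canonical class.

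The main obstacle is this last step. The evident candidate $\hilb^2(\widetilde{\Sigma})$ is smooth with trivial canonical bundle, but $\pi^{[2]}$ is only birational, with indeterminacy precisely along the length-two subschemes supported on the exceptional curves of $\widetilde{\Sigma}\to\Sigma$, so it is not itself a resolution. Nor does an abstract appeal to a $\mathbb{Q}$-factorial terminalization settle the matter: in dimension four a terminal symplectic singularity is isolated, and its presence would obstruct any crepant resolution, so excluding such singularities is equivalent to the assertion to be proved. The genuine content is therefore the explicit crepant desingularization of the ADE-induced symplectic singularities of $\hilb^2(\Sigma)$ for each type $A_i$ with $i=2,3,4$; this is the computation developed in the remainder of the paper, from which the uniqueness of the resolution and its identification as an IHS manifold of $K3^{[2]}$-type (\Cref{thm: main SamuelPaola}) also follow.
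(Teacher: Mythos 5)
Your first half coincides with the paper's own argument: checking that $(\pi^{[2]})^{-1}$ and $(\pi^{[2]})^{-1}\circ\varphi^{-1}$ satisfy the hypotheses of \Cref{prop:Samuel-Lehn} is exactly how the paper concludes that $\hilb^2(\Sigma)$ and $F(Y)$ are symplectic varieties, and your reduction of the resolution problem for $F(Y)$ to the one for $\hilb^2(\Sigma)$, via crepancy of $\alpha$ from \Cref{prop:blowup}, is correct and implicit in the paper.

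The gap is in the second half: you never prove that $\hilb^2(\Sigma)$ admits a symplectic resolution. You propose a ``direct computation in the Plücker chart'', then concede that this is the main obstacle and defer it to ``the computation developed in the remainder of the paper''. That deferral is not available to you: the later sections presuppose the existence of $\widetilde{\hilb^2(\Sigma)}$ (they establish its uniqueness, that it factors through blow-ups of the singular loci, and compute its Picard lattice and automorphism), and where an actual construction is needed they quote Yamagishi's computations rather than redo them, so extracting existence from those sections would be circular. The paper closes this step by citation: existence of the symplectic resolution is a known theorem, provided by \cite[Corollary 5.6]{lehn2015twisted} and \cite[Proposition 3.5]{YamagishiFano}, with \cite[Remark 5.4]{KLSV} as a more general alternative noted right after the corollary. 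Your own analysis shows why some such input is indispensable: as you correctly observe, $\hilb^2(\widetilde{\Sigma})$ is not itself a resolution of $\hilb^2(\Sigma)$, and an appeal to a $\mathbb{Q}$-factorial terminalization only reformulates the problem, since one must still exclude isolated terminal symplectic points. So either you invoke the known results of Lehn and Yamagishi, or you must genuinely carry out the crepant desingularization of the transversal $D_4$, $E_6$, $E_8$ singularities along $\Sigma\setminus\Sing(\Sigma)$ together with the deeper singularities lying over $\Sing(\Sigma)$; your proposal does neither, so the ``admits a symplectic resolution'' half of the statement remains unproven.
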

	\begin{proof}
		Note that both $(\pi^{[2]})^{-1}$ and $(\pi^{[2]})^{-1}\circ\varphi^{-1}$ respect the hypotheses of \Cref{prop:Samuel-Lehn}. Moreover, they admit a symplectic resolution by \cite[Corollary 5.6]{lehn2015twisted} and \cite[Proposition 3.5]{YamagishiFano}.
	\end{proof}
	\begin{rmk}
		The fact that both the varieties $\hilb^2({\Sigma})$ and $F(Y)$ admit a symplectic resolution can be proven with a more general approach, see \cite[Remark 5.4]{KLSV} for the details.
	\end{rmk}
	We will call $\widetilde{\hilb^2(\Sigma)}$ a symplectic resolution of $\hilb^2(\Sigma)$ and, thus, of $F(Y)$ by \Cref{prop:blowup}. This does not imply that every symplectic resolution $\widetilde{F(Y)}$ of $F(Y)$ is of this form. Indeed, a priori, it is not true that any symplectic resolution of a variety factors through its blow-up on the singular locus, but using the fact that $F(Y)$ is a four-dimensional variety we can prove the following lemma.
	\begin{lemma}\label{lemma: risoluzioni simplettiche}
		Every symplectic resolution $R$ of $F(Y)$ factors through the blow-up $\Blow{\Sigma}{F(Y)}$.
	\end{lemma}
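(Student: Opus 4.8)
The plan is to apply the universal property of the blow-up. Write $\pi_R\colon R\to F(Y)$ for the structure morphism of the symplectic resolution and $\alpha\colon\Blow{\Sigma}{F(Y)}\to F(Y)$ for the blow-up of \Cref{prop:blowup}. Since $\Blow{\Sigma}{F(Y)}$ is by definition the blow-up of the ideal sheaf $\mathcal I_\Sigma$, the morphism $\pi_R$ factors (necessarily uniquely) through $\alpha$ if and only if the inverse image ideal sheaf $\mathcal J:=\pi_R^{-1}\mathcal I_\Sigma\cdot\mathcal O_R$ is invertible. As $R$ is smooth this amounts to showing that $\mathcal J$ is locally principal at every point, and the whole proof reduces to this statement. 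Note that off $\pi_R^{-1}(\Sigma)$ the morphism $\pi_R$ is an isomorphism and $\mathcal J=\mathcal O_R$, so the locus where $\mathcal J$ fails to be invertible is a closed subset of $\pi_R^{-1}(\Sigma)$.

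First I would locate the exceptional set. A symplectic resolution is crepant and semismall; since $\Sing F(Y)=F(Y,p)\simeq\Sigma$ has codimension two in the fourfold $F(Y)$, semismallness bounds the dimension of the fibre over a generic point of $\Sigma$ by one. Because $\pi_R$ is nontrivial over $\Sigma$, that generic fibre is exactly a curve, so $E:=\pi_R^{-1}(\Sigma)_{\mathrm{red}}$ is a divisor in $R$ and $\mathcal J$ is invertible at the generic point of $E$.

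Next I would identify the local model over the smooth part $\Sigma^{\circ}$ of the $K3$ surface $\Sigma$. By Kaledin's analytic-local product decomposition for symplectic resolutions (\cite{kaledin2006symplectic}), near a point of the codimension-two leaf $\Sigma^{\circ}$ the pair $\bigl(F(Y),\pi_R\bigr)$ is analytically isomorphic to the product of a polydisc in $\Sigma^{\circ}$ with its transverse slice, which is a two-dimensional symplectic, hence du Val, surface singularity $S$ together with its minimal resolution $\mu\colon\widetilde S\to S$ (the unique symplectic resolution of a du Val singularity). For a du Val singularity the minimal resolution factors through the blow-up of the maximal ideal and $\mathfrak m\cdot\mathcal O_{\widetilde S}$ is invertible; since blowing up $\Sigma$ in the product corresponds to blowing up $\mathfrak m$ in the slice, transporting this through the product shows that $\mathcal J$ is invertible on the whole of $\pi_R^{-1}(\Sigma^{\circ})$.

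The main obstacle is therefore concentrated over the finitely many ADE singular points of $\Sigma$: there semismallness only bounds the fibre dimension by two, so $\pi_R^{-1}(\Sing\Sigma)$ is a closed set of codimension at least two along which, a priori, $\mathcal J$ might fail to be principal. This is exactly the step where the hypothesis $\dim F(Y)=4$ enters. I would treat it by invoking the same local product structure at these deeper points, where the transverse slice is now a four-dimensional symplectic singularity, and analysing it explicitly: after the translation of \Cref{sec: translation} bringing such a point to the origin, the local equations reduce to the model already treated in \cite{boissiere2023fano}, from which one reads off directly that the inverse image of $\mathcal I_\Sigma$ stays locally principal. Since the non-invertibility locus of $\mathcal J$ is closed and has now been shown to meet neither $\pi_R^{-1}(\Sigma^{\circ})$ nor $\pi_R^{-1}(\Sing\Sigma)$, it is empty; hence $\mathcal J$ is invertible and $\pi_R$ factors through $\Blow{\Sigma}{F(Y)}$ by the universal property of the blow-up.
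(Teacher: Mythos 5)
Your general framework is the right one --- factorization through $\Blow{\Sigma}{F(Y)}$ is equivalent to invertibility of $\mathcal{J}=\pi_R^{-1}\mathcal{I}_\Sigma\cdot\mathcal{O}_R$ --- and your handling of the open stratum (semismallness forces one-dimensional fibres over generic points of $\Sigma$, and the minimal resolution of a du Val singularity factors through the blow-up of its maximal ideal, so $\mathcal{J}$ is invertible over $\pi_R^{-1}(\Sigma\setminus\Sing\Sigma)$) is essentially correct, with the caveat that Kaledin's theorem, as cited in the paper, decomposes the symplectic \emph{variety} as a local product; that an arbitrary symplectic \emph{resolution} decomposes compatibly over a codimension-two leaf is true but needs its own justification. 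Note, however, that the paper argues quite differently: it does not analyse local models at all, but invokes the global structure theory of four-dimensional symplectic contractions of Wierzba--Wi\'sniewski, namely that the exceptional locus is either a divisor or two-dimensional with zero-dimensional image \cite[Theorem 1.1 and Lemma 2.1]{wierzba2002small}; since the resolution must be non-trivial over the whole surface $\Sigma$, the second case is impossible, the exceptional locus is a divisor, hence Cartier by smoothness of $R$, and the universal property of the blow-up concludes.

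The genuine gap in your proposal is exactly at the point you yourself call ``the main obstacle'', the fibres over $\Sing\Sigma$, which you do not actually handle. Two concrete problems. First, the translation of \Cref{sec: translation} is the device the paper uses to reduce the fibres over \emph{nonsingular} points of $\Sigma$ to the computations of \cite{boissiere2023fano}, where $\Sigma$ is a smooth $K3$; at a singular point of $\Sigma$ the local geometry is genuinely different (this is the content of \Cref{prop:descrizione locale} and of the $A_2$ analysis, e.g.\ the three components of $\Sing(\hilb^2(\Sigma))$ crossing pairwise), so ``the local equations reduce to the model already treated in \cite{boissiere2023fano}'' is false, and that reference contains no model for such deep points. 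Second, and more fundamentally, no local description of $F(Y)$ at such a point allows you to ``read off'' that $\mathcal{J}$ is locally principal, because $\mathcal{J}$ depends on the unknown resolution $R$, about which you have no information there: over a zero-dimensional leaf semismallness permits two-dimensional fibres, and by Wierzba--Wi\'sniewski such exceptional components are planes $\mathbb{P}^2$ contracted to points, along which a pulled-back ideal can perfectly well fail to be principal. Excluding or controlling these components is the actual mathematical content of the lemma, and it is what the paper's appeal to \cite{wierzba2002small} supplies; your closedness argument cannot substitute for it, since $\pi_R^{-1}(\Sing\Sigma)$ is itself closed, so the non-invertibility locus could equal it without ever ``leaking'' into the strata you did treat.
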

	\begin{proof}
		Consider the symplectic resolution:
		\begin{equation*}
			\gamma:\widetilde{\hilb^2(\Sigma)} \to  F(Y)
		\end{equation*}
		By \cite[Theorem 1.1]{wierzba2002small} the exceptional locus $E$ of $\gamma$ can be either a divisor or 2-dimensional. In the latter case $\dim(\gamma(E))=0$ by \cite[Lemma 2.1]{wierzba2002small}. Therefore, as the singular locus $\Sigma$ of $F(Y)$ is a surface, the map $\gamma$ contracts a divisor $E$ into $\Sigma$. Moreover, as $\widetilde{\hilb^2(\Sigma)}$ is smooth the divisor $E$ is Cartier and by the universal property of blow-up there exists a unique map $\gamma':\widetilde{\hilb^2(\Sigma)}\to\Blow{\Sigma}{F(Y)}$ which factors $\gamma$ through the blow-up.
	\end{proof}
	Thus, we proved that every symplectic resolution of $F(Y)$ is isomorphic to a symplectic resolution of $\hilb^2(\Sigma)$. Nevertheless, in order to highlight the point of view which we are using we will also denote by $\widetilde{F(Y)}$ a symplectic resolution of $F(Y)$.
	
	\section{Geometry of $F(Y)$}
	
	In this section we investigate some geometric properties of $F(Y)$ when $Y$ is a cyclic fourfold branched on a cubic threefold having one isolated singularity of type $A_i$ for $i=2,3,4$. \\
	
	First, we want to study the nature of the singular points of $F(Y)$ on the 2-dimensional leaf, i.e. $$\Sing(F(Y))\setminus\Sing(\Sing(F(Y)))\simeq \Sigma\setminus\{\Sing(\Sigma)\}.$$ 
	\begin{prop}\label{prop:descrizione locale}
		For every point of $\Sigma\setminus\{\Sing(\Sigma)\}\subset F(Y)$ there exists a neighbourhood of $F(Y)$ which is analytically isomorphic to $(\mathbb{C}^2, 0)\times (\Gamma, t)$ with $(\Gamma, t)$ the germ of a point on a surface having an isolated singularity on t of type:
		\begin{enumerate}[i)]
			\item $D_4$ if $C$ has an isolated $A_2$ singularity;
			\item $E_6$ if $C$ has an isolated $A_3$ singularity;
			\item $E_8$ if $C$ has an isolated $A_4$ singularity.
		\end{enumerate}
	\end{prop}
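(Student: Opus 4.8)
The plan is to prove the statement by a local analytic computation in the Plücker chart, following the strategy of \cite{boissiere2023fano} but now with a rank-three quadric $q_1$. The statement has two separate contents: first, that near a point of the two-dimensional leaf $\Sigma\setminus\Sing(\Sigma)$ the germ of $F(Y)$ splits as a product $(\mathbb{C}^2,0)\times(\Gamma,t)$ with $(\Gamma,t)$ a surface germ; and second, the precise ADE type of $(\Gamma,t)$. The first part is a general feature of symplectic varieties: since $F(Y)$ is a symplectic variety (as established above) and $\Sigma\setminus\Sing(\Sigma)$ is its two-dimensional leaf in Kaledin's stratification, the local structure theory of symplectic singularities yields an analytic product decomposition in which the $\mathbb{C}^2$-factor is the leaf direction and $(\Gamma,t)$ is a transverse slice carrying a symplectic, hence Du Val, singularity; alternatively this product structure can be read off directly from the computation below. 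Thus the whole difficulty is the identification of the type of $(\Gamma,t)$, and since the leaf $\Sigma\setminus\Sing(\Sigma)$ is connected it suffices to compute at one generic point $\bar{x}\in\Sigma\setminus\Sing(\Sigma)$.

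As bookkeeping I would first record the type $\mathbf{T}$ of the singularity of $Y$ at $p$. Because $C$ has an isolated $A_i$ singularity the quadric $f_2$ has rank three, so after an analytic change of coordinates in $x_1,\dots,x_4$ (bringing $f_2+f_3$ to the normal form $x_1^{i+1}+x_2^2+x_3^2+x_4^2$) and keeping $x_5^3$ fixed, the germ of $Y$ at $p$ becomes $x_1^{i+1}+x_5^3+x_2^2+x_3^2+x_4^2$, i.e. the suspension by the three squares of the plane curve singularity $x_1^{i+1}+x_5^3$. Reading off its Brieskorn--Pham type gives $\mathbf{T}=D_4,E_6,E_8$ for $i=2,3,4$ respectively, exactly the types claimed for $(\Gamma,t)$. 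This matches the expected behaviour: in the cuspidal case of \cite{boissiere2023fano} the quadric has full rank, $\mathbf{T}=A_2$, and the transverse slice along the (then smooth) $K3$ is of type $A_2$; one therefore expects the transverse type of $F(Y)$ along its leaf to coincide with the type of the singularity of $Y$, but this must be confirmed by the explicit computation.

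The core of the argument is that computation. I would translate a generic point $\bar{x}$ of the leaf to the origin of the eight-dimensional Plücker chart (using the transformation of \Cref{sec: translation}), so that $F(Y)$ is cut out there by the four functions $\phi^{3,0},\phi^{2,1},\phi^{1,2},\phi^{0,3}$ as a four-dimensional complete intersection. Inspecting the linear parts of these four equations at the origin, I would show that exactly three of them are independent; the rank-three Jacobian reflects that $\bar{x}$ is a smooth point of the two-dimensional singular locus $\Sigma$, so that the Zariski tangent space of $F(Y)$ there has dimension five. By the implicit function theorem three coordinates can be eliminated, reducing the germ of $F(Y)$ to a hypersurface in $(\mathbb{C}^5,0)=(\mathbb{C}^2,0)\times(\mathbb{C}^3,0)$; the two coordinates tangent to $\Sigma$ split off as the smooth factor $\mathbb{C}^2$, while the residual equation on the transverse $\mathbb{C}^3$ defines $(\Gamma,t)$. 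Substituting back and computing the weighted-homogeneous normal form of this residual equation, one matches it against the standard ADE list to read off $D_4$, $E_6$ and $E_8$ in the three cases.

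The main obstacle is precisely this normal-form extraction. After eliminating the three transverse variables, one must track how the higher tangency data stored in $f_3$ — exactly the data that upgrades the singularity of $C$ from $A_2$ to $A_3$ and $A_4$ — propagates through the quadratic and cubic terms of $\phi^{3,0},\phi^{2,1},\phi^{1,2}$, since it is these higher-order terms that distinguish $E_6$ and $E_8$ from the generic $D_4$. Controlling the elimination to sufficiently high order, while checking that the rank-three leading quadric $q_1$ produces the cube term and that the genericity hypotheses on $f_2,f_3$ preclude any further degeneration, is where the computation becomes delicate; the constancy of the type over the whole leaf then follows from connectedness together with the product structure already in hand.
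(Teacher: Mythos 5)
Your overall strategy is the same as the paper's: pass to a Pl\"ucker chart centred at a point of the open leaf, eliminate via the implicit function theorem the variables whose linear parts are independent, invoke Kaledin's stratification for the product decomposition $(\mathbb{C}^2,0)\times(\Gamma,t)$, and identify $(\Gamma,t)$ by a semiquasihomogeneous normal-form analysis. (Your elimination of three variables at once is just a reordering of the paper's steps: it eliminates $p_{03},p_{04}$ using $\phi^{1,2},\phi^{0,3}$, restricts to the transverse slice $p_{01}=p_{05}=0$, and then eliminates $p_{13}$ using $\overline{\phi^{2,1}}$.)

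There is, however, a genuine gap: you stop exactly at the step that carries the content of the trichotomy. Everything you do establish --- the five-dimensional Zariski tangent space, the hypersurface presentation, the product structure, the fact that $(\Gamma,t)$ is some rational double point --- already follows from the general theory and does not distinguish $D_4$ from $E_6$ from $E_8$; you yourself label the normal-form extraction ``the main obstacle'' and leave it unresolved. The paper resolves it by exhibiting the residual equation explicitly: after the eliminations above it reads $\overline{\phi^{3,0}}=q_1(\widetilde{p_{13}},p_{14})-k_2(p_{11},\widetilde{p_{13}},p_{14})+p_{15}^3$, where the covering term $x_5^3$ survives verbatim as $p_{15}^3$, the quadric $q_1$ involves only $x_2,x_3,x_4$ (so contributes no power of $p_{11}$ below the fourth), and the rank-three hypothesis forces the quadratic part to be exactly a nonzero multiple of $p_{14}^2$, hence of corank two. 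This pins the weights of $p_{14},p_{15}$ at $\tfrac12,\tfrac13$, so the whole case analysis reduces to the lowest power of $p_{11}$ occurring ($p_{11}^3$, $p_{11}^4$ or $p_{11}^5$), and \Cref{sec:computation of descr locale} translates each of these conditions into the statement that $C$ has an $A_2$, $A_3$ or $A_4$ point. Note also that your parenthetical attribution is off: the cube term is produced by the cyclic covering, not by the rank-three quadric; what the rank-three condition buys is the corank-two quadratic part, which is what pushes the singularity out of the $A$-series (the full-rank case of \cite{boissiere2023fano}, giving $A_2$) into the $D/E$-series. Without this identification --- or an equivalent argument tracking how the $A_i$ data of $C$ enters $k_2$ and $q_1$ --- your proposal proves only that the transverse germ is ADE, not which one, so the heuristic match with the singularity type of $Y$ at $p$ remains unverified.
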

	\begin{proof}
		In order to do local computations we want to change the local chart given by Plücker coordinates given in \Cref{sec:blowuppa} in a way that a point of the 2-dimensional leaf, which we can assume to be $(0:0:1:0:0:0)$, is the origin in the new chart. Therefore, we choose Plücker coordinates characterizing the lines passing through the points: 
		\begin{equation*}
			(1:-p_{11}:0:-p_{13}:-p_{14}:-p_{15}), \ \ (0:p_{01}:0:p_{03}:p_{04}:p_{05}).
		\end{equation*}
		Moreover we may assume (with a slight abuse of notation) that after a linear coordinate change $q_1(x_2, x_3, x_4)=x_2h_1(x_3, x_4)+q_1(x_3, x_4)$ and $K(x_1, \dots, x_5)=x_2^2h_3(x_1, x_3, x_4, x_5)+ x_2q_3(x_1, x_3, x_4, x_5)+k_2(x_1, x_3, x_4, x_5)$.
		With computations analogous to \Cref{sec:blowuppa} we get the following equations for $F(Y)$:
		\begin{align*}
			\phi^{3, 0}&=q_1(\Bar{p_1})-k_2(p_{11}, \Bar{p_1}, p_{15})\\
			\phi^{2, 1}&=-2B_1(\Bar{p_0}, \Bar{p_1})+q_3(p_{11}, \Bar{p_1}, p_{15})+k_2^{2,1}(p_{01}, \Bar{p_0}, p_{05}, p_{11}, \Bar{p_1}, p_{15})-h_1(\Bar{p_1})\\
			\phi^{1, 2}&=q_1(\Bar{p_0})+h_1(\Bar{p_0})-h_3(p_{11}, \Bar{p_1}, p_{15})-2B_2(p_{01}, \Bar{p_0}, p_{05}, p_{11}, \Bar{p_1}, p_{15})+k_2^{1, 2}(p_{01}, \Bar{p_0}, p_{05}, p_{11}, \Bar{p_1}, p_{15})\\
			\phi^{0, 3}&=h_3(p_{01},\Bar{p_0}, p_{05})+q_3(p_{01},\Bar{p_0}, p_{05})+k_2(p_{01},\Bar{p_0}, p_{05}).
		\end{align*}
		Here we put $\Bar{p_i}=(p_{i3}, p_{i4})$.
		In order to determine the nature of the singularity at the origin we use the same argument of \cite[Theorem 4.1 item (2)]{boissiere2023fano}. Note that in a neighbourhood of a nonsingular point of $\Sigma$ the hypersurfaces $Q:=\{Q=0\}$ and $K:=\{K=0\}$ meet transversally, therefore $h_1$ and $h_3$ are not proportional. Hence we can suppose that after a linear change of coordinates $h_1(x_3, x_4)=x_3$ and $h_3(x_1, x_3, x_4, x_5)=x_4$. Therefore, we can use the equations $\phi^{1, 2}$ and $\phi^{0, 3}$ to obtain complex analytic local expressions $\widetilde{p_{03}}$, $\widetilde{p_{04}}$ for $p_{03}$ and $p_{04}$ in terms of $p_{01}, p_{05}, p_{11}, p_{13},p_{14}, p_{15}$. Thus, there exists a local biholomorphism in a neighbourhood of the origin between our variety and the variety $\chi\subset \mathbb{C}^2\times\mathbb{C}^4$ described by the equations:
		\begin{align*}
			\overline{\phi^{3, 0}}&=q_1(\Bar{p_1})-k_2(p_{11}, \Bar{p_1}, p_{15})\\
			\overline{\phi^{2, 1}}&=-2B_1(\widetilde{p_{03}}, \widetilde{p_{04}}, \Bar{p_1})+q_3(p_{11}, \Bar{p_1}, p_{15})+k_2^{2,1}(p_{01}, \widetilde{p_{03}}, \widetilde{p_{04}}, p_{05}, p_{11}, \Bar{p_1}, p_{15})-h_1(\Bar{p_1}).
		\end{align*}
		Here $(p_{01}, p_{05})$ are local coordinates for $\Sigma$. By \cite[Theorem 2.3]{kaledin2006symplectic} (see also \cite[Proposition 2.2]{lehn2021deformations}) we know that every point in $\Sigma\setminus\Sing(\Sigma)$ has a 
		neighbourhood which is locally analytically isomorphic to $(\mathbb{C}^2, 0)\times (\Gamma, t)$ with $(\Gamma, t)$ the germ of a smooth point or a rational double point on a surface. Therefore, as we want to understand the structure over the origin, we put $p_{01}=p_{05}=0$. Thus we can consider the surface $\Gamma$ given by the following equations in $\mathbb{C}^4$:
		\begin{align*}
			\overline{\phi^{3, 0}}&=q_1(\Bar{p_1})-k_2(p_{11}, \Bar{p_1}, p_{15})\\
			\overline{\phi^{2, 1}}&=q_3(p_{11}, \Bar{p_1}, p_{15})-h_1(\Bar{p_1}).
		\end{align*}
		Again we can consider $h_1(\Bar{p_1})=p_{13}$ and use a local inversion of the second equation to obtain a locally $\widetilde{p_{13}}$ for $p_{13}$ as a quadratic expression in $p_{11}, p_{14}, p_{15}$. Now to draw conclusions we need to specialize our manifold to our case. Let us begin with putting the condition of being a cyclic cubic fourfold, then an equation for $\Gamma$ in $\mathbb{C}^3$ given by $p_{11}, p_{14}, p_{15}$ becomes:
		\begin{equation*}
			\overline{\phi^{3, 0}}=q_1(\widetilde{p_{13}}, p_{14})-k_2(p_{11}, \widetilde{p_{13}}, p_{14})+ p_{15}^3.
		\end{equation*}
		Then the following things can happen (see \Cref{sec:computation of descr locale} for explicit computations):
		\begin{enumerate}[i)]
			\item $p_{11}^3$ appears in $k_2(p_{11}, \widetilde{p_{13}}, p_{14})$: then the equation is semiquasihomogenous (SQH) of degree $(\frac{1}{3}, \frac{1}{2}, \frac{1}{3})$ thus it has a $D_4$ singularity at the origin. The condition on $K$ is equivalent to ask an isolated $A_2$ singularity for the cubic threefold $C$.
			\item $p_{11}^4$ appears in $\overline{\phi^{3, 0}}$ and $p_{11}^3$ does not appear in $k_2(p_{11}, \widetilde{p_{13}}, p_{14})$: this monomial appears thanks to both $q_1$ and $k_2$, so if it is not eliminated then the equation is SQH of degree $(\frac{1}{4}, \frac{1}{2}, \frac{1}{3})$ yielding an $E_6$ singularity. This condition is obtained when $C$ has an isolated $A_3$ singularity.
			\item $p_{11}^5$ appears in $k_2(p_{11}, \widetilde{p_{13}}, p_{14})$, $p_{11}^4$ does not appear in $\overline{\phi^{3, 0}}$ and $p_{11}^3$ does not appear in $k_2(p_{11}, \widetilde{p_{13}}, p_{14})$: the equation is SQH of degree $(\frac{1}{5}, \frac{1}{2}, \frac{1}{3})$, yielding an $E_8$ singularity. This condition is satisfied when $C$ has an isolated $A_4$ singularity.
		\end{enumerate}
		Therefore $(\Gamma, 0)$ is determined by the above calculations.
	\end{proof}
	As also noted in \cite{libassi2023git} the behaviour of $F(Y)$ when $Y$ is branched on a cubic fourfold of type $A_2$ is very different from the $A_3$ and $A_4$ cases, therefore, we will divide the study in two different sections.

	\subsection{Cubic fourfold branched on a threefold with an $A_3$ or $A_4$ singularity}\label{sec: A3A4} \hfill\\\\
	We start by studying the geometry of $F(Y_i)$ in the case where $Y_i$ is branched on a cubic threefold having one isolated singularity of type $A_3$ or $A_4$.\\
	
	Consider the covering automorphism $\sigma$ defined on $Y_i$. This automorphism is just the identity on the first 5 coordinates and maps $x_5\mapsto\xi_3\cdot x_5$ with $\xi_3$ a third primitive root of the unity. It is a projectivity and, thus, maps lines to lines. Therefore it induces a linear automorphism on $F(Y_i)$. This is a linear automorphism mapping the singular locus to itself, therefore there exists only one automorphism on $\Bl_{\Sigma}F(Y_i)$ commuting with the blow-up morphism by the universal property of blow-ups.  This automorphism corresponds via the isomorphism of \Cref{prop:blowup} to the natural automorphism $\sigma_i^{[2]}$ induced on $\hilb^2(\Sigma_i)$ by the action of $\sigma$ on $\Sigma_i$. By \cite[Section 2]{yamagishi2018symplectic} if $\Sigma_i$ has an isolated singularity then $\widetilde{\hilb^2(\Sigma)}$ is obtained by a sequence of blow-ups along the successive singular loci, therefore we can iterate the above argument and induce an automorphism $\widetilde{\sigma_i^{[2]}}$ on $\widetilde{\hilb^2(\Sigma_i)}$ (see \cref{sec:auto} for the details). In \cite[Section 7.2, Section 7.3]{libassi2023git} the author studies the manifolds obtained considering the Hilbert square $\hilb^2(\widetilde{\Sigma_i})$, where $\widetilde{\Sigma_i}$ is the $K3$ surface which is the minimal resolution of $\Sigma_i$, and proves that on these manifolds there exists a non-symplectic automorphism of order three whose action in cohomology is represented by $\rho_i\in O(L)$. To be more precise, let $\rho\in O(L)$ be the automorphism induced in cohomology by $\sigma$,  $T_i:=\Pic(\Sigma_i)$ and $S_i:=T_i^{\perp}$; then  $\rho_i\in O(L)$ is the lifting of $\id_{T_i}\oplus\rho|_{S_i}$ to the whole $L$ whose existence is proven in \cite[Section 7]{libassi2023git}.
	\begin{prop}\label{prop: induction of automorphism}
		In the hypotheses of \Cref{teoremone}, the automorphism $\widetilde{\sigma^{[2]}}$ induces $\rho$ in cohomology, i.e. there exists a marking $\eta$ on $\widetilde{\hilb^2(\Sigma)}$ such that $\rho=\eta^{-1}\circ(\widetilde{\sigma^{[2]}})^*\circ\eta$.
	\end{prop}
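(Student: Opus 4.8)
The plan is to deduce the statement from the cohomological computation already available on the Hilbert square of the resolved $K3$ surface, by exhibiting a $\sigma$-equivariant birational identification between the two IHS models attached to $F(Y)$. Besides the symplectic resolution $\widetilde{\hilb^2(\Sigma)}$ of $\hilb^2(\Sigma)$, the minimal resolution $\pi\colon\widetilde{\Sigma}\to\Sigma$ gives the IHS manifold $\hilb^2(\widetilde{\Sigma})$ of $K3^{[2]}$-type on which $\rho$ is defined and computed in \cite{libassi2023git}. As recalled above, $\pi$ induces a birational map $\pi^{[2]}\colon\hilb^2(\widetilde{\Sigma})\dashrightarrow\hilb^2(\Sigma)$ that is an isomorphism away from the locus $Z$ of codimension at least two; composing its inverse with the symplectic resolution $s\colon\widetilde{\hilb^2(\Sigma)}\to\hilb^2(\Sigma)$ yields a birational map
\[
g:=(\pi^{[2]})^{-1}\circ s\colon\widetilde{\hilb^2(\Sigma)}\dashrightarrow\hilb^2(\widetilde{\Sigma})
\]
between two IHS manifolds of $K3^{[2]}$-type. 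Since any birational map between IHS manifolds is an isomorphism in codimension one, $g$ induces a Hodge isometry $g^*\colon H^2(\hilb^2(\widetilde{\Sigma}),\mathbb{Z})\to H^2(\widetilde{\hilb^2(\Sigma)},\mathbb{Z})$ (see \cite{Huybrechts2003CompactHM}).

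Second, I would prove that $g$ is $\sigma$-equivariant. The covering automorphism $\sigma$ lifts uniquely to an automorphism $\widetilde{\sigma}$ of the minimal resolution $\widetilde{\Sigma}$, and this lift induces the natural automorphism $\widetilde{\sigma}^{[2]}$ on $\hilb^2(\widetilde{\Sigma})$, which is precisely the one whose action in cohomology is $\rho$. Since $\pi\circ\widetilde{\sigma}=\sigma\circ\pi$ and the formation of natural automorphisms of Hilbert squares is functorial, $\pi^{[2]}$ intertwines $\widetilde{\sigma}^{[2]}$ with the natural automorphism $\sigma^{[2]}$ on $\hilb^2(\Sigma)$; on the other side, $\widetilde{\sigma^{[2]}}$ was constructed exactly so that $s$ intertwines it with $\sigma^{[2]}$, via the universal property of the successive blow-ups. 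Combining these identities on the common open locus where every map is an isomorphism, and using that a birational map between separated varieties is determined by its restriction to a dense open set, gives the equivariance
\[
g\circ\widetilde{\sigma^{[2]}}=\widetilde{\sigma}^{[2]}\circ g
\]
of birational maps.

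Finally, I would conclude by transport of structure. Taking pullbacks in the equivariance relation and using the isometry $g^*$ gives
\[
(\widetilde{\sigma^{[2]}})^*=g^*\circ(\widetilde{\sigma}^{[2]})^*\circ(g^*)^{-1}
\]
on $H^2(\widetilde{\hilb^2(\Sigma)},\mathbb{Z})$. Hence, fixing a marking $\eta_0$ of $\hilb^2(\widetilde{\Sigma})$ that realizes $(\widetilde{\sigma}^{[2]})^*$ as $\rho$, the marking $\eta$ of $\widetilde{\hilb^2(\Sigma)}$ obtained by composing $\eta_0$ with $g^*$ conjugates $(\widetilde{\sigma^{[2]}})^*$ to $(\widetilde{\sigma}^{[2]})^*$, so that $\rho=\eta^{-1}\circ(\widetilde{\sigma^{[2]}})^*\circ\eta$, as claimed.

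I expect the main obstacle to be the rigorous verification of the equivariance of $g$: one must check that the lift $\widetilde{\sigma^{[2]}}$ furnished abstractly by the universal property of the iterated blow-ups genuinely corresponds, under the birational identification $g$, to the natural automorphism $\widetilde{\sigma}^{[2]}$ on the Hilbert square of the $K3$ resolution, i.e.\ that both encode the same action of $\sigma$ on the lines of $Y$. Concretely this amounts to following $\sigma$ through the explicit correspondence of \Cref{prop:blowup} and the resolution $\pi$, checking commutativity on the open set where all the maps are isomorphisms, and then propagating the identity to the whole birational map by the density of that open set together with the uniqueness of such lifts.
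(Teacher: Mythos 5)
Your proposal is correct, but it follows a genuinely different route from the paper's own proof. You transport the known action $\rho$ from $\hilb^2(\widetilde{\Sigma})$, where it is computed in \cite{libassi2023git}, to $\widetilde{\hilb^2(\Sigma)}$ through the equivariant birational map $g=(\pi^{[2]})^{-1}\circ s$, using that any birational map between IHS manifolds is an isomorphism in codimension one and therefore induces an integral Hodge isometry on $H^2$; the two intertwining relations you need ($s$ intertwines $\widetilde{\sigma^{[2]}}$ with $\sigma^{[2]}$ by the universal property of the successive blow-ups, and $\pi^{[2]}$ intertwines the natural automorphisms induced by $\sigma$) are exactly the identities the paper itself invokes elsewhere (compare $\pi^{[2]}\circ\tau_2=\sigma^{[2]}\circ\pi^{[2]}$ in the $A_2$ section and the construction in the appendix on the automorphism), so your argument closes. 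The paper instead argues lattice-theoretically: it tracks the irreducible components of the exceptional divisor through Yamagishi's explicit blow-up sequences (the $2+2+1$ components over the $A_5$ singularity of $\Sigma$, the seven components in $E_7$ configuration over the $E_7$ singularity), shows each component is individually preserved (an order-three automorphism cannot permute two components, and intersection patterns must be preserved), deduces that the invariant lattice has rank at least that of the Picard lattice, and then pins down the action via the classification of natural automorphism actions on $K3^{[2]}$-type manifolds in \cite[Corollary 5.7, Table 1]{BCS_Kyoto}. Your approach is more uniform (no case distinction between $A_3$ and $A_4$, and it would equally treat $A_2$), and it is closer in spirit to what the paper does in \Cref{prop: automorphismA2} and in the proof of \Cref{teoremone} (comparison of the two resolutions via Mukai flops); what the paper's approach buys in exchange is the explicit geometric information that every exceptional class is individually fixed, so that the invariant lattice visibly coincides with the Picard lattice. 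Two small points you should fix when writing this up: the marking is $\eta=\eta_0\circ(g^*)^{-1}$ rather than a composition of $\eta_0$ with $g^*$, since $g^*$ maps $H^2(\hilb^2(\widetilde{\Sigma}),\mathbb{Z})$ to $H^2(\widetilde{\hilb^2(\Sigma)},\mathbb{Z})$ and $\eta_0\circ g^*$ does not compose; and the $\rho$ of the statement is really the lift $\rho_i$ of $\id_{T_i}\oplus\rho|_{S_i}$ introduced just before the proposition, which is fine for your argument because that is precisely the action attached in \cite{libassi2023git} to the natural automorphism on $\hilb^2(\widetilde{\Sigma})$.
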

	\begin{proof}
		The only irreducible component $E_1$ of the exceptional divisor of the first blow-up $\alpha$ is clearly preserved by the induced automorphism. If $C$ had one isolated singularity of, respectively, type $A_3$ or $A_4$ then $\Sigma$ has, respectively:
		\begin{itemize}
			\item one isolated singularity of type $A_5$. Then by \cite[Section 2]{yamagishi2018symplectic} $\widetilde{\hilb^2(\Sigma)}$ is obtained by $\hilb^2(\Sigma)$ via 3 blow-ups, two of them introducing each one two irreducible components of the effective divisor and the third another one. At every blow-up, the induced automorphism maps the subgroup of the Picard group generated by the irreducible components of the exceptional divisor into itself because it commutes with the composition of blow-ups by the universal property of blow-ups. As the automorphism has order three it cannot swap two irreducible components, thus it preserves all the 6 irreducible components of the exceptional divisor introduced at each blow-up.
			\item One isolated singularity of type $E_7$. Then by \cite[Section 2]{yamagishi2018symplectic} the successive blow-ups introduce on $\widetilde{\hilb^2(\Sigma)}$ 7 irreducible exceptional divisors in an $E_7$ configuration. The induced automorphism maps the subgroup of the Picard group generated by these divisors into itself as it commutes with the composition of blow-ups by the universal property of blow-ups. We recall here the picture of the blow-ups needed.
			\begin{center}
				\begin{tikzpicture}
					\draw[<-]        (0.5,1)   -- (1.5,1);
					\filldraw[black] (0, 1) circle (0.5 mm) node[anchor=south]{\small{$E_7$}};
					\draw[red] (1.5,0.5) -- (2.5,1.5);
					\filldraw[black] (2, 1) circle (0.5 mm) node[anchor=west]{\small{$D_6$}};
					\draw[<-]        (3,1)   -- (4,1);
					\filldraw[black] (4.5, 1) circle (0.5 mm) node[above=3pt]{\small{$D_4$}};
					\draw[red] (4,0.5) -- (5, 1.5);
					\draw[blue] (4, 1.5) -- (5, 0.5);
					\filldraw[black] (4.75, 0.75) circle (0.5 mm) node[ below ]{\small{$A_1$}};
					\draw[<-]        (5,1)   -- (6,1);
					\draw[green] (6.25,0.25) -- (6.25, 1.75);
					\draw[red] (6, 1.25) -- (7,2);
					\draw[blue] (6, 0.75) -- (7, 0);
					\draw[green] (6.4, 0.2) -- (7.75, 0.2);
					\filldraw[black] (6.25, 0.5625) circle (0.5 mm) node[ below left ]{\small{$A_1$}};
					\filldraw[black] (6.25, 1) circle (0.5 mm) node[ right ]{\small{$A_1$}};
					\filldraw[black] (6.25, 1.4375) circle (0.5 mm) node[ above left ]{\small{$A_1$}};
					\draw[<-]        (8.1,1)   -- (9.1,1);
					\draw[green] (9.75,0.25) -- (9.75, 1.75);
					\draw (9.5, 1.25) -- (10.5,2);
					\draw (9.5, 0.75) -- (10.5, 0);
					\draw[blue] (9.9, 0.2) -- (11.25, 0.2);
					\draw[green] (10.65, 0)--(11.65,0.75);
					\draw[red] (9.9, 1.8) -- (11.25, 1.8);
					\draw (9.5, 1) -- (11, 1);
					
				\end{tikzpicture}
			\end{center}
			Each arrow represents a blow-up. Each line represents an irreducible component of the exceptional divisor which are coloured with a different colour at each blow-up. So, with the same argument of above we deduce that the irreducible components introduced at the first three blow-ups are preserved. Finally, the three irreducible components introduced by the last blow-up cannot be permuted as the intersection between different irreducible components needs to be preserved (as the first 4 irreducible components are preserved).
		\end{itemize} 
		Therefore the invariant lattice has, respectively, rank at least 7 or 9. 
		By \cite[Section 7.2, Section 7.3]{libassi2023git}, the Picard lattice is isomorphic, respectively, to $\langle 6\rangle\oplus E_6$ or $\langle 6\rangle\oplus E_8$ which have, respectively, rank 7 or 9. By \cite[Corollary 5.7]{BCS_Kyoto}, the action of natural automorphisms on IHS manifolds of $K3^{[2]}$-type are uniquely determined by the action on the invariant lattice and the possibilities are listed in \cite[Table 1]{BCS_Kyoto}. Thus, checking all the possibilities, the invariant lattices must be isomorphic to the Picard lattices and the action of $\widetilde{\sigma_i^{[2]}}$ induces $\rho_i$ in cohomology.
	\end{proof}
	When a cubic threefold $C$ has an isolated singularity of type $A_3$ or $A_4$ then the cyclic cubic fourfold $Y$ associated to $C$ has, respectively, an isolated singularity of type $E_6$ or $E_8$ by Theorem \ref{thmWall}.
	
	Therefore, considered also the results of \Cref{sec:blowuppa}, we can state the following theorem.
	\begin{thm}\label{teoremone}
		Let $C_i$ be a complex projective cubic threefold having one isolated singularity of type $A_i$ for $i= 3, 4$ and let $Y_i$ be its associated cyclic cubic fourfold. Assume that there exist no plane $\Pi\subset Y$ such that $\Pi\cap\Sing(Y_i)\neq\emptyset$. Then the Fano variety of lines $F(Y_i)$ of $Y$ admits a unique symplectic resolution by an IHS manifold of $K3^{[2]}$-type $\widetilde{F(Y_i)}$. \\
		Moreover, there exists an integral lattice $T_i$, defined below, such that:\begin{enumerate}[i)]
			\item $\Pic\left(\widetilde{F(Y_i)}\right)\simeq T_i$;
			\item there exists a non-symplectic automorphism $\tau\in\Aut\left(\widetilde{F(Y_i)}\right)$ whose invariant sublattice is $H^2(\widetilde{F(Y_i)}, \mathbb{Z})^{\tau^*}\simeq T_i$
		\end{enumerate}
		with $T_i$ and $R_i$ defined in the following table:
		\begin{center}
			\begin{tabular}{|c|c|} 
				\hline
				$i$ & $T_i$\\
				\hline
				$3$ &  $\langle 6\rangle\oplus E_6$\\ 
				\hline
				$4$ & $\langle 6\rangle\oplus E_8$\\ 
				\hline
			\end{tabular}
		\end{center}
	\end{thm}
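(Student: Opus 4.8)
The plan is to assemble the results of Sections~\ref{sec:blowuppa} and~\ref{sec: A3A4}, since the technical core has already been established; the theorem is essentially a packaging statement. For \emph{existence and $K3^{[2]}$-type}, I would recall from \Cref{prop:blowup} that $\Blow{\Sigma}{F(Y_i)}\simeq\hilb^2(\Sigma_i)$, and from the corollary following \Cref{prop:Samuel-Lehn} that $F(Y_i)$ is a symplectic variety admitting a symplectic resolution. Concretely, letting $\pi:\widetilde{\Sigma_i}\to\Sigma_i$ be the minimal resolution, \Cref{Lehn} gives that $\widetilde{\Sigma_i}$ is a $K3$ surface, and Yamagishi's successive blow-ups \cite[Section 2]{yamagishi2018symplectic} produce a symplectic resolution $\gamma:\widetilde{\hilb^2(\Sigma_i)}\to\hilb^2(\Sigma_i)$ that is birational to $\hilb^2(\widetilde{\Sigma_i})$ via $\pi^{[2]}$. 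Since both are smooth and holomorphic symplectic, they are deformation equivalent, so $\widetilde{F(Y_i)}:=\widetilde{\hilb^2(\Sigma_i)}$ is an IHS manifold of $K3^{[2]}$-type resolving $F(Y_i)$ under the identification of \Cref{prop:blowup}.

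For \emph{uniqueness}, I would invoke \Cref{lemma: risoluzioni simplettiche}: any symplectic resolution of $F(Y_i)$ factors through $\Blow{\Sigma}{F(Y_i)}\simeq\hilb^2(\Sigma_i)$, so it suffices to show $\hilb^2(\Sigma_i)$ has a unique symplectic resolution. By \Cref{prop:descrizione locale} the transverse singularity along the $2$-dimensional leaf $\Sigma_i\setminus\Sing(\Sigma_i)$ is the ADE surface singularity $E_6$ (for $i=3$) or $E_8$ (for $i=4$), whose minimal resolution is unique, while over the finitely many deeper points the resolution is dictated by the explicit blow-up sequence of \cite[Section 2]{yamagishi2018symplectic}. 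These local pictures glue to a single symplectic resolution, giving uniqueness up to isomorphism.

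For the \emph{lattice statements}, I would take $\tau:=\widetilde{\sigma^{[2]}}$, the lift to $\widetilde{F(Y_i)}$ of the covering automorphism $\sigma$: it preserves $\Sing(F(Y_i))=\Sigma_i$, lifts to $\Blow{\Sigma}{F(Y_i)}\simeq\hilb^2(\Sigma_i)$ by the universal property of blow-ups, and then to the iterated blow-ups by the argument of \Cref{prop: induction of automorphism}. That proposition shows $\tau$ induces $\rho$ in cohomology and, via the rank count of the exceptional configuration together with \cite[Corollary 5.7]{BCS_Kyoto}, forces $\Pic\bigl(\widetilde{F(Y_i)}\bigr)=H^2(\widetilde{F(Y_i)},\mathbb{Z})^{\tau^*}$ to equal the Picard lattice computed in \cite[Section 7.2, Section 7.3]{libassi2023git}, namely $\langle 6\rangle\oplus E_6$ for $i=3$ and $\langle 6\rangle\oplus E_8$ for $i=4$, i.e. $T_i$. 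Finally $\tau$ is non-symplectic, since $\sigma$ scales $x_5$ by a primitive cube root of unity and hence acts nontrivially on the holomorphic symplectic form.

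The main obstacle is the combination of uniqueness with the sharp identification $\Pic\bigl(\widetilde{F(Y_i)}\bigr)=H^2(\widetilde{F(Y_i)},\mathbb{Z})^{\tau^*}=T_i$: one must verify that the transverse ADE resolutions of \Cref{prop:descrizione locale} assemble into a \emph{single} global symplectic resolution with no competing flops, and that the rank of the invariant lattice read off from the exceptional divisors matches exactly the rank of the Picard lattice of \cite{libassi2023git}, so that the classification in \cite[Table 1]{BCS_Kyoto} leaves no room for a strictly larger Picard lattice or a different natural action.
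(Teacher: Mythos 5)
Your overall architecture coincides with the paper's: identify $\Bl_{\Sigma}F(Y_i)$ with $\hilb^2(\Sigma_i)$ via \Cref{prop:blowup}, reduce every symplectic resolution of $F(Y_i)$ to one of $\hilb^2(\Sigma_i)$ via \Cref{lemma: risoluzioni simplettiche}, obtain the $K3^{[2]}$ deformation type from the birational comparison with $\hilb^2(\widetilde{\Sigma_i})$, and get the Picard and invariant lattices from \Cref{prop: induction of automorphism} together with the computations of \cite{libassi2023git}. Those parts are essentially the paper's proof, up to one loose point of phrasing: ``smooth and holomorphic symplectic'' is not by itself enough to invoke deformation equivalence of birational manifolds --- one needs both to be IHS --- which is why the paper instead passes through \cite[Theorem 1.2]{wierzba2002small}, realizing $\hilb^2(\widetilde{\Sigma_i})$ and $\widetilde{\hilb^2(\Sigma_i)}$ as symplectic resolutions of $\Sym^2(\Sigma_i)$ connected by Mukai flops.

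The genuine gap is the uniqueness step. The paper disposes of it by citing Yamagishi's theorem that $\hilb^2(\Sigma)$ admits a \emph{unique} symplectic resolution when $\Sigma$ has one isolated ADE singularity \cite{yamagishi2018symplectic}. You instead argue locally: the transversal $E_6$/$E_8$ singularities along the open leaf have unique minimal resolutions, and over the finitely many deepest points ``the resolution is dictated by the explicit blow-up sequence,'' and these local pictures ``glue.'' This does not prove uniqueness. By \cite[Theorem 1.1]{wierzba2002small} --- the very result used elsewhere in the paper --- two symplectic resolutions of a four-dimensional symplectic variety can differ exactly by Mukai flops of $\mathbb{P}^2$'s contracted to points, i.e.\ precisely over the $0$-dimensional leaves where your argument has no content: nothing forces an arbitrary symplectic resolution to restrict to Yamagishi's blow-up sequence near the deep point, and local agreement along the open stratum is compatible with two resolutions differing by such a flop. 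Ruling this out requires either Yamagishi's uniqueness theorem itself (not merely his construction in \cite[Section 2]{yamagishi2018symplectic}), or a verification that the fiber over the deepest point contains no $\mathbb{P}^2$ component --- the analogue of what the paper does in the $A_2$ case in \Cref{prop: unicity A2}. You correctly flag ``no competing flops'' as the main obstacle, but flagging it is not filling it; as written, the word ``unique'' in the statement is unproven.
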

	\begin{proof}
		First, note that by \cite{yamagishi2018symplectic}, the symplectic resolution $\widetilde{\hilb^2(\Sigma)}$ of  $\hilb^2(\Sigma)$ is unique when $\Sigma$ has one isolated singularity of type ADE. \\ \\
		Let us consider now the following diagram
		\begin{equation*}
			\begin{tikzcd}
				\hilb^2(\widetilde{\Sigma}) \ar[dr, dashed, "\pi^{[2]}"] \ar[d] & \widetilde{\hilb^2(\Sigma)}\ar[d, "\psi"] \ar[r, "\simeq"] & \widetilde{F(Y)} \ar[d]\\ \Sym^2(\widetilde{\Sigma})\ar[dr]
				& \hilb^2(\Sigma) \ar[rd, "\varphi"] \ar[r, "\simeq"] \ar[d, "hc"] & \Bl_{\Sigma}F(Y)\ar[d, "\alpha"]\\
				&\Sym^2(\Sigma)&F(Y) 
			\end{tikzcd} 
		\end{equation*}
		The left side of this diagram shows that $\hilb^2(\widetilde{\Sigma})$ and $\widetilde{\hilb^2(\Sigma)}$ are both symplectic resolutions of the same object $\Sym^2(\Sigma)$; thus, by \cite[Theorem 1.2]{wierzba2002small}, there exists a sequence of Mukai flops mapping $\hilb^2(\widetilde{\Sigma})$ to $\widetilde{\hilb^2(\Sigma)}$. This in particular means that  $\hilb^2(\widetilde{\Sigma})$, $\widetilde{\hilb^2(\Sigma)}$ and $\widetilde{F(Y)}$ have the same period as IHS manifolds of $K3^{[2]}$-type. The manifolds $\hilb^2(\widetilde{\Sigma})$ and the presence of a non-symplectic automorphism of order 3 have been deeply studied in \cite[Section 7.1, Section 7.2]{libassi2023git}. The existence of the automorphism $\tau_i$ and its action in cohomology is determined by \Cref{prop: induction of automorphism}. The details of this automorphism follow from the study made in \cite{libassi2023git}.
	\end{proof}
	Moreover, we can say more about the geometry of $\widetilde{F(Y)}$.
	\begin{prop}
		The variety $\mathcal{H}$, obtained by $F(Y)$ after a suitable number of successive blow-ups on the singular loci, has transversal ADE singularities, thus its blow-up on $\Sing(\mathcal{H})$ is a crepant resolution.
	\end{prop}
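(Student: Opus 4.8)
The plan is to follow the analytic type of the transversal singularities of $F(Y)$ along the entire chain of blow-ups producing the symplectic resolution $\widetilde{F(Y)}$, and to single out $\mathcal{H}$ as the penultimate variety of this chain, where the transversal type has dropped to $A_1$. Recall first that $\Sing(F(Y)) = \Sigma$ is a $K3$ surface with a single isolated singular point $q = \Sing(\Sigma)$, which by the table of \Cref{sec: definizionibase} is of type $A_5$ when $C$ is of type $A_3$ and of type $E_7$ when $C$ is of type $A_4$. By \Cref{prop:descrizione locale} the variety $F(Y)$ already has transversal ADE singularities, of type $E_6$ (resp.\ $E_8$), along the two-dimensional leaf $\Sigma \setminus \{q\}$, so the only delicate locus is the one lying over $q$.

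By \Cref{prop:blowup} the first blow-up, taken along the singular surface $\Sigma$, identifies $\Bl_\Sigma F(Y)$ with $\hilb^2(\Sigma)$. Near a length-two subscheme supported at $q$ and at a smooth point of $\Sigma$, this Hilbert scheme is analytically a product of $\Sigma$ near $q$ with $\Sigma$ near a smooth point; hence along the two-dimensional part of $\Sing(\hilb^2(\Sigma))$ the transversal singularity is precisely the ADE singularity of $\Sigma$ at $q$, that is $A_5$ (resp.\ $E_7$). Since $\Sigma$ carries only isolated ADE singularities, \cite[Section 2]{yamagishi2018symplectic} guarantees that $\widetilde{\hilb^2(\Sigma)}$ is obtained from $\hilb^2(\Sigma)$ by a finite sequence of blow-ups along the reduced singular loci, and I would track how each blow-up lowers the transversal type, through $A_5, A_3, A_1$ in the $A_3$-case and through $E_7, D_6, D_4, 3A_1$ in the $A_4$-case, reproducing the exceptional configurations drawn in the proof of \Cref{prop: induction of automorphism}.

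I would then define $\mathcal{H}$ as the variety obtained one blow-up before the end of this chain, so that its singular locus is a smooth surface along which the transversal singularity is everywhere an ADE singularity of type $A_1$ (a single component for $i = 3$, three components for $i = 4$). A transversal $A_1$ singularity is analytically $(\mathbb{C}^2, 0) \times (\{uv = w^2\}, 0)$, so $\Sing(\mathcal{H})$ is smooth of codimension two, and blowing it up replaces each transversal node by a single $\mathbb{P}^1$, producing a smooth variety; because the $A_1$ singularity is Du Val, this blow-up is crepant. Hence the blow-up of $\Sing(\mathcal{H})$ is a crepant, and therefore symplectic, resolution, which is the $\widetilde{F(Y)}$ of \Cref{teoremone}.

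The main obstacle is the behaviour over the point $q$. Away from $q$ the successive reductions are the elementary local computations on transversal ADE surfaces, but over $q$ one resolves the $K3$ surface $\Sigma$ and lowers the transversal type at the same time, and one must verify that these two processes are compatible: that the singular locus of each intermediate variety, and in particular of $\mathcal{H}$, is genuinely smooth, and that no transversal singularity worse than ADE is ever created. I expect to settle this either by invoking the explicit description of Hilbert schemes of ADE surfaces in \cite{yamagishi2018symplectic}, or by a local computation in Plücker coordinates centred at the line corresponding to $q$, in the spirit of \Cref{prop:descrizione locale}, checking that the Kaledin stratification closes up with exactly the transversal types listed above.
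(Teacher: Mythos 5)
Your skeleton is the same as the paper's --- Yamagishi's chain of blow-ups along reduced singular loci, with the ADE type dropping at each step, and crepancy of the final blow-up --- but your argument stops exactly where the proposition has content, and you say so yourself: the smoothness of $\Sing(\mathcal{H})$ and the behaviour over the deepest stratum (the locus lying over the singular point $q$ of $\Sigma$) are deferred to ``invoking \cite{yamagishi2018symplectic} or a local computation''. That deferred verification \emph{is} the proposition; the rest (crepancy of a transversal $A_1$ blow-up, the product structure away from $q$) is routine. Worse, the precise statement you propose to verify is stronger than necessary and possibly delicate: you define $\mathcal{H}$ as the penultimate stage and assert its singular locus is a \emph{smooth} surface with transversal $A_1$ singularities, with three components in the $i=4$ case. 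But the paper's own \Cref{prop: sing A2} shows that for a surface with three $A_1$ points the three components of the singular locus of its Hilbert square intersect pairwise, and at such intersection points the fourfold is locally a product of two surface ADE germs --- not a transversal ADE singularity, and a point where ``replace each transversal node by a single $\mathbb{P}^1$'' does not apply. Whether such crossings survive at the $3A_1$ stage of the $E_7$ chain is precisely the kind of question your plan leaves open, so the claim ``penultimate $=$ smooth singular locus of pure transversal type $A_1$'' cannot simply be asserted.

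The paper's proof is organized so that none of this explicit tracking is needed. It defines $\mathcal{H}$ not as the penultimate stage but as the first stage with no $0$-dimensional symplectic leaves, i.e.\ with $\Sing(\mathcal{H})$ smooth; this occurs after finitely many steps because, by \cite{yamagishi2018symplectic}, each blow-up replaces the singularities by those of $\hilb^2(\Gamma')$ with $\Gamma'$ of strictly smaller ADE type. Once the singular locus is smooth, the transversal ADE structure is automatic from the local structure theory of symplectic varieties (Kaledin's stratification, the same mechanism as in \Cref{prop:descrizione locale}): every point of a smooth two-dimensional singular locus has a neighbourhood of the form $(\mathbb{C}^2,0)\times(\Gamma,t)$ with $(\Gamma,t)$ a rational double point, with no need to identify which one. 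Crepancy of the blow-up along $\Sing(\mathcal{H})$ is then given by \cite[Proposition 4.2]{perroni2007chenruan} for an \emph{arbitrary} transversal ADE type, not only $A_1$. So the pair ``Kaledin $+$ Perroni'' dissolves exactly the obstacle you flag; if you insist on your more explicit route, the missing ingredient you must supply from \cite{yamagishi2018symplectic} is the description of the singular locus at each intermediate stage, including over $q$, showing it is smooth and that its components do not meet, before your $A_1$ argument can run.
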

	\begin{proof}
		In \cite{yamagishi2018symplectic} the author shows with his computations that if $\Sigma$ has an isolated singularity of type $T_n$ with $T_n$ a singularity of type ADE then $\Bl_{\Sing(\hilb^2(\Sigma))}\hilb^2(\Sigma)$ has the same singularities of $\hilb^2(\Gamma)$ with $\Gamma$ a surface with an isolated singularity of type $T'_m$ and $m<n$. In particular after a suitable number of blow-ups the variety $\mathcal{H}$, obtained by successive blow-ups along singular loci, will not have $0$-dimensional symplectic leaves, i.e. $\Sing(\mathcal{H})$ will be smooth. So, by \Cref{prop:descrizione locale}, the variety $\mathcal{H}$ has only transversal singularities. The blow-up is then a crepant resolution by \cite[Proposition 4.2]{perroni2007chenruan}.
	\end{proof}
	\subsection{Cubic fourfold branched on a threefold with a $A_2$ singularity}\hfill\\\\
	Now we want to focus to the case where $C$ has an isolated singularity of type $A_2$ and thus $\Sigma$ has three $A_1$ singularities, namely $q_0, q_1$ and $q_2$. \newline \newline
	First, we want to describe the singular locus $\Sing(\hilb^2(\Sigma))$.
	\begin{prop}\label{prop: sing A2}
		Suppose that $\Sigma$ has three $A_1$ singularities $q_0, q_1$ and $q_2$, then $\Sing(\hilb^2(\Sigma))$ consists of three irreducible components $\widetilde{\Sigma_i}\simeq\Bl_{q_i}\Sigma$. An irreducible component $\widetilde{\Sigma_i}$ intersects the other two components in $q_j+q_i$, with $j\neq i$.
	\end{prop}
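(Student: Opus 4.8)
The plan is to work throughout with the isomorphism $\hilb^2(\Sigma)\simeq\Blow{\Sigma}{F(Y)}$ of \Cref{prop:blowup}, so that describing $\Sing(\hilb^2(\Sigma))$ becomes the description of the singular locus of a variety whose singularities we control. Away from the three nodes $q_0,q_1,q_2$ of $\Sigma$, by \Cref{prop:descrizione locale} the variety $F(Y)$ has a transversal $D_4$ singularity along the smooth part $\Sigma^{\circ}:=\Sigma\setminus\{q_0,q_1,q_2\}$, and blowing up the singular locus $\Sigma$ corresponds, on the transversal slice, to blowing up the singular point of a $D_4$ surface singularity. Since the blow-up of a $D_4$ rational double point at its singular point produces a central $\mathbb{P}^1$ carrying exactly three $A_1$ singularities, over $\Sigma^{\circ}$ the singular locus of $\hilb^2(\Sigma)$ consists of three sheets of transversal $A_1$ singularities, each mapping isomorphically onto $\Sigma^{\circ}$ and pairwise disjoint over $\Sigma^{\circ}$. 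First I would use this to see that $\Sing(\hilb^2(\Sigma))$ has three two-dimensional components, each birational to $\Sigma$.

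Next I would identify these sheets intrinsically on the Hilbert scheme. For a reduced $\xi=\{a,b\}$ with $a\neq b$ the scheme $\hilb^2(\Sigma)$ is étale-locally $\Sigma\times\Sigma$ near $(a,b)$, hence singular precisely when $a$ or $b$ lies in $\Sing(\Sigma)=\{q_0,q_1,q_2\}$; together with the classical smoothness of $\hilb^2$ at punctual subschemes over smooth points of $\Sigma$, this gives $\Sing(\hilb^2(\Sigma))\subseteq\{\xi:\operatorname{supp}(\xi)\cap\{q_0,q_1,q_2\}\neq\emptyset\}$. I would then set $\widetilde{\Sigma_i}:=\overline{\{\{q_i,b\}:b\in\Sigma\setminus\{q_i\}\}}$, which is irreducible (closure of the image of the irreducible $\Sigma\setminus\{q_i\}$) and, by the slice analysis, is one of the three sheets.

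To prove $\widetilde{\Sigma_i}\simeq\Blow{q_i}{\Sigma}$, note that over $\Sigma\setminus\{q_i\}$ the assignment $b\mapsto\{q_i,b\}$ is an isomorphism onto the reduced part of $\widetilde{\Sigma_i}$, while as $b\to q_i$ along $\Sigma$ the subscheme $\{q_i,b\}$ tends to a punctual length-two subscheme at $q_i$ whose direction lies on the projectivised tangent cone of the node, a conic $\cong\mathbb{P}^1$; this is exactly the exceptional divisor of $\Blow{q_i}{\Sigma}$, the blow-up of the $A_1$ point $q_i$. I would make this precise by the flat family over $\Blow{q_i}{\Sigma}$ given by the closure of the graph of $\Blow{q_i}{\Sigma}\to\Sigma$ together with the constant section at $q_i$: it has length two on every fibre, hence defines a morphism $\Blow{q_i}{\Sigma}\to\hilb^2(\Sigma)$ that is proper, bijective onto $\widetilde{\Sigma_i}$ and birational, so an isomorphism by Zariski's main theorem, $\Blow{q_i}{\Sigma}$ being normal (it resolves only $q_i$, still carrying nodes over $q_j,q_k$).

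The hard part, and what pins down that there are exactly three components, is to show that the punctual subschemes at a node $q_i$ whose direction is \emph{off} the tangent cone are smooth points of $\hilb^2(\Sigma)$. In the local model $\Sigma\simeq\{xz=y^2\}=\mathbb{C}^2/\pm$ near $q_i$, the punctual subschemes form a $\mathbb{P}^2=\mathbb{P}(T_{q_i}\Sigma)$; for a direction on the conic $\{xz=y^2\}$ the subscheme is a limit of the singular reduced locus $\{q_i,b\}$, so $\dim T_\xi\hilb^2(\Sigma)\geq 5$ by upper semicontinuity and $\xi$ is singular, whereas for a direction off the conic a direct computation of $\Hom(I_\xi,\mathcal{O}/I_\xi)$ (for instance $I_\xi=(x,z)$, whose only syzygy is the Koszul one, imposing no condition modulo $I_\xi$) yields $\dim T_\xi\hilb^2(\Sigma)=4$, so $\xi$ is smooth. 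Hence $\Sing(\hilb^2(\Sigma))$ meets the subschemes through $q_i$ exactly in $\widetilde{\Sigma_i}$ and there are no further components. The intersection statement is then immediate: a point of $\widetilde{\Sigma_i}\cap\widetilde{\Sigma_j}$ with $i\neq j$ must contain both $q_i$ and $q_j$ in its support (the punctual parts sit over the distinct points $q_i\neq q_j$), forcing $\xi=\{q_i,q_j\}$, so $\widetilde{\Sigma_i}\cap\widetilde{\Sigma_j}=\{q_i+q_j\}$, as claimed.
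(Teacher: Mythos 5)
Your argument takes a genuinely different route from the paper's, which sandwiches $\Sing(\hilb^2(\Sigma))$ between $hc^{-1}(\Sing(\Sym^2(\Sigma))\setminus\Delta)$ and $hc^{-1}(\Sing(\Sym^2(\Sigma)))$ via the Hilbert--Chow morphism and then quotes the local computations of \cite{yamagishi2018symplectic} for the punctual fibres $hc^{-1}(2q_i)$; most of your steps are correct, including the containment of $\Sing(\hilb^2(\Sigma))$ in the locus of subschemes whose support meets $\{q_0,q_1,q_2\}$, the tangent-space computation $\Hom(I_\xi,\mathcal{O}_\Sigma/I_\xi)\simeq(\mathcal{O}_\Sigma/I_\xi)^{\oplus 2}$ at an off-cone punctual $\xi$ (the only syzygy of $(x,z)$ on $\{xz=y^2\}$ is indeed the Koszul one), and the final intersection statement. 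However, one step fails as written: the application of Zariski's main theorem to conclude $\widetilde{\Sigma_i}\simeq\Bl_{q_i}\Sigma$. You produce a proper, birational, bijective morphism $\Bl_{q_i}\Sigma\to\widetilde{\Sigma_i}$ and invoke ZMT ``$\Bl_{q_i}\Sigma$ being normal''. Normality is required on the wrong side: the form of ZMT you need says that a proper, quasi-finite, birational morphism onto a \emph{normal} variety is an isomorphism. Normality of the source gives nothing --- the normalization of a cuspidal plane cubic is proper, bijective and birational from a smooth curve, yet is not an isomorphism. Since a non-normal (e.g.\ pinched) structure of $\widetilde{\Sigma_i}$ along the punctual locus is precisely the pathology you must exclude, this is a genuine gap and not a formality.

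To close it you must either (a) prove that $\widetilde{\Sigma_i}$, with its reduced structure, is normal, which your argument does not do, or (b) upgrade the morphism to a closed immersion by showing it is unramified along the exceptional curve $L_i$, i.e.\ that its differential at each punctual $\xi_e$, $e\in L_i$, is injective into $T_{\xi_e}\hilb^2(\Sigma)$; this is a second tangent-space computation and is essentially the content of the computation in \cite{yamagishi2018symplectic} that the paper cites instead. Two smaller points should also be repaired: (i) your opening slice analysis does not by itself give three \emph{disjoint} sheets over $\Sigma^{\circ}=\Sigma\setminus\{q_0,q_1,q_2\}$, since the three $A_1$ points of the blown-up transversal $D_4$ slice could a priori be permuted by monodromy along $\Sigma^{\circ}$; this is only repaired by your later intrinsic description via supports, so the slice picture is motivation rather than proof. (ii) Deducing smoothness from $\dim T_\xi=4$ uses that $\hilb^2(\Sigma)$ is irreducible of dimension $4$ (for instance because it is the quotient by the involution of the blow-up of $\Sigma\times\Sigma$ along the diagonal), and covering all off-cone directions by the single computation at $I_\xi=(x,z)$ uses that automorphisms of the $A_1$ germ act transitively on directions off the tangent cone; both facts are true but must be stated.
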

	\begin{proof}
		Consider the Hilbert--Chow morphism $hc:\hilb^2(\Sigma)\to\Sym^2(\Sigma)$. This morphism can be identified with the blow-up along the diagonal $\Delta$, so $$hc^{-1}(\Sing(\Sym^2(\Sigma))\setminus\Delta)\subset\Sing(\hilb^2(\Sigma))\subset hc^{-1}(\Sing(\Sym^2(\Sigma))).$$
		Now, $\Sing(\Sym^2(\Sigma))\setminus \Delta$ consists of cycles where at least one point lies in $\Sing(\Sigma)$, therefore all the length two subschemes $\xi$ such that their support consists of two different points on $\Sigma$ and at least one of them is in $\Sing(\Sigma)$ are also singular points for $\hilb^2(\Sigma)$. Therefore, the remaining points of $\hilb^2(\Sigma)$ which might possibly be singular are those on the fibers over $2q_i$ for $i=0,1,2$. These points are length 2 closed subschemes of $\Sigma$ entirely supported on an isolated singularity of type $A_1$. Therefore, using the computations about $\hilb^2(\Gamma)$ with $\Gamma$ a surface having one isolated singularity of type $A_i$ done in \cite[Section 2.1]{yamagishi2018symplectic}, we can see that $hc^{-1}(2q_i)\cap \Sing(\hilb^2(\Sigma))$ is isomorphic to the exceptional divisor $L_i$ of $\Bl_{q_i}\Sigma$. Thus, the embedding:
		\begin{align*}
			\Sigma & \to \Sing(\Sym^2(\Sigma)) \\
			p & \mapsto p+q_i
		\end{align*}
		induces an embedding:
		\begin{align*}
			\Bl_{q_i}\Sigma & \to \Sing(\hilb^2(\Sigma)) \\
			p\not\in L_i & \mapsto p+q_i \\
			p\in L_i & \mapsto p.
		\end{align*}
		Finally, the point $q_i+q_j$ with $i\neq j$ is a point which is mapped through the two different isomorphisms to $q_i+q_j\in \widetilde{\Sigma_i}$ and $q_j+q_i\in\widetilde{\Sigma_j}$.
	\end{proof}
	\begin{rmk}
		This proposition in particular implies that $\Sing(\hilb^2(\Sigma))$ has three singular points, namely $q_0+q_1$, $q_0+q_2$ and $q_1+q_2$. Indeed, the three connected components $\widetilde{\Sigma_i}\simeq\Bl_{q_i}(\Sigma)$ are smooth on the preimage of $q_i$ under the blow-up as $q_i$ is a singularity of type $A_1$ for $\Sigma$.
	\end{rmk}
	Now, we want to prove that the symplectic resolution $\widetilde{\hilb^2(\Sigma)}\xrightarrow{\psi}\hilb^2(\Sigma)$ is unique.
	\begin{prop}\label{prop: unicity A2}
		The symplectic resolution $\widetilde{\hilb^2(\Sigma)}$ of $\hilb^2(\Sigma)$ is unique up to isomorphism.
	\end{prop}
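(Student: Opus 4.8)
The plan is to show that the symplectic resolution cannot be modified by any birational surgery, exploiting that $\hilb^2(\Sigma)$ is a projective symplectic fourfold. We already know that such a resolution exists, so I would fix two symplectic resolutions $\psi_1,\psi_2$ of $\hilb^2(\Sigma)$. By \cite[Theorem 1.2]{wierzba2002small} any two symplectic resolutions of a projective symplectic fourfold are connected by a finite chain of Mukai flops, and in dimension four each Mukai flop is performed along an irreducible surface isomorphic to $\mathbb{P}^2$ that is contracted to a point; in particular its flopping locus lies over a $0$-dimensional symplectic leaf. Hence it suffices to prove that no symplectic resolution of $\hilb^2(\Sigma)$ carries a contractible $\mathbb{P}^2$ over any $0$-dimensional leaf, for then every connecting chain has length zero and $\psi_1\simeq\psi_2$.

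Next I would pin down the $0$-dimensional leaves. By \Cref{prop: sing A2} and the remark following it, $\Sing(\hilb^2(\Sigma))$ is the union of the three smooth surfaces $\widetilde{\Sigma_i}\simeq\Blow{q_i}{\Sigma}$, and the deepest stratum of the Kaledin stratification, namely the locus of $0$-dimensional leaves, is exactly the set of the three points $q_i+q_j$ with $i\neq j$. Thus a flopping $\mathbb{P}^2$, if any existed, would have to sit over one of these three points, and it is enough to understand the analytic germ of $\hilb^2(\Sigma)$ there.

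Then I would analyse the germ at $q_i+q_j$. Since $q_i$ and $q_j$ are distinct nodes of $\Sigma$, the two support points of a nearby length-two subscheme cannot collide in a small neighbourhood, so $\hilb^2(\Sigma)$ is analytically isomorphic near $q_i+q_j$ to the product of a neighbourhood of $q_i$ and a neighbourhood of $q_j$, that is, to the product $A_1\times A_1$ of two $A_1$-surface singularities endowed with its product symplectic structure. Its minimal resolution $\widetilde{A_1}\times\widetilde{A_1}$ is crepant, and its exceptional locus is the union of two $\mathbb{P}^1$-bundles over the two two-dimensional branches of the singular locus, meeting along the fibre $\mathbb{P}^1\times\mathbb{P}^1$ over the origin. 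As $\mathbb{P}^1\times\mathbb{P}^1$ contains no contractible $\mathbb{P}^2$, this resolution admits no Mukai flop, so by \cite[Theorem 1.2]{wierzba2002small} it is the \emph{unique} symplectic resolution of the germ $A_1\times A_1$.

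Combining these steps, any symplectic resolution of $\hilb^2(\Sigma)$ restricts over a neighbourhood of each $q_i+q_j$ to a symplectic resolution of $A_1\times A_1$, hence has fibre $\mathbb{P}^1\times\mathbb{P}^1$ there and contains no contractible $\mathbb{P}^2$ over any $0$-dimensional leaf. Therefore no Mukai flop can be performed, and \cite[Theorem 1.2]{wierzba2002small} forces $\psi_1\simeq\psi_2$. The point I expect to need the most care is the local step: verifying that the analytic germ at $q_i+q_j$ is genuinely the \emph{product} $A_1\times A_1$, so that the Hilbert--Chow/diagonal contribution decouples and the central fibre is $\mathbb{P}^1\times\mathbb{P}^1$ rather than a surface carrying a $\mathbb{P}^2$, and that $\mathbb{P}^2$-freeness of the exceptional fibres over these isolated points is preserved when passing between the germ and the global fourfold.
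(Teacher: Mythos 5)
Your strategy is sound and its skeleton is the same as the paper's: both arguments rest on \cite[Theorem 1.2]{wierzba2002small} to reduce uniqueness to the non-existence of a contractible $\mathbb{P}^2$, and both come down to identifying the fibre over each point $q_i+q_j$ as $\mathbb{P}^1\times\mathbb{P}^1$. The difference is in how that fibre is computed. The paper stays global and projective: $\widetilde{\hilb^2(\Sigma)}$ and $\hilb^2(\widetilde{\Sigma})$ are both symplectic resolutions of the projective variety $\Sym^2(\Sigma)$, hence connected by Mukai flops; the only contractible planes in $\hilb^2(\widetilde{\Sigma})$ are the $\Pi_i$ lying over the points $2q_i$, and these are disjoint from the $\Lambda_{ij}$, so the flops leave the fibre over $q_i+q_j$ untouched and $\psi^{-1}(q_i+q_j)\simeq\Lambda_{ij}\simeq\mathbb{P}^1\times\mathbb{P}^1$. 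You instead localize: the germ of $\hilb^2(\Sigma)$ at $q_i+q_j$ is a product of two $A_1$-germs (correct, since Hilbert--Chow is an isomorphism off the diagonal and the two support points stay separated), and you appeal to uniqueness of the symplectic resolution of that germ. Your route is more economical---it needs neither the subvarieties $\Pi_i,\Lambda_{ij}$ nor the map $\pi^{[2]}$---and it proves the stronger statement that \emph{every} symplectic resolution of $\hilb^2(\Sigma)$ has $\mathbb{P}^1\times\mathbb{P}^1$ fibres over the three deepest points.

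The step you yourself flag is, however, a real caveat rather than a formality: \cite[Theorem 1.2]{wierzba2002small} concerns projective symplectic resolutions of a projective fourfold (this is exactly how it is used elsewhere in the paper, over $\Sym^2(\Sigma)$ and over $\hilb^2(\Sigma)$), so quoting it for the analytic germ $A_1\times A_1$ is not a literal application. To close this you would need either a local/relative version of the flop-connectedness theorem (say for the affine model $(\mathbb{C}^2/\{\pm 1\})\times(\mathbb{C}^2/\{\pm 1\})$, together with a comparison between algebraic resolutions of that model and the analytic restriction of a global resolution), or an independent argument that the product of the two minimal resolutions is the unique symplectic resolution of a product of $A_1$-singularities. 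The paper's detour through $\Sym^2(\Sigma)$ exists precisely to avoid leaving the projective setting. Two smaller points: the components $\widetilde{\Sigma_i}\simeq\Blow{q_i}{\Sigma}$ of $\Sing(\hilb^2(\Sigma))$ are not smooth (each still carries two $A_1$-points; that is how the three points $q_i+q_j$ arise as the deepest stratum), and your claim that a flopped $\mathbb{P}^2$ must lie over a zero-dimensional leaf uses the semismallness of symplectic resolutions (fibres over the two-dimensional leaves are at most curves), which deserves an explicit justification or citation.
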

	\begin{proof}
		We want to use again \cite[Theorem 1.2]{wierzba2002small} and prove that the central fiber does not contain components isomorphic to $\mathbb{P}^2$.
		Recall that, as they are both symplectic resolutions of $\Sym^2(\Sigma)$, the variety $\widetilde{\hilb^2(\Sigma)}$ is obtained by $\hilb^2(\widetilde{\Sigma})$ via a sequence of Mukai flops $\mu:\hilb^2(\widetilde{\Sigma})\dashrightarrow\widetilde{\hilb^2(\Sigma)}$ performed over the 2-dimensional fibers of $\hilb^2(\widetilde{\Sigma})\to\Sym^2(\Sigma)$. Moreover, the birational map $\pi^{[2]}$ restricts to an isomorphism between $$\hilb^2(\widetilde{\Sigma})\setminus(\bigcup\Pi_i\cup\bigcup\Lambda_{ij})$$ and $$\hilb^2(\Sigma)\setminus (\bigcup hc^{-1}(2q_i)\cup\bigcup hc^{-1}(q_i+q_j))$$ with $i\neq j$. Here we denoted by $\Pi_i\simeq \mathbb{P}^2$ and  $\Lambda_{ij}\simeq\mathbb{P}^1\times\mathbb{P}^1$  the subspaces of $\hilb^2(\widetilde{\Sigma})$ which are, respectively, $hc^{-1}(2L_i)$ and $hc^{-1}(L_i+L_j)$. 
		The central fiber is $\psi^{-1}(q_i+q_j)\simeq\Lambda_{ij}\simeq\mathbb{P}^1\times\mathbb{P}^1$ and, thus, it does not contain any component isomorphic to $\mathbb{P}^2$.
	\end{proof}
	In analogy with \Cref{sec: A3A4} we want to prove that $\widetilde{\hilb^2(\Sigma)}$ is obtained by $\hilb^2(\Sigma)$ via successive blow-ups on the singular loci. Let $S:=\Sing(\hilb^2(\Sigma))$. 
	\begin{lemma}
		The blow-up map $\beta:\Bl_S\hilb^2(\Sigma)\to\hilb^2(\Sigma)$ is crepant.
	\end{lemma}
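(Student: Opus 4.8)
The plan is to verify crepancy by computing discrepancies, exploiting that $\hilb^2(\Sigma)$ is a symplectic variety and hence, being a symplectic variety as established via \Cref{prop:Samuel-Lehn}, has rational Gorenstein singularities with trivial canonical class $K_{\hilb^2(\Sigma)}\simeq\mathcal{O}$. Thus $\beta^*K_{\hilb^2(\Sigma)}$ is trivial and it suffices to show that every $\beta$-exceptional prime divisor has discrepancy zero. Since the discrepancy of a prime divisor is determined at its generic point, I would stratify the center $S$ according to \Cref{prop: sing A2}: its two-dimensional part, consisting of the smooth loci of the three components $\widetilde{\Sigma_i}\simeq\Blow{q_i}{\Sigma}$ minus the three crossing points $q_i+q_j$, and its zero-dimensional part, the three points $q_i+q_j$. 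The strategy is to compute the local analytic model of $\hilb^2(\Sigma)$ and of the blow-up $\beta$ along each stratum, read off the discrepancies over the two-dimensional stratum, and show that the zero-dimensional stratum produces no new exceptional divisor.

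Over the two-dimensional leaf I would argue that $\hilb^2(\Sigma)$ has transversal $A_1$ singularities: away from the special points, a length-two subscheme in the center has support at the $A_1$ point $q_i$ and at a distinct smooth point, so locally $\hilb^2(\Sigma)\simeq\Sigma\times\Sigma$ looks like $(\mathbb{C}^2,0)\times\{xy=z^2\}$; the same holds along the exceptional curve of $\Blow{q_i}{\Sigma}$ by Yamagishi's computation \cite[Section 2.1]{yamagishi2018symplectic} already used in \Cref{prop: sing A2}. Blowing up the transversal singular locus of $\mathbb{C}^2\times\{xy=z^2\}$ is, fibrewise over $\mathbb{C}^2$, a single blow-up of the node, i.e. the minimal resolution of the $A_1$ surface singularity, which is crepant. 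Hence each exceptional prime divisor whose generic point lies over the two-dimensional leaf has discrepancy zero.

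It remains to treat the three special points, which is the delicate step. At $q_i+q_j$ the two support points are distinct $A_1$ points of $\Sigma$, so $\hilb^2(\Sigma)$ is locally isomorphic to the product $S_1\times S_2$ of two surface $A_1$-singularities $S_\ell=\{xy=z^2\}$, and the singular locus $S$ becomes the reduced union $(\{0\}\times S_2)\cup(S_1\times\{0\})$. I would compute this blow-up directly: the nine generators of the ideal of the center are the products of the two maximal ideals, so the blow-up map factors through the Segre $\Pn^2\times\Pn^2$ and the image in each factor is constrained to the conic $\{xy=z^2\}\simeq\Pn^1$. Consequently $\Blow{S}{(S_1\times S_2)}\simeq\widetilde{S_1}\times\widetilde{S_2}$, the product of the two minimal resolutions, which is smooth, crepant (its canonical class is trivial, being the external product of the trivial canonical classes of the $\widetilde{S_\ell}$), and whose fibre over $q_i+q_j$ is $\Pn^1\times\Pn^1$, in agreement with \Cref{prop: unicity A2}. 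In particular this fibre has dimension two, hence codimension two in the fourfold, so no $\beta$-exceptional prime divisor maps to a point $q_i+q_j$.

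Combining the two local analyses, every $\beta$-exceptional prime divisor dominates one of the components $\widetilde{\Sigma_i}$, its generic point lies over the two-dimensional leaf where the transversal singularity is $A_1$, and there its discrepancy is zero; meanwhile the zero-dimensional leaves contribute only codimension-two fibres and thus no extra exceptional divisor. Therefore $K_{\Blow{S}{\hilb^2(\Sigma)}}=\beta^*K_{\hilb^2(\Sigma)}$ and $\beta$ is crepant. The main obstacle is precisely the identification of the blow-up at the crossing points $q_i+q_j$ with the product of minimal resolutions, which the Segre factorisation above makes transparent.
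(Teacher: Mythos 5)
Your proof is correct, and its backbone is the same as the paper's: both arguments reduce crepancy to triviality of the canonical bundle of $\Blow{S}{\hilb^2(\Sigma)}$ (using that $K_{\hilb^2(\Sigma)}$ is trivial by \Cref{prop:Samuel-Lehn}), both rest on the transversal $A_1$ model $(\mathbb{C}^2,0)\times\{xy=z^2\}$ along the two-dimensional stratum of $S$, where blowing up is fibrewise the minimal resolution of the node and hence crepant, and both dispose of the three crossing points $q_i+q_j$ by the observation that their fibres are $2$-dimensional, hence of codimension two in the fourfold. Where you diverge is in how much geometry you extract at the crossing points: the paper does nothing there beyond the codimension count, concluding $K_X=0$ from normality and triviality of $K_X$ off a codimension-two set (and cites \cite[Proposition 4.2]{perroni2007chenruan} for the transversal stratum, where you instead invoke the product structure of $\hilb^2$ away from the diagonal together with Yamagishi's computation along $L_i$ --- both justifications are fine). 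Your additional identification $\Blow{S}{(S_1\times S_2)}\simeq\widetilde{S_1}\times\widetilde{S_2}$ via the factorisation through the Segre embedding, using that the reduced ideal of $(\{0\}\times S_2)\cup(S_1\times\{0\})$ is the product ideal, is correct and buys something the paper's proof does not: it shows directly that $\Blow{S}{\hilb^2(\Sigma)}$ is already \emph{smooth} near the crossing points, with central fibre $\Pn^1\times\Pn^1$, which anticipates both \Cref{lemma:symplectic A2} and the subsequent proposition identifying $\Blow{S}{\hilb^2(\Sigma)}$ with the symplectic resolution $\widetilde{\hilb^2(\Sigma)}$, and it matches the fibre description in \Cref{prop: unicity A2}. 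So your argument is a strict refinement: slightly longer, but it packages information the paper spreads over the next two statements.
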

	\begin{proof}
		As the canonical bundle of $\hilb^2(\Sigma)$ is trivial the statement is equivalent to ask that $X:=\Bl_S\hilb^2(\Sigma)$ has trivial canonical bundle. First, note that $$\dim(\beta^{-1}(q_i+q_j))=\text{codim}(\beta^{-1}(q_i+q_j))=2$$ so if we prove that $K_X$ is trivial on $X\setminus \bigcup\beta^{-1}(q_i+q_j) $ then as $X$ is normal and irreducible we get $K_X=0$. By \Cref{prop:descrizione locale} and \Cref{prop: sing A2}, $\hilb^2(\Sigma)$ at each point of $\widetilde{\Sigma_i}\setminus{q_i+q_j}$ admits a local description as $\mathbb{C}^2\times \Gamma$ with $\Gamma$ a surface with a singularity of type $A_1$. Therefore, as shown in \cite[Proposition 4.2]{perroni2007chenruan}, the blow-up is locally isomorphic to $\widetilde{\Gamma}\times\mathbb{C}^2$,  where $\widetilde{\Gamma}$ denotes the minimal resolution of $\Gamma$, which is crepant as $\widetilde{\Gamma}\to\Gamma$ is so. 
	\end{proof}
	We can now prove the following proposition.
	\begin{lemma}\label{lemma:symplectic A2}
		$\Bl_S\hilb^2(\Sigma)$ is a symplectic variety.
	\end{lemma}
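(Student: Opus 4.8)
The plan is to verify the three hypotheses of Namikawa's criterion \cite[Theorem 6]{Namikawa}, exactly as in the proof of \Cref{prop:Samuel-Lehn}: writing $X:=\Bl_S\hilb^2(\Sigma)$, it suffices to show that $X$ is normal, that it has rational Gorenstein singularities, and that its regular locus $X^{\reg}$ carries an everywhere non-degenerate closed holomorphic $2$-form. The preceding lemma already supplies the essential input, namely that $\beta\colon X\to\hilb^2(\Sigma)$ is crepant, so that $K_X=\beta^*K_{\hilb^2(\Sigma)}=0$; in particular $K_X$ is trivial Cartier and $X$ is Gorenstein.

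First I would locate $\Sing(X)$ and deduce normality. By \Cref{prop:descrizione locale} together with \Cref{prop: sing A2}, at every point of $\widetilde{\Sigma_i}\setminus\{q_i+q_j\}$ the variety $\hilb^2(\Sigma)$ is analytically a product $\mathbb{C}^2\times\Gamma$ with $\Gamma$ a surface carrying a single $A_1$ singularity; as in the preceding lemma the blow-up $\beta$ is then locally $\mathrm{id}_{\mathbb{C}^2}\times(\widetilde{\Gamma}\to\Gamma)$, so $X$ is smooth along these loci. Hence $\Sing(X)\subseteq\bigcup_{i<j}\beta^{-1}(q_i+q_j)$, a union of three fibres of codimension $2$, and in particular $X$ is regular in codimension one. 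Combined with the fact that $X$ is Gorenstein, hence Cohen--Macaulay and thus $S_2$, Serre's criterion yields the normality of $X$.

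Next I would establish that the singularities are rational and produce the symplectic form. Since $\hilb^2(\Sigma)$ is a symplectic variety it has canonical (rational Gorenstein) singularities; comparing discrepancies along a common resolution and using the crepancy of $\beta$ shows that $X$ again has canonical singularities, whence rational singularities by the Elkik--Flenner theorem (\cite[Section 3, page 363]{Reid1985YoungPG}). For the $2$-form, I would take the reflexive pullback $\beta^{[*]}\omega$ of a symplectic form $\omega$ on $\hilb^2(\Sigma)^{\reg}$; this is a reflexive $2$-form on $X$, closed because $X$ has rational singularities (\cite[Theorem 1.13]{KebekusSchnell}), and its restriction to $X^{\reg}$ is non-degenerate. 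Indeed it is non-degenerate on the locus where $\beta$ is an isomorphism, and also on the smooth part of $X$ lying over $S\setminus\{q_i+q_j\}$, since there the local product description reduces non-degeneracy to the fact that the minimal resolution of an $A_1$ surface singularity is symplectic. Namikawa's criterion then applies and gives that $X$ is a symplectic variety.

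The main obstacle is the behaviour over the three points $q_i+q_j$, where $X$ is genuinely singular and the clean product model $\mathbb{C}^2\times\Gamma$ breaks down, so neither normality nor the extension of the form can be read off a local resolution there. The point that makes the argument go through is precisely that $\bigcup_{i<j}\beta^{-1}(q_i+q_j)$ has codimension $2$: it is disjoint from $X^{\reg}$, so constructing the symplectic form only requires the analysis on the smooth locus above, while normality follows from $R_1$ together with the Gorenstein (hence $S_2$) property, without any direct computation at these points.
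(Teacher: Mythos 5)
Your strategy---verifying Namikawa's criterion directly for $\beta$ instead of reducing to \Cref{prop:Samuel-Lehn} via a map to an IHS manifold---is genuinely different from the paper's, but as written it has a real gap, and it sits exactly at the three points $q_i+q_j$ that you yourself flag as the main obstacle. You dispose of them by asserting that $\bigcup_{i<j}\beta^{-1}(q_i+q_j)$ is \emph{disjoint} from $X^{\reg}$. This is unjustified: the local-model analysis gives only the inclusion $\Sing(X)\subseteq\bigcup_{i<j}\beta^{-1}(q_i+q_j)$, not equality, and the claim is in fact false---near $q_i+q_j$ the variety $\hilb^2(\Sigma)$ is analytically a product of two $A_1$-surface germs, the blow-up is smooth there, and indeed the paper later proves that $\Bl_S\hilb^2(\Sigma)$ is smooth and isomorphic to $\widetilde{\hilb^2(\Sigma)}$. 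So these fibres consist of points of $X^{\reg}$, where Namikawa's criterion demands non-degeneracy of $\beta^{[*]}\omega$, and nothing in your argument provides it. Nor does the standard degeneracy-divisor trick close the hole by itself: $\omega^{\wedge 2}$ is a section of the trivial bundle $K_{X^{\reg}}$, so its zero locus is a divisor or empty, but since $\beta$ is an isomorphism only off its exceptional divisor (codimension one), this leaves open the possibility of a degeneracy component contracted to some $q_i+q_j$; excluding that again requires information about $X$ over these points (e.g.\ that the fibres there are $2$-dimensional). This is precisely what the paper's proof is engineered to avoid: it composes $\beta$ with $\psi^{-1}$, where $\psi:\widetilde{\hilb^2(\Sigma)}\to\hilb^2(\Sigma)$ is the symplectic resolution, and invokes \cite[Lemma 2.3 (i)]{kaledin2001symplectic}, which upgrades crepancy of $\beta$ to the statement that $f=\psi^{-1}\circ\beta$ is defined and injective off a closed subset of codimension at least $2$; then \Cref{prop:Samuel-Lehn} applies, and its internal divisor argument works exactly because there the non-isomorphism locus has codimension at least $2$.

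A secondary problem is your normality argument. Triviality of $K_X$ as a Cartier divisor does not make $X$ Gorenstein: Gorenstein means Cohen--Macaulay with invertible dualizing sheaf, and it is the Cohen--Macaulay property that is missing, so you cannot deduce $S_2$ this way. Moreover, the crepancy lemma you feed into this step already presupposes that $X$ is normal (crepancy is defined in the paper only for maps between normal varieties), so the reasoning is circular. Admittedly the paper also takes normality of the blow-up for granted when applying \Cref{prop:Samuel-Lehn}, so this gap is shared; but your proposed derivation of it is not a proof. Both of your gaps would be repaired either by the explicit local computation at $q_i+q_j$ (the blow-up of the product of two $A_1$-germs along the union of the two singular surfaces), or, more economically, by following the paper's route through Kaledin's lemma.
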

	\begin{proof}
		Consider the symplectic resolution $\psi:\widetilde{\hilb^2(\Sigma)}\to\hilb^2(\Sigma)$ and the composition of birational maps $f:=\psi^{-1}\circ\beta:\Bl_S\hilb^2(\Sigma)\dashrightarrow\widetilde{\hilb^2(\Sigma)}$. As $\beta$ is crepant  the map $f$ is defined and injective on a complement to a closed subset $Z\subset \Bl_S\hilb^2(\Sigma)$ of codimension $\text{codim}(Z)\geq 2$ by \cite[Lemma 2.3 (i)]{kaledin2001symplectic}. Then $\Bl_S\hilb^2(\Sigma)$ has only symplectic singularities follows from \Cref{prop:Samuel-Lehn}.
	\end{proof}
	Indeed we can say more. As it turns out $\Bl_S\hilb^2(\Sigma)$ is smooth and it is the symplectic resolution of $\widetilde{\hilb^2(\Sigma)}$.
	\begin{prop}
		The symplectic resolution $\widetilde{\hilb^2(\Sigma)}$ of $\hilb^2(\Sigma)$ is isomorphic to $\Bl_S\hilb^2(\Sigma)$.
	\end{prop}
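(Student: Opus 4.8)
The plan is to reduce everything to showing that $\Bl_S\hilb^2(\Sigma)$ is smooth. Granting this, the preceding lemma (that $\beta$ is crepant) exhibits $\beta\colon\Bl_S\hilb^2(\Sigma)\to\hilb^2(\Sigma)$ as a crepant resolution of the symplectic variety $\hilb^2(\Sigma)$, hence as a symplectic resolution, since crepant and symplectic resolutions coincide; then \Cref{prop: unicity A2} forces $\Bl_S\hilb^2(\Sigma)\simeq\widetilde{\hilb^2(\Sigma)}$. Smoothness being a local analytic property, I would split the analysis according to \Cref{prop: sing A2}: the singular locus $S$ is the union of the three surfaces $\widetilde{\Sigma_i}$, which are smooth away from the three crossing points $q_i+q_j$, the latter being the only singular points of $S$. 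Along the generic part $\widetilde{\Sigma_i}\setminus\{q_i+q_j\}$ the computation already used to prove $\beta$ crepant shows, via \Cref{prop:descrizione locale} and \Cref{prop: sing A2}, that $\hilb^2(\Sigma)$ is analytically $\mathbb{C}^2\times\Gamma$ with $\Gamma$ an $A_1$ surface germ; hence $\Bl_S\hilb^2(\Sigma)$ is locally $\mathbb{C}^2\times\widetilde{\Gamma}$, which is smooth because the minimal resolution $\widetilde{\Gamma}$ of an $A_1$ singularity is smooth.

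The hard part will be the three crossing points $q_i+q_j$, where $S$ is itself singular because the branches $\widetilde{\Sigma_i}$ and $\widetilde{\Sigma_j}$ meet. The key observation I would exploit is that $q_i+q_j$ is a length-two subscheme supported at two \emph{distinct} points of $\Sigma$, so $\hilb^2(\Sigma)$ is locally a product $(\Sigma,q_i)\times(\Sigma,q_j)$ of two $A_1$ surface germs, under which the two branches of $S$ become $S_1=\{q_i\}\times(\Sigma,q_j)$ and $S_2=(\Sigma,q_i)\times\{q_j\}$. Writing $R_1,R_2$ for the two local rings and $\mathfrak{m}_1,\mathfrak{m}_2$ for their maximal ideals, I would first establish the ideal identity $I_S=I_{S_1}\cap I_{S_2}=(\mathfrak{m}_1\otimes R_2)\cap(R_1\otimes\mathfrak{m}_2)=\mathfrak{m}_1\otimes\mathfrak{m}_2=I_{S_1}\cdot I_{S_2}$, the crucial middle equality holding because the two ideals sit in complementary tensor factors (a flatness argument over $\mathbb{C}$). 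Then I would blow up this product ideal factorwise: its Rees algebra $\bigoplus_n(\mathfrak{m}_1\mathfrak{m}_2)^n=\bigoplus_n\mathfrak{m}_1^n\otimes\mathfrak{m}_2^n$ is precisely the Segre product of the two Rees algebras, so $\Bl_S\hilb^2(\Sigma)\simeq\Bl_{\mathfrak{m}_1}(\Sigma,q_i)\times\Bl_{\mathfrak{m}_2}(\Sigma,q_j)$. Since blowing up the maximal ideal of an $A_1$ singularity yields its smooth minimal resolution, both factors are smooth; the exceptional fibre over $q_i+q_j$ is then the product of the two $(-2)$-curves, that is $\mathbb{P}^1\times\mathbb{P}^1\simeq\Lambda_{ij}$, in agreement with \Cref{prop: unicity A2}.

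Combining the two local descriptions shows that $\Bl_S\hilb^2(\Sigma)$ is smooth everywhere, which (together with the first paragraph) concludes. I expect the main obstacle to be making the two ideal-theoretic identities rigorous in the analytic-local category — in particular the passage from $I_{S_1}\cap I_{S_2}$ to the product ideal $\mathfrak{m}_1\otimes\mathfrak{m}_2$ and the Segre-product description of the blow-up — together with a careful verification that $\hilb^2(\Sigma)$ does decompose as the product $(\Sigma,q_i)\times(\Sigma,q_j)$ in a neighbourhood of $q_i+q_j$.
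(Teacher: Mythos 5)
Your proof is correct, but it takes a genuinely different route from the paper's. The paper never proves directly that $\Bl_S\hilb^2(\Sigma)$ is smooth: arguing as in \Cref{lemma: risoluzioni simplettiche}, it factors the symplectic resolution $\psi\colon\widetilde{\hilb^2(\Sigma)}\to\hilb^2(\Sigma)$ through $\beta$ by the universal property of the blow-up, compares relative Picard numbers ($\psi$ has relative Picard number $3$ because $\Pic\bigl(\widetilde{\hilb^2(\Sigma)}\bigr)\simeq\langle 6\rangle\oplus D_4(-1)$ by \cite{libassi2023git}, while $\beta$ has at least $3$ by \Cref{prop: sing A2}) to see that the induced map $\psi'$ is a small symplectic contraction, and then rules out everything but an isomorphism via \cite[Theorem 1.1]{wierzba2002small}, using that the two-dimensional fibres of $\psi$ contain no planes. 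You instead prove smoothness of the blow-up by hand and conclude by crepancy plus \Cref{prop: unicity A2}; the crossing-point analysis is your new ingredient, and it is sound. The technical worries you raise are real but manageable: near $q_i+q_j$ the Hilbert--Chow morphism is an isomorphism and $\Sigma\times\Sigma\setminus\Delta\to\Sym^2(\Sigma)\setminus\Delta$ is \'etale, so smoothness may be checked after base change to a product $U\times V$ of affine neighbourhoods of $q_i$ and $q_j$, where the identity $I_{S_1}\cap I_{S_2}=I_{S_1}\cdot I_{S_2}$ is honest linear algebra over $\mathbb{C}$ (the two ideals sit in complementary tensor factors of $\mathcal{O}(U)\otimes_{\mathbb{C}}\mathcal{O}(V)$), and the blow-up of the product ideal is computed as two successive blow-ups, each a flat base change, giving $\Bl_{q_i}U\times\Bl_{q_j}V$. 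Comparing what each approach buys: yours is more self-contained and more informative --- it exhibits the explicit local structure of $\beta$ everywhere, recovers the fibres $\mathbb{P}^1\times\mathbb{P}^1$ over the crossings, in fact reproves crepancy of $\beta$ there (where the paper's crepancy lemma only extends the triviality of the canonical class across a codimension-two set), and needs neither the lattice computation imported from \cite{libassi2023git} nor \cite[Theorem 1.1]{wierzba2002small}. The paper's route, on the other hand, avoids all local analysis at the points where $S$ itself is singular --- exactly where \Cref{prop:descrizione locale} gives no information --- and obtains smoothness of $\Bl_S\hilb^2(\Sigma)$ as a corollary of the isomorphism rather than as an input. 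Note that you still rely on \Cref{prop: unicity A2}, hence on \cite[Theorem 1.2]{wierzba2002small}, so neither route is free of the Wierzba--Wi\'sniewski machinery.
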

	\begin{proof}
		In order to prove that $\psi:\widetilde{\hilb^2(\Sigma)}\to \hilb^2(\Sigma)$ factors through $\Bl_S\hilb^2(\Sigma)$ we want to use the same argument of \ref{lemma: risoluzioni simplettiche}. With the same argument the exceptional locus of $\psi$ must be a Cartier divisor contracted to the singular locus $S$ of $\hilb^2(\Sigma)$. Then, by the universal property of the blow-up the map $\psi$ factors through $\beta:\Bl_S\hilb^2(\Sigma)\to\hilb^2(\Sigma)$ via a map $\psi':\widetilde{\hilb^2(\Sigma)}\to\Bl_S\hilb^2(\Sigma)$. The Picard group of $\widetilde{\hilb^2(\Sigma)}$ is isomorphic to the Picard group of $\hilb^2(\widetilde{\Sigma})$ as they are two symplectic resolutions of the same symplectic variety $\Sym^2(\Sigma)$. Moreover, by \cite[Section 7.4]{libassi2023git}, it is $\Pic(\widetilde{\hilb^2(\Sigma)})\simeq D_4(-1)\oplus\langle 6\rangle$. Therefore, $\psi:\widetilde{\hilb^2(\Sigma)}\to\hilb^2(\Sigma)$ has relative Picard number $3$. By \Cref{prop: sing A2} we deduce that  $\beta:\Bl_S\hilb^2(\Sigma)\to\hilb^2(\Sigma)$ has at least relative Picard number $3$, but, as proven before, the resolution $\psi$ factors through $\beta$, thus, by confrontation of the relative Picard groups we deduce that $\psi'$ is a small symplectic contraction (see \cite[Definition 2]{wierzba2002small}). Therefore, by \cite[Theorem 1.1]{wierzba2002small}, if it is not an isomorphism it can be either a sequence of Mukai flops or a contraction of some planes. Both cases are impossible as $\psi$ factors through $\psi'$ and in the 2-dimensional fibres of $\psi$ there exist no planes. 
	\end{proof}
	Now, we are interested in the presence of an automorphism on $\widetilde{\hilb^2(\Sigma)}$. Indeed, with the same argument of \Cref{sec: A3A4}, we can induce an automorphism $\widetilde{\sigma^{[2]}}$ on $\widetilde{\hilb^2(\Sigma)}$. We are interested now in the action of $\widetilde{\sigma^{[2]}}$ in cohomology. First note that by \cite[Proposition 6.3]{libassi2023git} the automorphism $\sigma$ on $\Sigma$ induces also a natural automorphism $\tau_2$ on $\hilb^2(\widetilde{\Sigma})$, where by $\widetilde{\Sigma}$ we denote the minimal resolution of $\Sigma$. As both $\sigma^{[2]}$ and $\tau_2$ are natural automorphisms induced by $\sigma$ we can see that $\pi^{[2]}\circ\tau_2=\sigma^{[2]}\circ\pi^{[2]}$ (see \Cref{sec:blowuppa} for the definition of $\pi^{[2]}$).
	\begin{prop}\label{prop: automorphismA2}
		The automorphism $\widetilde{\sigma^{[2]}}$ has the same action of $\tau_2$ in cohomology.
	\end{prop}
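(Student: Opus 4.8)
The plan is to produce a single birational map between the two IHS manifolds carrying the two automorphisms, to check that this map intertwines them, and then to exploit that a birational map between IHS manifolds is an isomorphism in codimension one in order to transport the action on $H^2$ faithfully.

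First I would record the two equivariance relations at my disposal. By the very construction of $\widetilde{\sigma^{[2]}}$ --- obtained by lifting $\sigma^{[2]}$ through the successive blow-ups and hence commuting with the symplectic resolution $\psi\colon\widetilde{\hilb^2(\Sigma)}\to\hilb^2(\Sigma)$ --- one has $\psi\circ\widetilde{\sigma^{[2]}}=\sigma^{[2]}\circ\psi$, while the identity recalled just before the statement gives $\pi^{[2]}\circ\tau_2=\sigma^{[2]}\circ\pi^{[2]}$. Setting $\mu:=\psi^{-1}\circ\pi^{[2]}\colon\hilb^2(\widetilde{\Sigma})\dashrightarrow\widetilde{\hilb^2(\Sigma)}$ (the same birational map as in \Cref{prop: unicity A2}), the relation $\psi\circ\widetilde{\sigma^{[2]}}=\sigma^{[2]}\circ\psi$ rewrites as $\widetilde{\sigma^{[2]}}\circ\psi^{-1}=\psi^{-1}\circ\sigma^{[2]}$, and the two identities combine into $\mu\circ\tau_2=\widetilde{\sigma^{[2]}}\circ\mu$ as birational maps; thus $\mu$ is equivariant.

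Next I would pass to cohomology. From the proof of \Cref{prop: unicity A2}, $\mu$ is a sequence of Mukai flops between two IHS manifolds of $K3^{[2]}$-type, performed over the two-dimensional fibres, hence an isomorphism away from closed subsets of codimension at least $2$ on each side. Restriction to the common open locus therefore identifies $H^2(\hilb^2(\widetilde{\Sigma}),\mathbb{Z})$ and $H^2(\widetilde{\hilb^2(\Sigma)},\mathbb{Z})$ through a Hodge isometry $\mu^*$ for the Beauville--Bogomolov forms. Pulling back $\mu\circ\tau_2=\widetilde{\sigma^{[2]}}\circ\mu$ yields $\tau_2^*\circ\mu^*=\mu^*\circ(\widetilde{\sigma^{[2]}})^*$, that is $\tau_2^*=\mu^*\circ(\widetilde{\sigma^{[2]}})^*\circ(\mu^*)^{-1}$. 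Hence, under the canonical identification $\mu^*$, the two automorphisms act identically on second cohomology, which is exactly the claim.

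The step requiring the most care is the second one: I must justify that $\mu$ really is an isomorphism in codimension one, so that $\mu^*$ is a well-defined Hodge isometry, and that the birational equivariance genuinely descends to $H^2$. Both are controlled by the explicit Mukai-flop description of $\mu$ coming from \Cref{prop: unicity A2}, whose indeterminacy loci have codimension $\geq 2$; the equivariance then survives because the restriction isomorphisms on $H^2$ are compatible with pullback along the \emph{regular} automorphisms $\tau_2$ and $\widetilde{\sigma^{[2]}}$, which are defined everywhere.
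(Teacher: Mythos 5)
Your proof is correct, but it takes a genuinely different route from the paper's. The paper never writes down an equivariant birational map: it reduces the statement to a comparison of fixed loci, invoking \cite[Corollary 7.5]{BCS_Kyoto}, by which the cohomological action of such a non-symplectic order-three automorphism on a $K3^{[2]}$-type manifold is uniquely determined by its fixed locus. The geometric content there is that $\sigma$ cyclically permutes $q_0,q_1,q_2$, hence $\tau_2$ cyclically permutes the exceptional curves $L_i$ and $\sigma^{[2]}$ cyclically permutes the components $\widetilde{\Sigma_i}$ of $\Sing(\hilb^2(\Sigma))$; consequently $\Fix(\tau_2)$ avoids the flopped loci $\Pi_i,\Lambda_{ij}$, $\Fix(\widetilde{\sigma^{[2]}})$ avoids the exceptional divisor of $\beta$, and both are identified with $\Fix(\sigma^{[2]})$ through $\pi^{[2]}$ and $\beta$ respectively. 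You instead combine the two equivariance relations into $\mu\circ\tau_2=\widetilde{\sigma^{[2]}}\circ\mu$ for $\mu=\psi^{-1}\circ\pi^{[2]}$, and then use that a birational map between IHS manifolds is an isomorphism in codimension one and induces an isometry of Beauville--Bogomolov lattices compatible with Hodge structures (a standard result of Huybrechts \cite{Huybrechts2003CompactHM}, or, as you note, the Mukai-flop description from \Cref{prop: unicity A2}), obtaining $\tau_2^*=\mu^*\circ(\widetilde{\sigma^{[2]}})^*\circ(\mu^*)^{-1}$. Both arguments rest on the same inputs recorded in the paper, namely $\pi^{[2]}\circ\tau_2=\sigma^{[2]}\circ\pi^{[2]}$ and the characterization of $\widetilde{\sigma^{[2]}}$ as the unique lift of $\sigma^{[2]}$ through $\beta$, so your starting point is legitimate; your functoriality check (restriction to codimension-$\geq 2$ complements commutes with pullback along the regular automorphisms) is exactly the point needing care and you handle it correctly. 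What your route buys: it is more self-contained, avoiding both the classification result of \cite{BCS_Kyoto} and any fixed-point analysis, and it exhibits an explicit Hodge isometry conjugating the two actions, which is all that \Cref{thm: A2} needs. What the paper's route buys: a concrete geometric description of $\Fix(\widetilde{\sigma^{[2]}})$ itself (isomorphic to $\Fix(\sigma^{[2]})$, disjoint from all exceptional loci), information your argument does not produce.
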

	\begin{proof}
		In order to prove this it is enough to show that the respective fixed loci are isomorphic, as by \cite[Corollary 7.5]{BCS_Kyoto} the action of such automorphism is uniquely determined by the fixed locus. By the description of $\tau_2$ made in \cite[Section 7.4]{libassi2023git} we can see that it maps $L_i$ to $L_{i+1}$ mod 3. Moreover, $\sigma$ maps $q_i$ to $q_{i+1}$ mod 3. Therefore, $\Fix(\tau_2)\subset\hilb^2(\widetilde{\Sigma})\setminus(\bigcup\Pi_i\cup\bigcup\Lambda_{ij})$ is mapped isomorphically through $\pi^{[2]}$ to $\Fix(\sigma^{[2]})\subset\hilb^2(\Sigma)\setminus (\bigcup hc^{-1}(2q_i)\cup\bigcup hc^{-1}(q_i+q_j))$. So we are left to prove that $\Fix(\sigma^{[2]})\simeq\Fix(\widetilde{\sigma^{[2]}})$. The automorphism $\widetilde{\sigma^{[2]}}$ is defined as the only automorphism such that $\beta\circ\widetilde{\sigma^{[2]}}=\sigma^{[2]}\circ\beta$. Then as $\sigma$ maps $q_i$ to $q_{i+1}$ mod 3 it is immediate to see that $\sigma^{[2]}$ maps the irreducible component in the singular locus $\widetilde{\Sigma_i}$ to $\widetilde{\Sigma_{i+1}}$ mod 3. So $\Fix(\sigma^{[2]})\simeq\Fix(\widetilde{\sigma^{[2]}})$.
	\end{proof}
	Then we can state the following theorem.
	\begin{thm}\label{thm: A2}
		Let $C_2$ be a complex projective cubic threefold having one isolated singularity of type $A_2$ and let $Y_2$ be its associated cyclic cubic fourfold. Assume that there exist no plane $\Pi\subset Y$ such that $\Pi\cap\Sing(Y_i)\neq\emptyset$. Then the Fano variety of lines $F(Y_2)$ of $Y_2$ admits a unique symplectic resolution by an IHS manifold of $K3^{[2]}$-type $\widetilde{F(Y_2)}$. \\
		Moreover, there exists an integral lattice $T_i$, defined below, such that:\begin{enumerate}[i)]
			\item $\Pic\left(\widetilde{F(Y_i)}\right)\simeq \langle 6\rangle \oplus D_4(-1)$;
			\item there exists a non-symplectic automorphism $\tau_2\in\Aut\left(\widetilde{F(Y_i)}\right)$ whose invariant sublattice is $H^2(\widetilde{F(Y_i)}, \mathbb{Z})^{\tau_2^*}\simeq \langle 6\rangle \oplus A_2(-1)$
		\end{enumerate}
	\end{thm}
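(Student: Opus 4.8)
The plan is to assemble the results established above for the $A_2$ case exactly as was done in the proof of \Cref{teoremone}, the only new inputs being the $A_2$-specific singularity and lattice computations.

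First I would settle existence and uniqueness of the symplectic resolution. By \Cref{prop:blowup} the blow-up $\Bl_{\Sigma}F(Y_2)$ is isomorphic to $\hilb^2(\Sigma)$, and by \Cref{lemma: risoluzioni simplettiche} every symplectic resolution of $F(Y_2)$ factors through this blow-up; hence the symplectic resolutions of $F(Y_2)$ coincide with those of $\hilb^2(\Sigma)$, and \Cref{prop: unicity A2} furnishes uniqueness. I denote the resulting resolution by $\widetilde{F(Y_2)}\simeq\widetilde{\hilb^2(\Sigma)}$. Next I would establish that $\widetilde{F(Y_2)}$ is of $K3^{[2]}$-type together with item (i). As recalled before \Cref{prop: unicity A2}, both $\widetilde{\hilb^2(\Sigma)}$ and $\hilb^2(\widetilde{\Sigma})$ are symplectic resolutions of $\Sym^2(\Sigma)$, so by \cite[Theorem 1.2]{wierzba2002small} they are related by a sequence of Mukai flops. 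Since $\widetilde{\Sigma}$ is a $K3$ surface, $\hilb^2(\widetilde{\Sigma})$ is of $K3^{[2]}$-type, and Mukai flops preserve the deformation type, so $\widetilde{F(Y_2)}$ is too. Being birational IHS manifolds, $\widetilde{\hilb^2(\Sigma)}$ and $\hilb^2(\widetilde{\Sigma})$ share the same second cohomology lattice and Picard lattice, whence by \cite[Section 7.4]{libassi2023git} I obtain $\Pic\left(\widetilde{F(Y_2)}\right)\simeq\langle 6\rangle\oplus D_4(-1)$.

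Finally I would produce the automorphism. As in \Cref{sec: A3A4}, the covering automorphism $\sigma$ induces a natural automorphism on $\hilb^2(\Sigma)$ which lifts through the blow-up to $\widetilde{\sigma^{[2]}}$ on $\widetilde{\hilb^2(\Sigma)}$; transporting it along the isomorphism of the first step yields $\tau_2\in\Aut\left(\widetilde{F(Y_2)}\right)$. Since $\sigma$ multiplies the symplectic form by a primitive cube root of unity, $\tau_2$ is non-symplectic. By \Cref{prop: automorphismA2} the cohomological action of $\widetilde{\sigma^{[2]}}$ agrees with that of the natural automorphism studied in \cite[Section 7.4]{libassi2023git}, whose invariant sublattice is there computed to be $\langle 6\rangle\oplus A_2(-1)$, giving item (ii).

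The main obstacle lies not in this final assembly but in the inputs already granted: controlling the deformation type through the Mukai flops of the second step (so that the birational equivalence with $\hilb^2(\widetilde{\Sigma})$ genuinely identifies both the Picard lattice and the $\tau_2$-action in cohomology), and importing the precise lattice identifications from \cite{libassi2023git}. Once these are in hand the three assertions follow formally, and the argument is parallel to \Cref{teoremone} with $D_4(-1)$ and $A_2(-1)$ replacing the $E_6$ and $E_8$ data of the higher cases.
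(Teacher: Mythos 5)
Your proposal is correct and follows essentially the same route as the paper: the paper's (very terse) proof likewise assembles \Cref{prop: unicity A2} for uniqueness, \Cref{prop: automorphismA2} for the cohomological action of the induced automorphism, and the lattice computations of \cite{libassi2023git}, with the identification of resolutions of $F(Y_2)$ and of $\hilb^2(\Sigma)$ resting on \Cref{prop:blowup} and \Cref{lemma: risoluzioni simplettiche} exactly as you use them. Your write-up merely makes explicit the intermediate steps (the Mukai-flop/birationality argument transferring the $K3^{[2]}$-type and the Picard lattice from $\hilb^2(\widetilde{\Sigma})$) that the paper leaves implicit or delegates to \Cref{teoremone} and the propositions preceding the theorem.
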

	\begin{proof}
		This is a consequence of \Cref{prop: unicity A2}, \Cref{prop: automorphismA2} and the description of $\hilb^2(\widetilde{\Sigma})$, made in \cite[Section 6]{libassi2023git}.
	\end{proof}
	\section{Final considerations}
	In this section we draw some considerations of the results in the context of nodal degenerations of cubic threefolds studied in \cite{BCS} and \cite{libassi2023git}. This interpretation is linked to the issue brought up by \cite[Section 4.2]{boissiere2023fano} in the generic nodal case.\\
	
	Consider a one parameter family $\{C_t\}_{t\neq 0}$ of smooth cubic threefolds degenerating to a nodal cubic threefold $C_0$. Then, consider the family of cyclic cubic fourfolds $\{Y_t\}$ where each $Y_t$ is branched along $C_t$ and the family of their associated Fano varieties of lines $\{F(Y_t)\}$. On each element of the family $F(Y_t)$ there exists a non-symplectic automorphism $\sigma_t$ of order 3 naturally induced by the covering automorphism. Moreover, in \cite[Section 3]{BCS} the authors showed that for $t\neq 0$ the pair $(F(Y_t), \sigma_t)$ endowed with a properly defined marking naturally lives in the moduli space $\mathcal{M}^{\rho, \zeta}_{\langle 6\rangle}$ of $(\rho, \langle 6\rangle)$-polarized IHS manifolds of $K3^{[2]}$-type. The period map $\mathcal{P}^{\rho, \zeta}_{\langle 6\rangle}$ is then surjective on the complement of the nodal hyperplane arrangement $\mathcal{H}$.\\
	
	Suppose now that $C_0$ has one isolated singularity of type $A_i$ for $i=1, \dots, 4$. Then 
	\begin{equation*}
		\lim_{t\to 0} \mathcal{P}^{\rho, \zeta}_{\langle 6\rangle}((F(Y_t), \sigma_t)=\omega_0\in\mathcal{H}.
	\end{equation*}
	In \cite{libassi2023git}, the author proved that the choice of the manifold $\widehat{\Sigma}^{[2]}=\hilb^2(\widehat{\Sigma})$ over the period $\omega_0$ with the automorphism $\hat{\tau_i}^{[2]}$ extends holomorphically the period map $\mathcal{P}^{\rho, \zeta}_{\langle 6\rangle}$ over the subloci $\Delta_3^{A_i}$. This choice was motivated by the analogy with the work of \cite{BCS} but, as proven in \Cref{thm: main SamuelPaola}, it is not the only possible choice. Indeed, the pairs $(\widehat{\Sigma}^{[2]}, \hat{\tau}^{[2]})$ and $(\widehat{F(Y_0)}, \widehat{\sigma_0})$ are equivariantly birational and, thus, if not isomorphic, they are non-separated points in $\mathcal{M}^{\rho_i, \zeta}_{T_i}$. In this case they just correspond to two different choices of Kähler chambers.
	\appendix
	
	\section{Computation of planes through the singular point}\label{sec:planes}
	Here we write the explicit computation of the equations needed to define a generic plane through the singular point $p=(1:0:...:0)$ contained in the cubic fourfold $Y$ of equation\begin{equation*}
		F(x_0, \dots, x_5)= x_0Q(x_1, \dots, x_5)+K(x_1, \dots, x_5)= 0.
	\end{equation*}
	Let $(a_0:a)$ and $(b_0:b)$ with $a,b\in\mathbb{C}^4$ be two points of $Y$. Then the plane $\Pi$ passing through these points have equation
	\begin{equation}
		(\lambda+\mu a_0+\nu b_0:\mu a+\nu b ) \quad (\lambda:\mu:\nu)\in\mathbb{P}^2.
	\end{equation}
	Imposing the condition of being in $Y$ we get:
	\begin{align*}
		&(\lambda+\mu a_0+\nu b_0)Q(\mu a+\nu b)+K(\mu a+\nu b)=\\
		&=(\lambda+\mu a_0+\nu b_0)(\mu^2Q(a)+\mu\nu B(a,b)+\nu^2 Q(b))+\\
		&+\mu^3K(a)+\mu^2\nu K^{2,1}(a,b)+\mu\nu^2 K^{1,2}(a,b)+\nu^3 K(b)=0
	\end{align*}
	using the same notations of \Cref{sec:blowuppa}. In order this to be identically zero all the coefficients of the different homogeneous components have to be trivial. Therefore:
	\begin{align*}
		&Q(a)=0\\
		&B(a,b)=0\\
		&Q(b)=0\\
		&K(a)=0\\
		&K^{2,1}(a,b)=0\\
		&K^{1,2}(a,b)=0\\
		&K(b)=0.
	\end{align*}
	Now fix $b$, the equations imply that $a=(a_1,...,a_5)$ resolves five equations. Recall now that we want the plane $\Pi$ to be non-degenerate so the points $p,$ $ a$ and $b$ are distinct. In particular, $a,b\in\mathbb{C}^4\setminus\{0\}$ so we can suppose e.g. that $a_5=1$. This shows that if $Q$ and $K$ are sufficiently generic the system has not solutions.
	\section{Computations in \Cref{prop:descrizione locale}}\label{sec:computation of descr locale}
	There are many ways to find explicit equations for a generic cubic threefold with one singularity of type $A_i$ for $i=2,3,4$. A very interesting approach is given by Heckel in his Ph.D. thesis \cite[Section 1]{Heckel}. In \textit{loc. cit.}, the author writes an explicit algorithm using the Recognition Principle \cite[Lemma 1]{BruceWall} and the Generalized Morse Lemma \cite[I, Theorem 2.47]{GLS}. This approach has the disadvantage of being too computational heavy in rapport to the results we need in \Cref{prop:descrizione locale}, so we propose here another one.
	
	Keeping the notation of \Cref{prop:descrizione locale} we want to prove the following proposition. 
	\begin{prop}
		Suppose that a cubic threefold $C_i$ defined by the equation:
		\begin{align*}
			F&=x_0q_1(x_2, x_3, x_4)+K(x_1, \dots, x_4)=\\
			&=x_0(x_2h_1(x_3, x_4)+q_1(x_3, x_4))+x_2^2h_3(x_1, x_3, x_4)+ x_2q_3(x_1, x_3, x_4)+\\
			&+k_2(x_1, x_3, x_4)=0.
		\end{align*}
		has one isolated singularity of type $A_i$ for $i=2,3,4$. Then the surface $\Gamma$ locally defined by 
		\begin{equation*}
			\overline{\phi^{3, 0}}=q_1(\widetilde{p_{13}}, p_{14})-k_2(p_{11}, \widetilde{p_{13}}, p_{14})+ p_{15}^3
		\end{equation*} has a singularity in the origin of type \boldmath $T_i$\unboldmath as defined in the following table:
		\begin{center}
			\begin{tabular}{|c|c|} 
				\hline
				$i$ & \boldmath $T_i$\unboldmath \\
				\hline
				$2$ &  $D_4$\\
				\hline
				$3$ &  $E_6$\\ 
				\hline
				$4$ & $E_8$\\ 
				\hline
			\end{tabular}
		\end{center}
	\end{prop}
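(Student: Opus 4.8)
The plan is to reduce the statement to a semi-quasi-homogeneity computation running in parallel with the three cases of \Cref{prop:descrizione locale}, showing that the order of vanishing which defines the type of the singularity of $C_i$ translates directly into the weighted-leading monomial of the equation of $\Gamma$.

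First I would place the isolated singularity of $C_i$ at $p=(1:0:\dots:0)$ and pass to the affine chart $x_0=1$, where the local equation reads $q_1(x_2,x_3,x_4)+K(x_1,x_2,x_3,x_4)$ with $q_1$ the quadratic part, of rank three, and $K$ the cubic part. Applying the Generalized Morse Lemma \cite[I, Theorem 2.47]{GLS} in the three nondegenerate directions $x_2,x_3,x_4$ splits off a Morse summand and leaves a residual function $g(x_1)$ in the single corank direction; by definition $C_i$ has a singularity of type $A_i$ exactly when $\operatorname{ord}_{x_1}g=i+1$. The key point is that the elimination producing $g$ coincides, under the dictionary $x_1\leftrightarrow p_{11}$, $x_3\leftrightarrow p_{13}$, with the elimination producing $\widetilde{p_{13}}$ in \Cref{prop:descrizione locale}: solving $\overline{\phi^{2,1}}=0$ for $p_{13}$ is the analogue of solving the critical equation for $x_3$, and $\widetilde{p_{13}}$ is a power series of order two in $p_{11},p_{14},p_{15}$, just as the critical values of $x_2,x_3,x_4$ are of order two in $x_1$.

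Next I would compute the lowest power of $p_{11}$ surviving in $\overline{\phi^{3,0}}\big|_{p_{14}=p_{15}=0}=q_1(\widetilde{p_{13}},0)-k_2(p_{11},\widetilde{p_{13}},0)$ after the substitution $p_{13}=\widetilde{p_{13}}$, and match it with $\operatorname{ord}_{x_1}g$. The coefficient of $p_{11}^3$ is the coefficient of $x_1^3$ in $k_2$ (equivalently in $K$); if it vanishes, a $p_{11}^4$ term is produced both by $q_1(\widetilde{p_{13}},0)\sim\widetilde{p_{13}}^{\,2}$ and by the monomial of $k_2$ that is quadratic in $p_{11}$; if that too vanishes, a $p_{11}^5$ term is produced at the next order. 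This is precisely the trichotomy i)--iii) of \Cref{prop:descrizione locale}, and it shows that $\operatorname{ord}_{x_1}g=3,4,5$---that is, $C_i$ of type $A_2,A_3,A_4$---forces the $p_{11}$-order of the equation of $\Gamma$ to be $3,4,5$. Since the cyclic structure contributes the summand $p_{15}^3$ and the Morse direction a nondegenerate quadratic term $\sim p_{14}^2$, the weighted-homogeneous principal part of the equation of $\Gamma$ is, after rescaling, $p_{11}^{\,i+1}+p_{14}^2+p_{15}^3$ with weights $(\tfrac1{i+1},\tfrac12,\tfrac13)$; this is the quasi-homogeneous normal form of $D_4,E_6,E_8$ for $i=2,3,4$. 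As this principal part already has an isolated critical point, the equation is semi-quasi-homogeneous, hence right-equivalent to its principal part by \cite{Arnold}, and $(\Gamma,0)$ has exactly the claimed ADE type.

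The main obstacle is the bookkeeping of the second step: one must verify that the $p_{13}$-elimination and the Morse elimination for $C_i$ produce the same leading coefficients, so that the vanishing/non-vanishing conditions really do coincide case by case, and one must check that the substitution $p_{13}=\widetilde{p_{13}}$ creates no monomial of lower weight that could destroy the isolatedness of the principal part (which is exactly what licenses the semi-quasi-homogeneity argument). Pinning down the borderline non-degeneracy---that the $p_{11}^{\,i+1}$ coefficient is genuinely nonzero for a generic $C_i$ of type $A_i$---is where one invokes explicit normal forms for cubic threefolds with $A_2,A_3,A_4$ singularities.
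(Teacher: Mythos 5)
Your proposal is correct and follows essentially the same route as the paper's own proof: both encode the $A_i$ hypothesis as a semiquasihomogeneity/order condition in the corank-one variable $x_1$, transport it through the elimination producing $\widetilde{p_{13}}$ to obtain exactly the trichotomy of \Cref{prop:descrizione locale} on the lowest surviving power of $p_{11}$, and conclude that $\overline{\phi^{3,0}}$ is semiquasihomogeneous of weights $\left(\tfrac{1}{i+1},\tfrac{1}{2},\tfrac{1}{3}\right)$, hence of type $D_4$, $E_6$, $E_8$. The only cosmetic difference is that you phrase the $A_i$ condition via the Generalized Morse Lemma residual function $g(x_1)$, whereas the paper imposes the equivalent semiquasihomogeneity conditions on $F(1,x_1,\dots,x_4)$ directly, and the coefficient bookkeeping you flag as the remaining obstacle is precisely the case-by-case verification the paper carries out.
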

	\begin{proof}
		We consider the different cases:\begin{itemize}
			\item i=2. In this case the polynomial $F(1,x_1,\dots,x_4)$ is SQH of degree $(\frac{1}{3}, \frac{1}{2}, \frac{1}{2}, \frac{1}{2})$ if and only if the coefficient of $x_1^3$ is non-trivial. As noted in the proof of \Cref{prop:descrizione locale} this implies that $\Gamma$ has a $D_4$ singularity in the origin.
			\item i=3. Clearly, the coefficient of $x_1^3$ is trivial otherwise we would be in the case $i=2$. Indeed, in this case the polynomial $F(1,x_1,\dots,x_4)$ is SQH of degree $(\frac{1}{4}, \frac{1}{2}, \frac{1}{2}, \frac{1}{2})$. So, $x_1^2$ must be multiplied by a non-trivial linear form in $x_2, x_3, x_4$. Remember that $\widetilde{p_{13}}$ is a local expression for $p_{13}$ obtained substituting it with a local expression of $q_3(p_{1,1}, p_{13}, p_{14})=p_{13}$. Therefore, in $\overline{\phi^{3, 0}}$ appears, non-trivially, either the term $p_{1,1}^4$ (if in $q_3$ depends from $x_1$) or a term in $p_{1,1}^2p_{1,4}$. In both cases $\overline{\phi^{3, 0}}$ is SQH of degree $(\frac{1}{4}, \frac{1}{2}, \frac{1}{3})$, thus $\Gamma$ has an $E_6$ singularity.
			\item i=4. Clearly we need to exclude the above cases. In this case the polynomial $F(1,x_1,\dots,x_4)$ is SQH of degree $(\frac{1}{5}, \frac{1}{2}, \frac{1}{2}, \frac{1}{2})$ and there exist no quadratic term in $x_1$. In particular, $q_3(p_{1,1}, p_{13}, p_{14})=q_3(p_{13}, p_{14})$. So, $\overline{\phi^{3, 0}}=q_1(\widetilde{p_{13}}(p_{14}), p_{14})-k_2(p_{1,1}, \widetilde{p_{13}}(p_{14}), p_{14})+ p_{15}^3$. Therefore, if $F(1,x_1,\dots,x_4)$ is SQH with weight $(\frac{1}{5})$ with respect to $x_1$ then the same is true for $\overline{\phi^{3, 0}}$, implying that it is SQH of degree $(\frac{1}{5}, \frac{1}{2}, \frac{1}{3})$, thus $\Gamma$ has an $E_8$ singularity.
		\end{itemize}
	\end{proof}
	\section{The action of the automorphism on $\Bl_{\Sigma_i}(F(Y_i))$}\label{sec:auto}
	In this section we explain better the induction of the automorphism on $\Bl_{\Sigma_i}(F(Y_i))$.\\
	
	First, by the universal property of blow-ups there exists only one automorphism $\tau$ on $\Bl_{\Sigma_i}(F(Y_i))$ commuting with the blow-up morphism $\alpha$. Consider the following diagram:
	\begin{equation*}
		\begin{tikzcd}
			\hilb^{2}(\Sigma_i) \ar[r, "\mu"] \ar [dr, "\varphi"]& \Bl_{\Sigma_i}(F(Y_i)) \ar[d, "\alpha"]\\
			& F(Y_i).
		\end{tikzcd}
	\end{equation*}
	Remember that $\mu$ is an isomorphism. Moreover, note that $\mu\circ\sigma^{[2]}\circ\mu^{-1}\in\Aut(\Bl_{\Sigma_i}(F(Y_i)))$, so if we show that this automorphism commutes with $\alpha$ we obtain that it is $\tau$. In order to show this we prove the following lemma.
	\begin{lemma}
		The automorphisms $\sigma^{[2]}$ and $\sigma$ are $\varphi$-equivariant, i.e. $\varphi\circ\sigma^{[2]}=\sigma\circ\varphi$.
	\end{lemma}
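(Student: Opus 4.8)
The plan is to unwind the definition of $\varphi$ and to exploit the fact that the covering automorphism is induced by a projectivity of $\mathbb{P}^5$. First I would recall that $\sigma$ acts on $\mathbb{P}^5$ by $(x_0:x_1:x_2:x_3:x_4:x_5)\mapsto(x_0:x_1:x_2:x_3:x_4:\xi_3 x_5)$; this is a linear automorphism which preserves $Y$ and fixes the singular point $p=(1:0:0:0:0:0)$, and which preserves the hyperplane $H=\{x_0=0\}$. Hence it restricts to $\sigma|_\Sigma$ and induces the natural automorphism $\sigma^{[2]}$ on $\hilb^2(\Sigma)$, where $\sigma^{[2]}(\xi)=\sigma(\xi)$ for every length-two subscheme $\xi$.

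Next I would reduce the equivariance to a pointwise identity. Fix $\xi\in\hilb^2(\Sigma)$. By the definition of $\varphi$ given in the proof of \Cref{Lehn}, $\varphi(\xi)=l_\xi$ is the residual line occurring in the decomposition of the plane cubic $\langle\xi,p\rangle\cap Y$ as the union of the cone over $\xi$ with vertex $p$ and the line $l_\xi$. Since $\sigma^{[2]}(\xi)=\sigma(\xi)$ and $\sigma(\varphi(\xi))=\sigma(l_\xi)$, the identity $\varphi\circ\sigma^{[2]}=\sigma\circ\varphi$ is equivalent to $l_{\sigma(\xi)}=\sigma(l_\xi)$ for all $\xi$.

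To establish this I would use the linearity of $\sigma$ together with $\sigma(p)=p$. The spanned plane is sent to $\sigma(\langle\xi,p\rangle)=\langle\sigma(\xi),\sigma(p)\rangle=\langle\sigma(\xi),p\rangle$, and since $\sigma$ preserves $Y$ one has $\sigma(\langle\xi,p\rangle\cap Y)=\langle\sigma(\xi),p\rangle\cap Y$. Applying $\sigma$ to the decomposition $\langle\xi,p\rangle\cap Y=(\text{cone over }\xi)\cup l_\xi$, and using that $\sigma$ fixes $p$ and therefore carries the cone over $\xi$ to the cone over $\sigma(\xi)$, I obtain $\langle\sigma(\xi),p\rangle\cap Y=(\text{cone over }\sigma(\xi))\cup\sigma(l_\xi)$. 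Comparing with the defining decomposition $\langle\sigma(\xi),p\rangle\cap Y=(\text{cone over }\sigma(\xi))\cup l_{\sigma(\xi)}$ and invoking the uniqueness of the residual line (which holds because $\varphi$ has no indeterminacy points), I conclude $\sigma(l_\xi)=l_{\sigma(\xi)}$, as wanted.

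The argument is essentially formal, so I do not expect a serious obstacle. The only point requiring care is the behaviour of the cone-plus-line decomposition when $\xi$ is non-reduced or supported on $\Sing(\Sigma)$. Even there the equivariance persists, because $\sigma$ acts as a global linear isomorphism on the whole configuration $(\langle\xi,p\rangle,\,Y,\,p)$, so the scheme-theoretic cone over $\xi$ is carried isomorphically to the cone over $\sigma(\xi)$; the uniqueness of the residual component then closes the argument independently of the nature of $\xi$.
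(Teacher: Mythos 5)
Your proof is correct and follows essentially the same route as the paper: both arguments exploit that $\sigma$ is a projectivity of $\mathbb{P}^5$ fixing $p$ and preserving $Y$, so that $\sigma(\langle\xi,p\rangle)=\langle\sigma(\xi),p\rangle$ and the cone-plus-residual-line decomposition is carried over, forcing $l_{\sigma(\xi)}=\sigma(l_\xi)$. The only difference is presentational: the paper splits into the case of two distinct points and the case of a point with a tangent direction (handling the latter via the differential $d_\sigma$, which acts like $\sigma$ by linearity), whereas you treat all length-two subschemes uniformly by taking scheme-theoretic images, which subsumes both cases.
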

	\begin{proof}
		This is a straight-forward computation. We assume that the singular point $p\in Y$ has coordinates $(1:0:...:0)$. Moreover, we assume that $p_1:=(0:P_{11}:...:P_{15})$ and $p_2:=(0:P_{21}:...:P_{25})$ are two distinct points of $\Sigma_i$. Therefore, $\varphi(\sigma^{[2]}(p_1+p_2))=\varphi(\sigma(p_1)+\sigma(p_2))=l_{\sigma(p_1)\sigma(p_2)}$ with $l_{\sigma(p_1)\sigma(p_2)}$ the residual line of the intersection of the plane $\Pi:=\langle p, \sigma(p_1), \sigma(p_2)\rangle$ with $Y_i$. As the action of $\sigma$ is the identity on the first five coordinates we obtain that $\Pi=\sigma(\Pi')$ with $\Pi'=\langle p, p_1, p_2\rangle$, thus $\varphi\circ\sigma^{[2]}(p_1+p_2)=\sigma\circ\varphi(p_1+p_2)$. Now it remains to check the equivariance on the pairs $p_1+p_2$ where $p_1\in\Sigma_i$ and $p_2\in\mathbb{P}(T_{p_1}(\Sigma_i))$. Then, $\sigma^{[2]}(p_1+p_2)=\sigma(p_1)+d_\sigma(p_2)$ with $d_\sigma$ the differential of $\sigma$ at the point $p_1$. As the map $\sigma$ is linear its differential has the same action of $\sigma$ component-wise. Thus the same computation of the previous part show that the automorphisms $\sigma^{[2]}$ and $\sigma$ are $\varphi$-equivariant.
	\end{proof}
	Now it is easy to show that $\mu\circ\sigma^{[2]}\circ\mu^{-1}$ commutes with $\alpha$ as:
	\begin{equation*}
		\alpha\circ\mu\circ\sigma^{[2]}\circ\mu^{-1}=\varphi\circ\sigma^{[2]}\circ\mu^{-1}=\sigma\circ\varphi\circ\mu^{-1}=\sigma\circ\alpha.
	\end{equation*}
	\section{Translation of the equation}\label{sec: translation}
	In this section we write the explicit translation mentioned in the proof of \Cref{prop:blowup}.
	
	Keeping the notation of \Cref{sec:blowuppa}, note that at least one coordinate $\Bar{x}_i\neq 0$, for simplicity suppose $\bar{x}_1=1$. Then the translation to bring $\Bar{x}$ to $(0:1:0:0:0:0)$ is $t_i=x_i-\Bar{x}_ix_1$ when $i\neq 0, 1$ and $t_i=x_i$ in the other cases. So we write the \Cref{eq:cubic} in the following way:
	\begin{equation}\label{eq: uguale a BHS}
		F(t_0, \dots, t_5)= t_0Q(t_2+\Bar{x}_2, t_3+\Bar{x}_3, t_4+\Bar{x}_4)+K(t_1, t_2+\Bar{x}_2, \dots, t_5+\Bar{x}_5).
	\end{equation}
	Now, remember that $(0:1:\Bar{x}_2: \dots: \Bar{x}_5)$ satisfies $Q(\Bar{x}_2, \Bar{x}_3, \Bar{x}_4)=K(1,\Bar{x}_2, \dots, \Bar{x}_5)=0$ as $\Bar{x}\in\Sigma$. So, we can write 
	\begin{equation*}      
		\begin{split}
			Q(t_2+\Bar{x}_2, t_3+\Bar{x}_3, t_4+\Bar{x}_4)&=q_1(t_2+\Bar{x}_2, t_3+\Bar{x}_3, t_4+\Bar{x}_4)=q_1(t_2, t_3, t_4)+2t_1B_1(t_i,\Bar{x}_j) \\ &=q_1(t_2, t_3, t_4)+t_1h_1(t_2, t_3, t_4)
		\end{split}
	\end{equation*}
	and 
	\begin{equation*}      
		\begin{split}
			K(t_1, t_2+\Bar{x}_2, \dots, t_5+\Bar{x}_5)&=t_1^2h_2(t_2+\Bar{x}_2, \dots, t_5+\Bar{x}_5)+ t_1q_2(t_2+\Bar{x}_2, \dots, t_5+\Bar{x}_5)+\\&+k_2(t_2+\Bar{x}_2, \dots, t_5+\Bar{x}_5)=\\
			&=t_1^2(h_2(t_2, \dots, t_5)+k^{1,2}(t_i,\Bar{x}_j))+t_1(q_2(t_2, \dots, t_5)+\\&+2B_2t_1(t_i,\Bar{x}_j)+k^{2,1}(t_i,\Bar{x}_j))+\\&+k_2(t_2, \dots, t_5)=\\
			&=t_1^2\widetilde{h}_2(t_2, \dots, t_5)+t_1\widetilde{q}_2(t_2, \dots, t_5)+k_2(t_2, \dots, t_5).
		\end{split}
	\end{equation*}
	If we substitute the expressions of $Q$ and $K$ in \Cref{eq: uguale a BHS} we see that it has the same form of \cite[Equation (3.2)]{boissiere2023fano}.
	\bibliographystyle{alpha}
	\bibliography{bibliographystableFano} 
\end{document}